\tikzstyle arrowstyle=[scale=1]
\tikzstyle directed=[postaction={decorate,decoration={markings, mark=at position 0.75 with {\arrow[arrowstyle]{stealth}}}}]
\tikzstyle redirected=[postaction={decorate,decoration={markings, mark=at position 0.35 with {\arrow[arrowstyle]{stealth}}}}]
\newtheorem{theorem}{Theorem}[section]
\newtheorem{definition}[theorem]{Definition}
\newtheorem{conjecture}[theorem]{Conjecture}
\newtheorem{lemma}[theorem]{Lemma}
\newtheorem{observation}[theorem]{Observation}
\newtheorem{claim}{Claim}[section]
\newtheorem{proposition}[theorem]{Proposition}
\newcommand{\JCTB}{{\it J. Combin. Theory Ser. B}}
\newcommand{\JGT}{{\it J. Graph Theory}}
\newcommand{\DM}{{\it Discrete Math.}}
\newcommand{\DAM}{{\it Discrete Appl. Math.}}
\newcommand{\SIAMDM}{{\it SIAM J. Discrete Math.}}
\newcommand{\SIAMC}{{\it SIAM J. Comput.}}
\newcommand{\TransAMS}{{\it Trans. Amer. Math. Soc.}}
\newcommand{\ElecCom}{{\it Electronic Journal of Combinatorics}}
\begin{document}
%%%%%%%%%%%%%%%%%%%%%%%%%%%%%%%%%%%%%%%%%%%%%%%%%%%

\title{Signed circuit $6$-covers of signed $K_4$-minor-free graphs}

\author{You Lu\thanks{Research \& Development Institute of Northwest Polytechnical University in Shenzhen, Shenzhen, Guangdong 518063, China; 
School of Mathematics and Statistics, Northwestern Polytechnical University,  Xi'an, Shaanxi 710129, China. Email: luyou@nwpu.edu.cn. Partially supported
 by NSFC (No. 12271438), Guangdong Basic and Applied Basic Research Foundation (No. 2023A1515012340) and Natural Science Foundation of Qinghai Province (No. 2022-ZJ-753).}
 ~~Rong Luo\thanks{Department of Mathematics, West Virginia University, Morgantown, WV 26505, USA.  Email:~ rluo@mail.wvu.edu. Partially supported by a grant from  Simons Foundation (No. 839830)}
 ~~Zhengke Miao\thanks{Research Institute of Mathematical Science and School of Mathematics and Statistics, Jiangsu Normal University, Xuzhou, Jiangsu 221116, China. Email:  zkmiao@jsnu.edu.cn. Partially supported
 by NSFC (No. 11971205)}
 ~~and~Cun-Quan Zhang\thanks{Department of Mathematics, West Virginia University, Morgantown, WV 26505, USA.  Email:~cqzhang@mail.wvu.edu.  Partially supported
 by an  NSF grant DMS-1700218}
}

\date{}

\maketitle

\vspace{-0.9cm}

%%%%%%%%%%%%%%%%%%%%%%%%%%%%%%%%%%%%%%%%%%%%%%%%%	
\begin{abstract}
Bermond, Jackson and Jaeger [{\em J. Combin. Theory Ser. B} 35 (1983): 297-308] proved that every bridgeless ordinary graph $G$ has a circuit $4$-cover and Fan [{\em J. Combin. Theory Ser. B} 54 (1992): 113-122] showed that $G$ has a circuit $6$-cover which together implies that $G$ has a circuit $k$-cover for every even integer $k\ge 4$. The only left case when $k = 2$ is the well-know circuit double cover conjecture. For signed circuit $k$-cover of signed graphs, it is known that for every integer $k\leq 5$, there are infinitely many coverable signed graphs without signed circuit $k$-cover and there are signed eulerian graphs that admit nowhere-zero $2$-flow but don't admit a signed circuit $1$-cover. Fan conjectured that every coverable signed graph has a signed circuit $6$-cover. This conjecture was verified only for signed eulerian graphs and for signed graphs whose bridgeless-blocks are eulerian. In this paper, we prove that this conjecture holds for signed $K_4$-minor-free graphs. The $6$-cover is best possible for signed $K_4$-minor-free graphs. 
\end{abstract}

\vspace{-0.3cm}
%%%%%%%%%%%%%%%%%%%%%%%%%%%%%%%%%%%%%%%%%%%%%%%%%
\section{Introduction}
%%%%%%%%%%%%%%%%%%%%%%%%%%%%%%%%%%%%%%%%%%%%%%%%

Graphs or signed graphs considered in this paper are finite and may have multiple edges or loops. For terminology and notations not defined here we follows \cite{Bondy2008, Diestel2010, KR2016, West1996}.

A \emph{signed graph} is a graph $G$ with a mapping $\sigma:E(G)\mapsto \{1,-1\}$. The mapping $\sigma$, called \emph{signature}, is sometimes implicit in the notation of a signed graph and will be specified when needed.  An edge $e$ is {\em positive} if $\sigma(e)=1$, and otherwise it is {\em negative}.   An {\it ordinary graph}  is a signed graph without negative edges and a  {\it circuit} is a connected $2$-regular graph.
A circuit in a signed graph is {\it balanced} if it has an even number of negative edges and otherwise it is {\it unbalanced}.  A \emph{signed circuit} is either a balanced circuit  or a {\em barbell}, the union of two unbalanced circuits and a (possibly trivial) path (called the {\em barbell-path}) that meets the circuits only at ends. A barbell is called a  {\em short barbell} if its barbell-path is trivial, and a {\em long barbell} otherwise. The edges of a signed circuit in a signed graph correspond to a minimal dependent set in the signed graphic matroid (see \cite{Zaslavsky1982}).

     Let $G$ be a signed graph. A family $\mathcal{F}$ of signed circuits of $G$ is called a {\it signed circuit cover} of $G$ if every edge is contained in some member of $\mathcal{F}$ and is called a {\it signed circuit $k$-cover} if each edge is contained in precisely $k$ members of $\mathcal{F}$. A signed graph  is {\em coverable} if it has  a signed circuit cover. %Signed circuit cover and flows are closely related. It is known that a signed graph $G$ is coverable  if and only if it admits a nowhere-zero $k$-flow for some positive integer $k\geq 2$. 
   Given a coverable signed graph $G$,  the minimum length of a signed circuit cover of  $G$ is denoted by $SSC(G)$. %($SSC(G) = \infty$ if $G$ is  not coverable.)

Note that there is no unbalanced circuit and thus no barbell in an ordinary graph.  The circuit cover of ordinary graphs  is closely related to some mainstream areas in graph theory,
  such as, Tutte's integer flow theory \cite{Alon1985SIAM, Bermond1983JCTB, FanJGT1994, Jackson1990, Jamshy1987JCTB, Macajova2011JGT, Zhang1990JGT},
Fulkerson conjecture
\cite{FanJCTB1994}, snarks  and graph minors
\cite{Alspach1994, Jackson1994}. Thus the circuit cover of ordinary graphs has been studied extensively.

It is proved by
  Bermond,   Jackson and   Jaeger \cite{Bermond1983JCTB}
that every ordinary graph admitting a nowhere-zero $4$-flow has
$SCC(G) \leq \frac{4|E|}{3}$.
By applying Seymour's $6$-flow theorem
\cite{Seymour1981}
 or Jaeger's $8$-flow theorem
\cite{Jaeger1979},
  Alon and Tarsi
\cite{Alon1985SIAM},
  and Bermond, Jackson and Jaeger \cite{Bermond1983JCTB} proved that
every bridgeless ordinary graph $G$
 has $SCC(G)\leq \frac{25|E|}{15}$.
One of the most famous open problems in this area was proposed by
Alon and   Tarsi
\cite{Alon1985SIAM}
that
{\em every bridgeless ordinary graph $G$
 has $SCC(G)\leq \frac{21|E|}{15}$}.

Bermond, Jackson and Jaeger \cite{Bermond1983JCTB} proved that every bridgeless ordinary graph $G$ has a circuit $4$-cover and Fan \cite{fan-6cover} showed that  $G$ has a circuit $6$-cover which together implies that  $G$ has a circuit $k$-cover for every even integer $k\geq 4$. The only left case when $k=2$ is the well-known circuit double cover conjecture.

For signed graphs, M\'a\v{c}ajov\'a, Raspaud,  Rollov\'a and \v{S}koviera
\cite{Macajova2015JGT}  presented the first upper bound of $SSC(G)$. They showed that $SSC(G) \leq 11|E(G)|$ if $G$ is coverable and  the upper bound was improved by Lu et al. \cite{LCLZJCTB} to $\frac{14}{3}|E(G)|$.  More improvements were obtained  later in \cite{CF2018,  KLMRJGT, Macajova2019SIAM, WYJGT, WYSIAM}.

For $k$-cover of signed graphs, Fan \cite{Fan2018} showed that for every integer $k\leq 5$, there are infinitely many  coverable signed graphs that have no signed circuit $k$-cover and he proposed the following conjecture.

\begin{conjecture} (Fan \cite{Fan2018})
\label{6-cover conj}
Every coverable signed graph has a signed circuit $6$-cover.
\end{conjecture}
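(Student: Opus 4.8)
The plan is to prove the statement through signed-graph flow theory. I would first invoke the standard characterization that a signed graph is coverable if and only if it is \emph{flow-admissible} (admits a nowhere-zero integer flow), equivalently that every edge lies on some signed circuit; so throughout one may assume the given coverable signed graph $G$ is flow-admissible. Writing $\chi_C\in\{0,1\}^{E(G)}$ for the indicator vector of a signed circuit $C$, a signed circuit $6$-cover is exactly a representation $6\cdot\mathbf 1 = \sum_{C} m_C\,\chi_C$ with $m_C\in\mathbb Z_{\ge 0}$, so the goal is to place the constant vector $6\cdot\mathbf 1$ in the nonnegative \emph{integer} cone generated by the $\chi_C$. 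The first step is the rational relaxation: flow-admissibility yields a nowhere-zero flow, which decomposes into signed-circuit flows (each a balanced circuit or a barbell carrying unit flow); a short flow-scaling and duality argument upgrades this to a fractional exact cover, so that $\mathbf 1$---hence $6\cdot\mathbf 1$---already lies in the rational cone. The entire content is therefore to achieve \emph{integrality} at the exact multiplicity six.

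For the integral exact cover I see two complementary routes. The first mirrors Fan's ordinary $6$-cover, which splits a nowhere-zero $6$-flow along $6=2\cdot 3$ into a $\mathbb Z_2$-component and a $\mathbb Z_3$-component and recombines their circuit decompositions so that every edge receives multiplicity exactly six. The signed analog would assemble the cover from (i) a parity part built from a $\mathbb Z_2$-valued flow, contributing an even multiplicity on each edge through balanced circuits and short barbells, and (ii) a ternary part from a $\mathbb Z_3$-flow whose support $\supp$ decomposes into signed circuits supplying the complementary multiplicity. The second route exploits the cases already settled in the literature: using a nowhere-zero flow one reduces a general flow-admissible $G$, across its $1$- and $2$-cuts, to configurations whose bridgeless-blocks are eulerian---where a signed circuit $6$-cover is known---and then glues the block covers, arranging that each shared cut edge is saturated to the same total on both sides. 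Bridges demand care: a bridge survives only inside a barbell-path, so both of its sides must carry unbalanced circuits, which are then paired into barbells covering the bridge the requisite six times.

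The hard part---and the reason the statement remains a conjecture in full generality---is twofold. First, the flow input is not free: producing a uniformly low-valued nowhere-zero flow for every flow-admissible signed graph is essentially Bouchet's (still open) flow conjecture, so an unconditional proof must either manufacture the needed flow en route or bypass it by structural decomposition. Second, and more fundamentally, hitting the multiplicity \emph{exactly} six is far more rigid for signed circuits than for ordinary circuits: a barbell shares its barbell-path edges between two unbalanced circuits, so coverage along barbell-paths does not add up the way circuit coverage does, and balancing these contributions against those of balanced circuits to land on a uniform six is the crux. Controlling this integrality and the barbell-path bookkeeping simultaneously, with no hypothesis on the underlying graph, is the principal obstacle; it is exactly where extra structure---for instance forbidding a $K_4$-minor, which makes all the relevant flows and signed-circuit decompositions explicit and inductively combinable across $2$-cuts---renders the program tractable, and the absence of such structure in the general case is what keeps the conjecture open.
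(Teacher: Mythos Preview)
The statement you are attempting is a \emph{conjecture}, not a theorem; the paper does not prove it. The paper only verifies the special case of signed $K_4$-minor-free graphs (Theorem~\ref{TH: main}), and explicitly states that Conjecture~\ref{6-cover conj} is otherwise known only for signed eulerian graphs and for signed graphs whose bridgeless-blocks are eulerian. Your write-up is not a proof but a discussion of possible strategies, and indeed you concede in your final paragraph that the plan does not close.

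Concretely, both of your routes have genuine gaps. Route one (``signed analogue of Fan's $6=2\cdot 3$ decomposition'') presupposes a nowhere-zero $6$-flow on an arbitrary flow-admissible signed graph; that is Bouchet's conjecture, still open, so you cannot invoke it unconditionally---and you say so yourself. Even granting a bounded flow, the step ``recombine their circuit decompositions so that every edge receives multiplicity exactly six'' is asserted, not argued: in the ordinary case this works because the two parity classes of a $\mathbb Z_2$-flow and the three value classes of a $\mathbb Z_3$-flow interact cleanly, but for signed graphs a barbell is not the support of a $\{0,\pm1\}$-flow in the same way, and your own ``barbell-path bookkeeping'' remark identifies exactly why the recombination does not go through. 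Route two (``reduce across $1$- and $2$-cuts to eulerian bridgeless-blocks'') is simply false as a reduction: there is no known operation that turns an arbitrary coverable signed graph into one whose bridgeless-blocks are eulerian while preserving the existence of a signed circuit $6$-cover, and you offer none.

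For contrast, the paper's actual argument for the $K_4$-minor-free case is a minimal-counterexample analysis that exploits the series--parallel structure heavily: it introduces auxiliary $\Psi_{xy}(t)$-covers (mixtures of paths, tadpoles and signed circuits covering a two-terminal piece exactly six times), proves reduction lemmas that glue such covers across series and parallel connections, and then shows via a sequence of structural claims that a smallest counterexample cannot exist. None of this generalizes in any obvious way beyond $K_4$-minor-free graphs, which is consistent with the conjecture being open.
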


The conjecture was verified for signed eulerian graphs in \cite{BCF2019} and for signed graphs whose bridgeless-blocks are eulerian in \cite{CF2021}.

A graph $H$ is a {\em minor} of a graph $G$ if a graph isomorphic to $H$ can be obtained from $G$
by edge contractions, edge deletions and vertex deletions; if not, $G$ is {\em $H$-minor-free}.   The class of $K_4$-minor-free graphs, which includes all series-parallel graphs and outerplanar graphs, is a very important family of graph class and has been studied  by many researchers for various graph theory problems (for example see \cite{MarkJGT, DKT2010}). In this paper we study the signed circuit $k$-cover and confirm Conjecture~\ref{6-cover conj} for signed $K_4$-minor-free graphs and confirm Conjecture~\ref{6-cover conj} for this family of signed graphs.

\begin{theorem}\label{TH: main}
Every coverable signed $K_4$-minor-free graph has a signed circuit $6$-cover.
\end{theorem}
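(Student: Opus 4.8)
The plan is to proceed by induction on the number of edges of the signed $K_4$-minor-free graph $G$, exploiting the recursive structure of $K_4$-minor-free graphs: every such graph can be built from single edges by (i) taking disjoint unions, (ii) identifying two vertices, and (iii) adding parallel edges, or equivalently every $2$-connected $K_4$-minor-free graph has an ear-decomposition / series-parallel structure in which there is an edge or an internal path that can be ``reduced.'' Before starting the induction I would reduce to the $2$-connected case: since a signed circuit never uses a bridge, the coverable hypothesis forces every bridge to lie in no circuit, hence $G$ has no bridges on the part we must cover; and a signed circuit cover of $G$ is obtained by pasting together signed circuit covers of its blocks, so it suffices to treat each bridgeless block, which is a $2$-connected $K_4$-minor-free graph (here one must be slightly careful because ``balanced'' versus ``unbalanced'' status of the block, and the parity of negative edges, interacts with switching, so I would first switch within each block to put the signature in a normal form with at most one negative edge if the block is balanced, and with a minimal set of negative edges otherwise).

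The core of the argument would then be a structural case analysis on a $2$-connected signed $K_4$-minor-free graph $G$. Such a graph has a vertex of degree $2$ or a pair of parallel edges (this is the standard fact that series-parallel graphs always admit a series or parallel reduction). \textbf{Parallel reduction:} if $e_1,e_2$ are parallel edges between $u$ and $v$, I would contract/delete appropriately to get a smaller $K_4$-minor-free graph $G'$, apply induction to obtain a signed $6$-cover of $G'$, and then re-expand, using the digon $\{e_1,e_2\}$ (a balanced circuit if $\sigma(e_1)=\sigma(e_2)$, handled directly) to fix up the covering multiplicities on $e_1$ and $e_2$; the case $\sigma(e_1)\ne\sigma(e_2)$ needs extra care since then $\{e_1,e_2\}$ is an unbalanced ``theta-like'' configuration and one must instead route barbells through it. \textbf{Series reduction:} if $v$ has degree $2$ with neighbors via edges $e_1,e_2$, suppress $v$ to get $G'$ with a single edge $e$ replacing the path; apply induction; then each signed circuit of the $6$-cover of $G'$ through $e$ lifts to one through $e_1,e_2$, and each edge keeps multiplicity $6$. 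The delicate point throughout is \emph{sign bookkeeping}: when we suppress a path its ``net sign'' must be preserved, and when re-expanding we must make sure barbell-paths still connect two genuinely unbalanced circuits; I would track this by always working with a fixed reference spanning tree and recording the signature as the set of fundamental cycles that are unbalanced.

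The genuinely new work, and the step I expect to be the main obstacle, is the \emph{base of the induction and the small unavoidable configurations}: one must exhibit, by hand, explicit signed circuit $6$-covers of the minimal coverable $2$-connected signed $K_4$-minor-free graphs — essentially signed cycles with one or two negative edges, ``theta graphs'' with various sign patterns, and the small barbell-like graphs — and verify that coverability (no balanced-circuit-free, i.e.\ no bridge and no ``isolated'' negative edge obstruction as in Fan's examples) is exactly the condition under which these covers exist. Producing these by $6$ rather than by a larger even number is where the bound $6$ is forced and where the examples showing $6$ is best possible (mentioned in the abstract) live; I would build a short catalogue of gadget covers — e.g.\ a single unbalanced circuit $C$ is covered by taking three copies of the barbell $C\cup C$ (which covers $C$ six times and the barbell-path zero times, so the barbell-path must be chosen as a trivial path, i.e.\ two unbalanced circuits sharing a vertex), and more generally pairing up unbalanced circuits in the graph via a ``$T$-join''-style parity argument so that every negative edge sits on an even contribution — and then show each reduction step preserves the existence of such a catalogued cover.

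A cleaner alternative I would keep in reserve, in case the direct induction gets bogged down in sign cases, is to use a flow/homology reformulation: a signed circuit $6$-cover corresponds to an assignment, to each signed circuit, of a nonnegative integer multiplicity whose ``indicator sum'' is the constant $6$ on edges, i.e.\ an integer point in a polytope defined by the signed-circuit incidence system; for $K_4$-minor-free graphs this incidence structure is simple enough (the signed-graphic matroid is close to graphic) that one can hope to write $6\cdot\mathbf{1}$ explicitly as a nonnegative integer combination by decomposing along the series-parallel tree, with the number $6$ arising as $\mathrm{lcm}(2,3)$ exactly as in the ordinary case (Fan's $6$-cover uses a $4$-cover from a nowhere-zero $4$-flow plus a $3$-cover/parity trick, and the signed analogue should combine a ``$2$-cover by balanced circuits after switching'' on the balanced part with a ``$3$-fold barbell cover'' pairing the unbalanced circuits). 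Either way, the crux is the interaction between the parity constraint coming from unbalanced circuits and the series-parallel surgery, and that is where I would spend most of the effort.
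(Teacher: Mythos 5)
Your overall strategy (series--parallel surgery plus a catalogue of small gadgets) is the right one and is, in spirit, what the paper does; but two of your reduction steps contain genuine errors, and the step you yourself flag as ``the main obstacle'' is missing the key idea that makes the induction close. First, the reduction to $2$-connected blocks is wrong as stated: it is \emph{not} true that a signed circuit never uses a bridge. A long barbell consists of two unbalanced circuits joined by a path, and that barbell-path can perfectly well traverse a bridge or a cut vertex; indeed coverable signed graphs may have bridges (the coverability criterion is that $\epsilon(G)\neq 1$ and no cut edge separates off a balanced component), and those bridges must be covered six times by barbell-paths. Consequently you cannot paste together covers of the blocks: an unbalanced block with negativeness $1$ is not itself coverable, yet it sits inside a coverable $G$ and its negative edge must be reached by barbells whose other unbalanced circuit lies in a \emph{different} block. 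The paper handles exactly this via a classification of the pieces with negativeness $1$ (the graphs $R_0,\dots,R_5$) and a lemma that strings ``tadpoles'' from consecutive parts of a series connection into barbells. Relatedly, your proposed base-case cover of a single unbalanced circuit $C$ by ``three copies of the barbell $C\cup C$'' is not a signed circuit at all --- a barbell needs two distinct unbalanced circuits --- and a lone unbalanced circuit is in fact not coverable.

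Second, and more fundamentally, the inductive hypothesis ``$G'$ has a signed circuit $6$-cover'' is too weak to re-expand across a parallel or series reduction. After deleting one of two parallel edges (or replacing a piece by a smaller gadget), the six circuits passing through the surviving edge may have the wrong \emph{signs} and the wrong \emph{shapes} to be rerouted through the piece you removed: a circuit of $G'$ that enters and leaves the gadget must be replaced by a path of $H$ of the matching sign, and a circuit that enters the gadget only through one terminal must be replaced by a tadpole at that terminal. What is needed --- and what your sketch never formulates --- is a composable interface: every two-terminal piece $H(x,y)$ must admit, for the appropriate $t\in[0,3]$, a $6$-cover consisting of $t$ positive $xy$-paths, $t$ negative $xy$-paths, $6-2t$ tadpoles at the terminals, and signed circuits; a short counting argument on any $6$-cover of the replacement gadget shows these are exactly the demands imposed from outside. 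Without proving that each piece supplies such structured covers (and handling the pieces that cannot, which is where the hard case analysis lives), the ``re-expand'' step fails. Note also that deletion can destroy coverability outright (dropping $\epsilon$ to $1$ or creating a balanced component behind a cut edge), so one cannot even invoke induction on an arbitrary $G'$; the minimal-counterexample argument in the paper is organized precisely to choose reductions that preserve coverability.
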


Note that every coverable signed graph $G$ with four distinct  degree $3$ vertices $x_1, x_2, y_1, y_2$ has no signed circuit $k$-cover for any $k\in [1,5]$ if $G[\{x_1, x_2\}]$ is a balanced $2$-circuit and $G[\{y_1, y_2\}]$ is an unbalanced $2$-circuit. Thus the $6$-cover in Theorem \ref{TH: main} is tight. 

Before proceeding, it is worth pointing out that the problems of flow and signed circuit cover in signed graphs are significantly more challenging than their counterparts in ordinary graphs.
For instance, while ordinary Eulerian graphs trivially allow for a nowhere-zero $2$-flow and a $1$-cover, signed Eulerian graphs can have flow values of $2$, $3$, or even $4$, as shown in \cite{Macajova2019SIAM}. Additionally, there are signed Eulerian graphs that  admit nowhere-zero $2$-flow but don't have  a $1$-cover, as demonstrated in \cite{BCF2019}. Unlike ordinary graphs, coverable signed graphs may have bridges. The intricate structures of signed graphs, including barbells, bridges, and negative loops, contribute to their heightened complexity compared to ordinary graphs. Consequently, tackling the flow and signed circuit cover problems in signed graphs requires extensive machinery and specialized approaches.

This paper is organized as follows. In Section \ref{Notation and terminology}, we introduce  more notations and terminology.  Some simple cases and reduction lemmas needed in the proof of Theorem \ref{TH: main} are presented in Section \ref{Properties}. In Section \ref{Proof}, we prove Theorem \ref{TH: main} by contradiction.

\vspace{-0.3cm}

%%%%%%%%%%%%%%%%%%%%%%%%%%%%%%%%%%%%%%%%%%%%%%%%%
\section{Preliminaries}
\label{Notation and terminology}

%%%%%%%%%%%%%%%%%%%%%%%%%%%%%%%%%%%%%%%%%%%
%\subsection{Signed graph}

%%%%%%%%%%%%%%%%%%%%%%%%%%%%%%%%%%%%%%%

Let $G$ be a graph. A vertex $x$ is called a {\em cut vertex} of $G$ if $G-x$ has more components than $G$. A graph is {\em $2$-connected} if it is connected and has no cut vertex. Let $L_x$ represent a loop at $x$ and $L(G)$ be the set of all loops of $G$. Let $N_G(x)$ and $d_G(x)$ denote the {\em neighborhood} and the {\em degree} of $x$ in $G$, respectively, where each loop at $x$ contributes $2$ to $d_G(x)$. A {\em $d$-vertex} is a vertex with degree $d$. %Let $V_d(G)$ be the set of $d$-vertices in $G$. 
For two subsets $X, Y\subseteq V(G)$ (not necessarily disjoint), denote by $E_G[X,Y]$ the set of edges of $G$ with one end in $X$ and the other end in $Y$.
 A path with ends $x$ and $y$ is called an {\em $xy$-path}. A cycle of length $k$ is called a {\em $k$-cycle}.

Let $G$ be a signed graph. For an edge subset or subgraph $S$ of $G$, denote the set of all negative edges of $S$ by $E_N(S)$ and define the {\em sign} of $S$ to be $\sigma(S)=\Pi_{e\in S} \sigma(e)$.  A path $P$ in $G$ is {\em positive} if $\sigma(P)=1$, and {\em negative} otherwise.  The path $P$ is called a {\em subdivided edge} of $G$ if every internal vertex of $P$ is a $2$-vertex of $G$. The {\em suppressed graph} of $G$, denoted by $\overline{G}$, is the signed graph obtained from $G$ by replacing each maximal subdivided edge $P$ with a single edge $e$ and assigning $\sigma(e)=\sigma(P)$.

Given a signed graph $G$, {\em switching} at a vertex $x$ is the inversion of the signs of all edges incident with $x$.  A signed graph $G'$ is said to be {\em equivalent} to $G$ if $G'$ can be obtained from $G$ via a sequence of switchings  and is denoted by $G'\sim G$. Define the {\em negativeness} of $G$ by $\epsilon(G)=\min\{|E_N(G')| : G'\sim G\}$.  
A signed graph is {\em balanced} if its negativeness is $0$ and otherwise {\em unbalanced}. That is, a balanced signed graph is equivalent to an {\em all-positive} signed graph, i.e. an ordinary graph. It is easy to see that a signed graph is balanced if and only if all of its circuits are balanced.

%\subsection{Signed subgraph covers}

  A {\em tadpole} at $x$ is the union of an $xy$-path $P$ and an unbalanced circuit $C$ with $V(P) \cap V(C) = \{y\}$. The vertex $x$ is called a {\em tail} and the path $P$ is called a {\em tadpole-path}.

%%%%%%%%%%%%%%%%%%%%%%%%%%%%%%%%%%%%%%%%%%%%%%%%%%%%%%%%%%%%%%%%%%%%%%%%%%%%%%%%%%%%%%%%%%%%%%%%%%%%%%%%%%%%%%%%%%%%%%%

%%%%%%%%%%%%%%%%%%%%%%%%%%%%%%%%%%%%
\begin{definition}\label{def: 6-cover}
Let $\mathcal{F}$ be a family of signed subgraphs of a signed graph $G$. Let $t\in [0, 3]$ be an integer and $x, y$ be two distinct vertices of $G$.

\begin{itemize}
\item[\rm (1)] For each $e\in E(G)$, $\mathcal{F}(e)$ denotes the number of  members in $\mathcal{F}$ containing $e$.

\item[\rm (2)] For an edge subset or a subgraph $S$ of $G$, $\mathcal{F}$ is a \emph{signed subgraph $k$-cover} of $S$ if $\mathcal{F}(e)=k$ for each edge $e$ in $S$. In particular, $\mathcal{F}$ is a \emph{signed circuit $k$-cover} of $G$ if every member of $\mathcal{F}$ is a signed circuit.

\item[\rm (3)]   
A {\em $\Psi_{xy}(t)$-cover} is a signed subgraph $6$-cover that consists of $t$ positive $xy$-paths, $t$ negative $xy$-paths, $t$ tadpoles at $x$, $6-2t$ tadpoles at $y$, and some signed circuits.

\item[\rm (4)]   
Let $xy$ be an edge. A {\em $\Psi_{xy}^*(2)$-cover} is a $\Psi_{xy}(2)$-cover such that  for each $u\in \{x, y\}$, one tadpole at $u$  doesn't contain the  vertex in  $\{x, y\}\setminus \{u\}$, and the tadpole-path of the other tadpole at $u$ contains the edge $xy$.

\end{itemize} 

\end{definition}

%%%%%%%%%%%%%%%%%%%%%%%%%%%%%%%%%%%%%%%

Signed circuit cover and flows are closely related. It is known that a signed graph $G$ is coverable  if and only if it admits a nowhere-zero $k$-flow for some integer $k\geq 2$. 
Refining the results in \cite{Bouchet1983}, we have the following characterization.

\begin{proposition}\label{prop: coverable}
A connected signed graph $G$ is coverable if and only if $\epsilon(G)\neq 1$ and there is no cut edge $b$ such that $G-b$ has a balanced component.
\end{proposition}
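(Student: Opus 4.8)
The plan is to argue straight from the definition: $G$ is coverable precisely when every edge lies in some signed circuit. Write (a) for the condition $\epsilon(G)\neq 1$ and (b) for the condition that no cut edge $b$ has $G-b$ with a balanced component. Since being a balanced or an unbalanced circuit, and hence being a signed circuit, is invariant under switching, coverability is a switching invariant; I may therefore always pass to a signature attaining the negativeness, i.e. with exactly $\epsilon(G)$ negative edges. I will prove the two implications separately: the necessity of (a) and (b) by exhibiting an uncoverable edge whenever one of them fails, and their sufficiency by producing, under both hypotheses, an explicit signed circuit through every edge. (Alternatively, using the equivalence between coverability and the existence of a nowhere-zero $k$-flow recalled just before the statement, the sufficiency can be read off from Bouchet's flow-admissibility results in \cite{Bouchet1983}; the scheme below is the self-contained refinement.)

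For \emph{necessity}, suppose first $\epsilon(G)=1$ and fix a signature with a single negative edge $e$. A balanced circuit through $e$ would contain an odd number (namely $1$) of negative edges, which is impossible; and a barbell is the union of two edge-disjoint unbalanced circuits (with a path), each of which uses an odd, hence at least one, negative edge, so a barbell needs at least two distinct negative edges. Thus no signed circuit contains $e$, and $G$ is not coverable. Next suppose $b$ is a cut edge such that a component $H$ of $G-b$ is balanced. As $b$ is a bridge it lies on no circuit, so any signed circuit through $b$ must be a barbell whose barbell-path contains $b$; deleting $b$ then leaves one of the two unbalanced circuits entirely inside $H$, contradicting the balance of $H$. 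Hence $b$ is uncoverable. This shows coverability forces both (a) and (b).

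For \emph{sufficiency}, assume (a) and (b) and show every edge lies in a signed circuit. If $\epsilon(G)=0$ then $G$ is balanced, so \emph{every} component of $G-b$ is balanced for any cut edge $b$; condition (b) therefore forces $G$ to be bridgeless, and a bridgeless balanced graph is, up to switching, a bridgeless ordinary graph in which every edge lies on a balanced circuit. We may thus assume $\epsilon(G)\geq 2$, so $G$ is unbalanced, and (b) guarantees that for every cut edge $b$ \emph{both} components of $G-b$ are unbalanced, i.e. each contains an unbalanced circuit. I treat an edge $e$ according to whether it is a bridge. If $e=xy$ is a bridge, pick unbalanced circuits $C_x$ and $C_y$ in the two components of $G-e$; joining them by a path that leaves $C_x$ at its last common vertex and enters $C_y$ at its first common vertex yields a barbell whose barbell-path runs through $e$, covering $e$. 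If $e$ is not a bridge it lies on a cycle; if some such cycle is balanced we are done, so assume every cycle through $e$ is unbalanced and fix one such unbalanced cycle $C\ni e$.

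The crux, and the step I expect to be the main obstacle, is this last case: producing a signed circuit through a non-bridge edge $e$ all of whose cycles are unbalanced. Here I argue by contradiction, assuming no signed circuit contains $e$. Then no barbell contains the fixed cycle $C$: in particular there is no unbalanced circuit vertex-disjoint from $C$ (otherwise a connecting path, trimmed to meet each circuit only at an end, would give a long barbell through $e$), and no unbalanced circuit meeting $C$ in exactly one vertex (otherwise a short barbell through $e$). The task is to show that this forced concentration of all unbalanced circuits onto the single cycle $C$ drives the negativeness down to $\epsilon(G)=1$, contradicting (a). The delicate point is ruling out unbalanced circuits that share two or more vertices with $C$ without yielding a barbell-forming configuration; I plan to handle this by fixing an optimal signature on the block containing $e$ and showing that, subject to these restrictions, all negative edges can be switched onto a single edge of $C$, so that the frustration index is exactly $1$. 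Should the direct frustration bookkeeping prove unwieldy, the fallback is to invoke the coverability–flow equivalence together with Bouchet's characterization of flow-admissible signed graphs in \cite{Bouchet1983}, from which this case follows once one checks that (a) and (b) together are exactly flow-admissibility.
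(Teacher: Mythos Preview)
The paper does not actually supply a proof of this proposition: it is stated as a refinement of Bouchet's characterisation and attributed to \cite{Bouchet1983} via the line ``a signed graph $G$ is coverable if and only if it admits a nowhere-zero $k$-flow'' just above it. In other words, the paper's ``proof'' is exactly the fallback you describe in your last sentence.

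Your direct combinatorial argument therefore goes beyond what the paper does. The necessity direction and the bridge/balanced parts of sufficiency are correct and clean. The one genuine gap is the case you yourself isolate: a non-bridge edge $e$ lying only on unbalanced cycles. You state a plan (``show that this concentration forces $\epsilon(G)=1$'') but do not carry it out, and the phrase ``should the bookkeeping prove unwieldy'' signals that you have not. This is precisely the substantive step, so as written the sufficiency proof is incomplete.

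For the record, the gap can be closed without flows. Let $B$ be the block containing $e$. The key lemma is: \emph{in a $2$-connected signed graph, if every cycle through a fixed edge $e$ is balanced then the whole graph is balanced}. (Given any cycle $D\not\ni e$, use $2$-connectivity to find two internally disjoint paths from the ends of $e$ to two vertices $a,b$ of $D$; the two cycles through $e$ obtained by routing along the two $ab$-arcs of $D$ are both balanced, which forces $\sigma(D)=1$.) Applying this to $B$ with the sign of $e$ flipped gives $\epsilon(B)=1$, with $e$ the unique negative edge after switching. Any unbalanced circuit in another block meets $C$ in at most one vertex (distinct blocks share at most a cut vertex), so your short/long-barbell construction already covers $e$; hence every block other than $B$ is balanced and $\epsilon(G)=\epsilon(B)=1$, contradicting (a). That finishes your scheme. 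But as submitted, this lemma and the block argument are missing, so the proposal has a real hole exactly where you flagged it.
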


%%%%%%%%%%%%%%%%%%%%%%%%%%%%%%%%%%%%%%%
\vspace{-0.3cm}
\section{$\Psi_{xy}(t)$-covers of two-terminal signed graphs}
\label{Properties}

%%%%%%%%%%%%%%%%%%%%%%%%%%%%%%%%%%%%%%%%%%%%%%%

For two integers $n_1\leq n_2$, let  $[n_1, n_2]$ denote the set of integers between $n_1$ and $n_2$. A {\em two-terminal signed graph} $H(x, y)$ is a  connected signed nonempty graph $H$ with two specified vertices, a {\em source terminal} $x$ and a {\em target terminal} $y$, where $x=y$ if and only if $H$ is a negative loop. For short, we abbreviate $H(x, y)$ to $H$ if the terminals are understood from the context. 

Let $H_i=H_i(x_i, y_i)$ be a two-terminal signed graph for each $i\in [1,n]$.   The {\em parallel connection} $\mathcal{P}(H_1, \dots, H_n)$ of $H_1, \dots, H_n$ is the two-terminal signed graph  obtained from $H_1\cup \dots \cup H_n$ by identifying $x_1, \dots, x_n$ into a source terminal and identifying $y_1, \dots, y_n$ into a target terminal. 
The {\em series connection} $\mathcal{S}(H_1, \dots, H_n)$ of $H_1, \dots, H_n$ is the two-terminal signed graph with source terminal $x_1$ and target terminal $y_n$ obtained from $H_1\cup \dots \cup H_n$ by identifying $y_{i-1}$ and $x_i$ for each $i\in [2,n]$.
If $G$ is a series connection of $H_1, \dots, H_n$ and $n$ is maximum with this property, then we call every $H_i$ a {\em part} of $G$. Let $\mathcal{B}(G)=\{H_1, \dots, H_n\}$ be the set of all parts of $G$. Obviously, $\mathcal{B}(G)$ can be partitioned into three subsets as follows:
 \begin{flalign*}
\mathcal{B}_0(G)&=\{H_i\in \mathcal{B}(G) : x_i=y_i\},\\
\mathcal{B}_1(G)&=\{H_i\in \mathcal{B}(G) : x_i\neq y_i, |E(H_i)|=1\},\\
\mathcal{B}_2(G)&=\{H_i\in \mathcal{B}(G) : x_i\neq y_i, |E(H_i)|\ge 2\}.
\end{flalign*}
Note that every member of $\mathcal{B}_0(G)$ is a negative loop and every member of $\mathcal{B}_1(G)$ is a positive or negative $K_2$. A series connection is shown in Fig.~\ref{FIG: 6-chain}.
%%%%%%%%%%%%%%%%%%%%%%%%%%%%

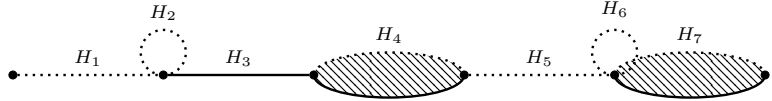
\begin{figure}[ht]

\begin{center}
\begin{tikzpicture}[scale=1]

\draw [dotted, pattern=north west lines,  line width=0.9pt] (1,0) arc (0:180: 1 and 0.3);
\draw [pattern=north west lines, line width=0.9pt] (1,0) arc (360:180: 1 and 0.3);

\draw [dotted, pattern=north west lines,  line width=0.9pt] (5,0) arc (0:180: 1 and 0.3);
\draw [pattern=north west lines, line width=0.9pt] (5,0) arc (360:180: 1 and 0.3);

\draw [dotted, line width=0.9pt] (-3,0) arc (-90:270: 0.3 and 0.3);
\draw [dotted, line width=0.9pt] (3,0) arc (-90:270: 0.3 and 0.3);

%\draw[dotted, line width=0.85] (3,0) .. controls (18:2.4cm) and (15:3.7cm) .. (3,0);

%\node[below] at (-5,0){\scriptsize $x_0$};
%\node[below] at (-3,0){\scriptsize $x_1 (=x_2)$};
%\node[below] at (-1.11,0){\scriptsize $x_3$};
%\node[below] at (1.15,0){\scriptsize $x_4$};
%\node[below] at (2.8,0){\scriptsize $x_5 (=x_6)$};
%\node[below] at (5.15,0){\scriptsize $x_7$};

\node[above] at (-4,0){\scriptsize $H_1$};
\node[above] at (-3,0.6){\scriptsize $H_2$};
\node[above] at (-2,0){\scriptsize $H_3$};
\node[above] at (0,0.3){\scriptsize $H_4$};
\node[above] at (2,0){\scriptsize $H_5$};
\node[above] at (3,0.65){\scriptsize $H_6$};
\node[above] at (4,0.3){\scriptsize $H_7$};

{
\tikzstyle{every node}=[draw, fill=black, minimum size=3pt,inner sep=0pt,shape=circle];

\path (5,0) node (6) {};
\path (3,0) node (5) {};
\path (1,0) node (4) {};
\path (-1,0) node (3) {};
\path (-3,0) node (2) {};
\path (-5,0) node (1) {};

\draw[dotted, line width=0.9pt] (4) -- (5) ;
\draw[line width=0.9pt] (2) -- (3) ;
\draw[dotted, line width=0.9pt] (1) -- (2) ;
}
\end{tikzpicture}
\end{center}
\caption{A series connection $G$ with $\mathcal{B}_0(G)=\{H_2, H_6\}$, $\mathcal{B}_1(G)=\{H_1, H_3, H_5\}$ and $\mathcal{B}_2(G)=\{H_4, H_7\}$. Solid lines are positive; dotted lines are negative.}
\label{FIG: 6-chain}
\end{figure}

%%%%%%%%%%%%%%%%%%%%%%%%%%%%%%%%%%%%%%%%%%%%%%%%%%%%%%%%%

The next lemma will be applied  in the reduction.
%%%%%%%%%%%%%%%%%%%%%%%%%%%%%%%%%%%%%%%%%

\begin{lemma}\label{le: four paths}
Let $H_i=H_i(x_{i-1}, x_i)$ for each $i\in [1,n]$ and $G=\mathcal{S}(H_1, \dots, H_n)$ with $n=|\mathcal{B}(G)|$ and $|\mathcal{B}_2(G)|\ge 1$. Let $\theta^*=(1,1,-1,-1)$ if $|\mathcal{B}_2(G)|=1$ and $\theta^*\in \{(1,1,-1,-1), (-1,-1, -1, -1)\}$ if $|\mathcal{B}_2(G)|\ge 2$.
 If every $H_i\in \mathcal{B}_2(G)$ has a $\Psi_{x_{i-1}x_i}(2)$-cover, then $G$ has a signed subgraph $6$-cover
$$\mathcal{F}_{0}\cup 2\mathcal{B}_0(G) \cup \{P_{1}, P_{2}, P_{3}, P_{4}\}\cup \{T_{1}, T_{2}, T_{3}, T_{4}\},$$
 where 
\begin{itemize}
\item[$\rhd$] $\mathcal{F}_{0}$ is a family of signed circuits; 

\item[$\rhd$]   $P_{1}, P_{2}, P_{3}, P_{4}$ are four $x_0x_n$-paths of $G$ and $(\sigma(P_1), \sigma(P_2),\sigma(P_3), \sigma(P_4))=\theta^*$;

\item[$\rhd$] $T_{1}$ and $T_{2}$ (resp., $T_{3}$ and $T_{4}$) are two tadpoles of $G$ at $x_0$ (resp., $x_n$)  whose unbalanced circuit is in the part in $\mathcal{B}_0(G)\cup \mathcal{B}_2(G)$ with minimum (resp., maximum) subscript.
 \end{itemize}
\end{lemma}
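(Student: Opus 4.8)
The plan is to build the desired $6$-cover by induction on $n=|\mathcal{B}(G)|$, processing the parts of the series connection one at a time from $H_1$ to $H_n$ and threading four $x_0x_n$-paths (two positive, two negative in the first regime, or all four negative in the second) together with four tadpoles through the chain. First I would set up the base case: when $n=1$ we have $G=H_1\in\mathcal{B}_2(G)$, and a $\Psi_{x_0x_1}(2)$-cover is by definition a signed subgraph $6$-cover consisting of $2$ positive $x_0x_1$-paths $P_1,P_2$, $2$ negative $x_0x_1$-paths $P_3,P_4$, $2$ tadpoles $T_1,T_2$ at $x_0$, $4$ tadpoles at $x_1$ (from which I select $T_3,T_4$ and absorb the remaining two into $\mathcal{F}_0$ only if they happen to be signed circuits — more carefully, I should recall that the extra tadpoles at $y$ in Definition~\ref{def: 6-cover}(3) are still tadpoles, so for $n=1$ I take $\theta^*=(1,1,-1,-1)$, keep $T_1,T_2$ at $x_0$, pick any two of the four tadpoles at $x_1$ as $T_3,T_4$, and the remaining two tadpoles at $x_1$ together with the signed circuits already present form $\mathcal{F}_0$; but wait, leftover tadpoles are not circuits). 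This bookkeeping subtlety is exactly where care is needed, so I would instead phrase the induction so that the four tadpoles and four paths are produced and all other members are genuine signed circuits, matching the $\Psi$-cover structure on each $\mathcal{B}_2$ part.

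The core of the argument is the inductive step: given $G=\mathcal{S}(H_1,\dots,H_n)$, write $G=\mathcal{S}(G',H_n)$ where $G'=\mathcal{S}(H_1,\dots,H_{n-1})$, identified at the cut vertex $x_{n-1}$. I split into three cases according to whether $H_n\in\mathcal{B}_0(G)$, $\mathcal{B}_1(G)$, or $\mathcal{B}_2(G)$. If $H_n\in\mathcal{B}_0(G)$ (a negative loop $L_{x_{n-1}}=L_{x_n}$), then $x_n=x_{n-1}$; I take the cover of $G'$ given by induction, add $2L_{x_n}$ (the two copies of the loop, contributing $2$ to $\mathcal{B}_0(G)$), and reroute: the two tadpoles $T_3,T_4$ of $G'$ at $x_{n-1}$ stay (if $H_n$ has the maximum subscript among $\mathcal{B}_0\cup\mathcal{B}_2$ parts, I instead replace them by tadpoles whose unbalanced circuit is this new loop — this is where the "minimum/maximum subscript" clause is paid off), and each of $P_1,\dots,P_4$ already ends at $x_n=x_{n-1}$. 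If $H_n\in\mathcal{B}_1(G)$ (a single edge $e=x_{n-1}x_n$ of sign $s$), I extend each of $P_1,\dots,P_4$ and each of $T_3,T_4$ across $e$; the sign of $e$ flips $\theta^*$ uniformly, so $(1,1,-1,-1)\mapsto(-1,-1,1,1)$ or $(-1,-1,-1,-1)\mapsto(1,1,1,1)$ — and here I must switch at $x_n$ (or equivalently relabel) to restore $\theta^*$ to the prescribed form, using the fact that a $\Psi$-cover is preserved under switching; to cover $e$ exactly $6$ times I note $P_1,\dots,P_4,T_3,T_4$ pass through it, giving $6$. If $H_n\in\mathcal{B}_2(G)$, I combine: take the inductive $6$-cover of $G'$ and a $\Psi_{x_{n-1}x_n}(2)$-cover of $H_n$, then glue at $x_{n-1}$ by concatenating $P_i$ of $G'$ with a path of $H_n$, concatenating $T_3,T_4$ of $G'$ with tadpoles/paths of $H_n$, and matching the $2$ tadpoles of $H_n$ at $x_{n-1}$ against the path-ends; the two regimes of $\theta^*$ let me absorb the sign constraint ($H_n$ contributes both a positive and a negative path, so I can realize either $(1,1,-1,-1)$ or, after choosing negative continuations, $(-1,-1,-1,-1)$ when $|\mathcal{B}_2|\ge2$).

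The main obstacle I anticipate is the gluing at an internal cut vertex when $H_n\in\mathcal{B}_2(G)$: I have a $\Psi_{x_{n-1}x_n}(2)$-cover of $H_n$ whose members at $x_{n-1}$ come in the pattern "$2$ positive paths, $2$ negative paths, $2$ tadpoles at $x_{n-1}$, $4$ tadpoles at $x_n$, plus circuits", and I must splice this against the $G'$-side pattern "$P_1,\dots,P_4,T_3,T_4$ all incident to $x_{n-1}$, plus circuits" so that every edge near $x_{n-1}$ is still covered exactly $6$ times, the result consists of exactly four $x_0x_n$-paths with the right signs and exactly four tadpoles located and shaped as required, and everything else is a signed circuit. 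Concretely, each of the four $G'$-paths $P_i$ concatenates with one of the four $H_n$-paths (sign-matched or sign-complemented to hit $\theta^*$); each of $T_3,T_4$ (tadpoles of $G'$ at $x_{n-1}$) concatenates with one of the two $x_{n-1}$-tadpoles of $H_n$ — but a tadpole-path joined to a tadpole is a barbell, a signed circuit, not a tadpole, so instead $T_3,T_4$ must concatenate with the two of the four $x_n$-tadpoles of $H_n$ whose tadpole-paths reach back to $x_{n-1}$, keeping them tadpoles at $x_n$; meanwhile the $2$ tadpoles of $H_n$ at $x_{n-1}$ pair with two of the $G'$-paths to become barbells absorbed into $\mathcal{F}_0$, forcing me to have spare paths — which is why the statement gives four paths and not two. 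Getting this matching to close up (four paths in, four paths out; two tadpoles in, two tadpoles out; leftover pieces all circuits; edge-multiplicities exactly $6$ at the splice vertex) is the delicate combinatorial core, and I would present it as an explicit table of how each of the $6$ "strands" on the $G'$-side marries a strand on the $H_n$-side, verifying the count $6$ on $E(H_n)$ near $x_{n-1}$ and on the last edge of $G'$ near $x_{n-1}$ separately.
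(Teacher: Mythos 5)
Your inductive framing is workable in spirit (the paper's proof is essentially the non-inductive version of the same idea: it takes $\Psi(2)$-covers of all parts in $\mathcal{B}_2(G)$ simultaneously, concatenates their path-pieces through the $\mathcal{B}_1$-edges into the four long paths, and links the tadpoles of consecutive parts of $\mathcal{B}_0(G)\cup\mathcal{B}_2(G)$ into barbells, keeping only the outermost tadpoles as $T_1,\dots,T_4$), but your write-up has two genuine errors. First, the splice at $x_{n-1}$ when $H_n\in\mathcal{B}_2(G)$ is wrong. A $\Psi_{x_{n-1}x_n}(2)$-cover has exactly two tadpoles at each terminal ($6-2t=2$ for $t=2$), not four at $x_n$. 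The correct matching is: the four $G'$-paths concatenate with the four $H_n$-paths; the two $G'$-tadpoles at $x_{n-1}$ join the two $H_n$-tadpoles at $x_{n-1}$ to form two barbells, which are signed circuits and belong in $\mathcal{F}_0$ (you reject this because the result ``is not a tadpole,'' but it does not need to be one); and the two $H_n$-tadpoles at $x_n$ become the new $T_3,T_4$. Your alternative --- pairing the $H_n$-tadpoles at $x_{n-1}$ with two of the $G'$-paths, and attaching the old $T_3,T_4$ to $x_n$-tadpoles of $H_n$ ``whose tadpole-paths reach back to $x_{n-1}$'' --- either double-uses two $G'$-paths or leaves two $H_n$-paths unconsumed, destroying the multiplicity-$6$ count, and it relies on $x_n$-tadpoles of $H_n$ reaching $x_{n-1}$, which nothing guarantees. (Also, a $G'$-path joined to an $H_n$-tadpole at $x_{n-1}$ is a tadpole at $x_0$, not a barbell.)

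Second, your induction hypothesis is too weak to deliver $\theta^*=(-1,-1,-1,-1)$. If $H_n\in\mathcal{B}_1(G)$ is a negative edge, extending the four paths of $G'$ across it multiplies all four signs by $-1$, so to land on $(-1,-1,-1,-1)$ you would need $G'$ to supply $(1,1,1,1)$, which neither your hypothesis nor the lemma as stated offers. Switching at $x_n$ does not repair this: the target sign vector is computed in the given signature of $G$, and switching at the common endpoint of the four paths flips all four signs simultaneously, converting an all-negative quadruple into an all-positive one --- again outside the allowed set (it only rescues the $(1,1,-1,-1)$ pattern, which is switching-symmetric up to relabeling). You must either strengthen the induction to also allow $(1,1,1,1)$ when $|\mathcal{B}_2(G)|\ge 2$, or do what the paper does: choose, globally, how the positive and negative path-pieces of two distinct $\mathcal{B}_2$-parts are matched to one another, which is precisely where the hypothesis $|\mathcal{B}_2(G)|\ge 2$ is used to realize the all-negative pattern.
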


\begin{proof} Denote $I_j=\{i : H_i\in \mathcal{B}_j(G)\}$ for each $j\in [0,2]$, and for each $i\in I_2$, let
$$
\mathcal{F}_i=\mathcal{C}_i\cup \{P_{i1}, P_{i2}, P_{i3}, P_{i4}\}\cup \{T_{i1}, T_{i2}, T_{i3}, T_{i4}\}, 
$$
be an arbitrary $\Psi_{x_{i-1}x_i}(2)$-cover of $H_i$, where $\mathcal{C}_i$ is a family of signed circuits,  $P_{i1}, P_{i2}$ (resp., $P_{i3}, P_{i4}$) are two positive (resp., negative) $x_{i-1}x_i$-paths, and $T_{i1}, T_{i2}$  (resp., $T_{i3}$,  $T_{i4}$)  are two tadpoles at $x_{i-1}$ (resp., $x_i$).

Let $\mathcal{G}_1=\left(\cup_{i\in I_2}\{P_{i1}, P_{i2}, P_{i3}, P_{i4}\}\right)\cup4\mathcal{B}_1(G)$. Note that every part in $\mathcal{B}_0(G)$ is a negative loop and every part in $\mathcal{B}_1(G)$ is a positive or negative $K_2$. Then $\mathcal{G}_1$ can be expressed as a family $\mathcal{P}$ consisting of $4$ $x_0x_n$-paths $P_1, P_2, P_3, P_4$ such that  $(\sigma(P_1), \sigma(P_2), \sigma(P_3), \sigma(P_4))=\theta^*$ and $\mathcal{G}_1(e)=\mathcal{P}(e)$ for  each $e\in E(G)$.

Let $\mathcal{G}_2=\left(\cup_{i\in I_2}\{T_{i1}, T_{i2}, T_{i3}, T_{i4}\}\right)\cup 4\mathcal{B}_0(G)\cup 2\mathcal{B}_1(G)$. For convenience, let $T_{ij}=H_i$ for each $i\in I_0$ and each $j\in [1,4]$ since $H_i$ is a tadpole at $x_{i-1}$ (=$x_i$), and $I_0\cup I_2=\{i_1, i_2, \dots, i_\ell\}$ with $0\leq i_1\leq i_2\leq \dots \leq i_\ell\leq n$. For each $j\in [1,2]$, we construct a tadpole $T_j$ at $x_0$, a tadpole $T_{j+2}$ at $x_n$, and some barbells as follows:
\begin{flalign*}
T_j &=(x_0x_1\cdots x_{i_1-1})\cup T_{i_1j},\\
T_{j+2} &=T_{i_\ell (j+2)}\cup (x_{i_\ell}\cdots x_{n-1}x_{n}),\\
B_{kj} &=T_{i_k (j+2)} \cup (x_{i_k}x_{i_k+1}\cdots x_{i_{k+1}-1})\cup T_{i_{k+1}j}, \forall\ k\in [1,\ell-1].
\end{flalign*}
Let 
$
\mathcal{T}=\{T_1, T_2, T_3, T_4\} \mbox{ and } \mathcal{C}=\cup_{k=1}^{\ell-1}\{B_{k1}, B_{k2}\}
$. Obviously, $\mathcal{G}_2(e)=(\mathcal{T}\cup \mathcal{C})(e)$ for each $e\in E(G)$. Therefore, $(\cup_{i\in I_2}\mathcal{C}_i)\cup \mathcal{C}\cup 2\mathcal{B}_0(G)\cup \mathcal{P}\cup \mathcal{T}$ is a desired signed subgraph $6$-cover of $G$.
\end{proof}

%%%%%%%%%%%%%%%%%%%%%%%%%%%%%%%%%%%%%%%
By the definition and Lemma \ref{le: four paths}, the following result is straightforward and its proof is omitted.

%%%%%%%%%%%%%%%%%%%%%%%%%%%%%%%%%%%%%%%
\begin{lemma}\label{le: block-path}
Let $H_i=H_i(x_{i-1}, x_i)$ for each $i\in [1,n]$ and $G=\mathcal{S}(H_1, \dots, H_n)$ with $n=|\mathcal{B}(G)|\ge 2$. If $\mathcal{B}_0(G)=\emptyset$, $\mathcal{B}_2(G)\neq \emptyset$ and every $H_i\in \mathcal{B}_{2}(G)$ has a $\Psi_{x_{i-1}x_i}(2)$-cover, then exactly one of the following statements holds.
               \begin{itemize}
               \item[\rm (1)] $G$ has a $\Psi_{x_0x_n}(2)$-cover in which no tadpole at $x_0$ (resp., $x_n$)  contains $x_n$ (resp., $x_0$);
               \item[\rm (2)] $\mathcal{B}_2(G)=\{H_1\}$ and every $\Psi_{x_0x_1}(2)$-cover of $H_1$ contains a tadpole at $x_1$ containing $x_0$;       
               \item[\rm (3)] $\mathcal{B}_2(G)=\{H_n\}$ and every $\Psi_{x_{n-1}x_n}(2)$-cover of $H_n$ contains a tadpole at $x_{n-1}$ containing $x_n$.               
                \end{itemize}
\end{lemma}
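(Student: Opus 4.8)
The plan is to read the desired cover off Lemma~\ref{le: four paths} and then split into cases according to where the parts of $\mathcal{B}_2(G)$ sit along the series connection. I would first apply Lemma~\ref{le: four paths} with $\theta^{*}=(1,1,-1,-1)$, which is a legal choice whatever $|\mathcal{B}_2(G)|$ is. Since $\mathcal{B}_0(G)=\emptyset$ the term $2\mathcal{B}_0(G)$ is empty, so the signed subgraph $6$-cover obtained is $\mathcal{F}_0\cup\{P_1,P_2,P_3,P_4\}\cup\{T_1,T_2,T_3,T_4\}$ with $\mathcal{F}_0$ a family of signed circuits, $P_1,P_2$ positive and $P_3,P_4$ negative $x_0x_n$-paths, $T_1,T_2$ tadpoles at $x_0$, and $T_3,T_4$ tadpoles at $x_n$; by Definition~\ref{def: 6-cover}(3) with $t=2$ (so $6-2t=2$ tadpoles at the target) this is already a $\Psi_{x_0x_n}(2)$-cover of $G$. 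Write $i_1$ (resp.\ $i_\ell$) for the smallest (resp.\ largest) index with the corresponding part in $\mathcal{B}_2(G)$. Two features will be used: since the unbalanced circuit of $T_1$ and of $T_2$ lies in $H_{i_1}$ and a tadpole attached at $x_0$ cannot recross a cut vertex, $T_1,T_2\subseteq H_1\cup\cdots\cup H_{i_1}$, and symmetrically $T_3,T_4\subseteq H_{i_\ell}\cup\cdots\cup H_n$; and, from the explicit construction in the proof of Lemma~\ref{le: four paths}, the tadpoles of $H_{i_1}$ (resp.\ $H_{i_\ell}$) spliced in to form $T_1,T_2$ (resp.\ $T_3,T_4$) are the two tadpoles at $x_{i_1-1}$ (resp.\ at $x_{i_\ell}$) of \emph{any} $\Psi$-cover of $H_{i_1}$ (resp.\ $H_{i_\ell}$) that we prescribe. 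Finally, since $\mathcal{B}_0(G)=\emptyset$ the terminals $x_0,\dots,x_n$ are pairwise distinct and distinct parts share only terminals, so a subgraph inside $H_1\cup\cdots\cup H_j$ with $j<n$ avoids $x_n$, and one inside $H_j\cup\cdots\cup H_n$ with $j>1$ avoids $x_0$.

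Next, the cases. If $|\mathcal{B}_2(G)|\ge 2$, or $\mathcal{B}_2(G)=\{H_i\}$ with $1<i<n$, then $i_1<n$ and $i_\ell>1$, so by the last observation none of $T_1,\dots,T_4$ contains the opposite terminal and the cover above witnesses~(1). If $\mathcal{B}_2(G)=\{H_1\}$ (so $i_1=i_\ell=1$), then $T_1,T_2\subseteq H_1$ avoid $x_n$ automatically (as $n\ge 2$), and only $T_3,T_4$ are in question: each is formed by appending the path $x_1\cdots x_n$ to a tadpole at $x_1$ of whatever $\Psi_{x_0x_1}(2)$-cover of $H_1$ we feed into Lemma~\ref{le: four paths}. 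If some $\Psi_{x_0x_1}(2)$-cover of $H_1$ has both of its tadpoles at $x_1$ avoiding $x_0$, feed it in; then $T_3,T_4$ avoid $x_0$ and the cover witnesses~(1). Otherwise every $\Psi_{x_0x_1}(2)$-cover of $H_1$ has a tadpole at $x_1$ containing $x_0$, which is exactly statement~(2). The case $\mathcal{B}_2(G)=\{H_n\}$ is the mirror image and yields~(1) or~(3). Hence at least one of~(1)--(3) holds.

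It remains to check the three alternatives are mutually exclusive. As $n\ge 2$, (2) and (3) cannot hold together, so it suffices to show~(1) excludes~(2) (and, symmetrically,~(3)). Suppose~(1) and~(2) both hold. Since $\mathcal{B}_2(G)=\{H_1\}$ and $\mathcal{B}_0(G)=\emptyset$, the parts $H_2,\dots,H_n$ are single edges forming an $x_1x_n$-path $\pi$, and a short degree count along $\pi$, using that $x_n$ is a $1$-vertex of $G$, shows that every circuit of $G$ lies inside $H_1$; hence so does every barbell, every balanced circuit, and every tadpole at $x_0$ of $G$. Take a $\Psi_{x_0x_n}(2)$-cover $\mathcal{F}$ of $G$ as in~(1). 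Then each barbell, balanced circuit, and tadpole at $x_0$ in $\mathcal{F}$ lies in $H_1$; each $P_i$ equals $\pi$ together with an $x_0x_1$-path in $H_1$, so the four $H_1$-parts have signs $1,1,-1,-1$ irrespective of $\sigma(\pi)$; and since each edge of $\pi$ is covered six times while $P_1,\dots,P_4$ already use it four times, each of the two tadpoles at $x_n$ must traverse all of $\pi$ and hence meets $H_1$ in a tadpole at $x_1$, which avoids $x_0$ because $\mathcal{F}$ does. Restricting every member of $\mathcal{F}$ to $H_1$ now produces a $\Psi_{x_0x_1}(2)$-cover of $H_1$ (two positive and two negative $x_0x_1$-paths, two tadpoles at $x_0$, two tadpoles at $x_1$, and signed circuits) whose two tadpoles at $x_1$ both avoid $x_0$ --- contradicting~(2). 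The incompatibility of~(1) and~(3) is symmetric.

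The one genuinely delicate step I anticipate is the restriction argument in the last paragraph: verifying that a $\Psi_{x_0x_n}(2)$-cover of $G$ really does cut down, along the cut vertices of $\pi$, to a valid $\Psi_{x_0x_1}(2)$-cover of $H_1$ with exactly the stated tadpole behaviour --- in particular that the two tadpoles at $x_n$ are forced through all of $\pi$ and that no other member of $\mathcal{F}$ meets $\pi$, so that the edge-count on $H_1$ comes out right. The rest is routine unpacking of Lemma~\ref{le: four paths} and Definition~\ref{def: 6-cover}.
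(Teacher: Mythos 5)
Your proof is correct and follows exactly the route the paper intends: the paper omits the proof of this lemma, stating only that it is ``straightforward'' from Definition~\ref{def: 6-cover} and Lemma~\ref{le: four paths}, and your argument is precisely a careful unpacking of that — feeding prescribed $\Psi(2)$-covers of the $\mathcal{B}_2$-parts into the explicit construction of Lemma~\ref{le: four paths} with $\theta^*=(1,1,-1,-1)$ and locating the resulting tadpoles. Your restriction argument for the ``exactly one'' part (cutting a $\Psi_{x_0x_n}(2)$-cover of $G$ along the bridge-path $\pi$ back to a $\Psi_{x_0x_1}(2)$-cover of $H_1$, using that all circuits avoid the bridges so the edge counts on $\pi$ force the two tadpoles at $x_n$ through all of $\pi$) is sound and supplies the one detail the paper leaves entirely implicit.
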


%%%%%%%%%%%%%%%%%%%%%%%%%%%%%%%%%%%%%%%%%%%%%%%%%%%
%%%%%%%%%%%%%%%%%%%%%%%%%%%%%%%%%
The next  lemma is another reduction technique in the proof of the main result.
   \begin{lemma}\label{le: 2-sum-a-b}
 Let $H_1, H_2, H_2'$ be three two-terminal signed graphs with source terminal $x$ and target terminal $y$, where $H_2$ and $H_2'$ satisfy one of the following conditions.
 \begin{itemize} 
 \item[\rm (1)] $H_2$ has a $\Psi_{xy}(t)$-cover for each $t\in [0, 3]$ in which no tadpole at $x$ contains $y$; $H_2'$ is the signed graph $D_1(x, y)$ in Fig. \ref{fig: replace}. 
 
 \item[\rm (2)] $H_2$ has a $\Psi_{xy}(2)$-cover in which no tadpole at $x$ (resp., $y$) contains $y$ (resp., $x$); $H_2'$ is the signed graph $D_2(x, y)$ in Fig. \ref{fig: replace}.
  \end{itemize} 
If $\mathcal{P}(H_1, H_2')$ has a signed circuit $6$-cover, then so does $\mathcal{P}(H_1, H_2)$.   
  \end{lemma}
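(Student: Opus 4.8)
The plan is to use $H_2$ (with its covers having the stated tadpole-avoidance properties) to manufacture, for each edge $e$ of $H_1$ and each of the two gadgets $D_1(x,y)$ or $D_2(x,y)$, the pieces of signed circuits that the $6$-cover of $\mathcal{P}(H_1,H_2')$ forces through $e$. More precisely, I would start with a signed circuit $6$-cover $\mathcal{F}'$ of $\mathcal{P}(H_1,H_2')$ and examine each member $C$ of $\mathcal{F}'$ according to how it meets the small gadget $H_2'$: either $C\subseteq H_1$ (so $C$ is already a signed circuit of $\mathcal{P}(H_1,H_2)$ and is kept unchanged), or $C$ lies entirely inside $H_2'$, or $C$ meets $H_2'$ in a nontrivial $xy$-subpath (for a balanced circuit) or in a union of an $x$-tadpole, a $y$-tadpole and an $xy$-path, or some sub-configuration thereof (for a barbell). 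The key point is to read off, from $\mathcal{F}'$ restricted to $H_2'$, how many times each of these $H_2'$-internal pieces occurs; since each edge of $H_2'$ is covered exactly $6$ times and $D_1,D_2$ are small explicitly drawn graphs, there are only finitely many possibilities, and in each case the collection of $xy$-paths (positive and negative), $x$-tadpoles and $y$-tadpoles that must be glued back equals the profile of either a $\Psi_{xy}(t)$-cover for some $t\in[0,3]$ (case (1), where $D_1$ forces, for each of its edges, a fixed multiset of path/tadpole types) or a $\Psi_{xy}(2)$-cover (case (2), where $D_2$ is tuned so that exactly two positive and two negative $xy$-paths plus the appropriate tadpoles are needed).

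The construction then goes as follows. By hypothesis $H_2$ admits a $\Psi_{xy}(t)$-cover $\mathcal{F}_2$ of the required type; write $\mathcal{F}_2=\mathcal{C}_2\cup\{\text{$t$ positive $xy$-paths}\}\cup\{\text{$t$ negative $xy$-paths}\}\cup\{\text{$t$ tadpoles at }x\}\cup\{6-2t\text{ tadpoles at }y\}\cup(\text{signed circuits})$, with the crucial feature that none of its $x$-tadpoles contains $y$ (and, in case (2), none of its $y$-tadpoles contains $x$). Now for each member $C$ of $\mathcal{F}'$ I splice: if $C$ uses an $xy$-subpath $Q\subseteq H_2'$ of sign $\sigma(Q)$, replace $Q$ by one of the $\mathcal{F}_2$-paths of the same sign to reconstitute a signed circuit of $\mathcal{P}(H_1,H_2)$; if $C$ uses an $x$-tadpole (resp.\ $y$-tadpole) of $H_2'$, replace its $H_2'$-part by an $x$-tadpole (resp.\ $y$-tadpole) of $\mathcal{F}_2$ — here the "no tadpole at $x$ contains $y$" clause guarantees that the resulting object is a genuine tadpole/barbell (its unbalanced circuit stays inside $H_2$ and does not accidentally swallow the other terminal, which would wreck the circuit structure); members $C\subseteq H_1$ and the leftover signed circuits of $\mathcal{F}_2$ inside $H_2$ are carried over directly. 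A bookkeeping check — identical in spirit to the counting in the proof of Lemma~\ref{le: four paths} — shows that every edge of $H_1$ is still covered exactly $6$ times (it was in $\mathcal{F}'$ and we never touched $H_1$), and every edge of $H_2$ is covered exactly $6$ times because the multiset of $\mathcal{F}_2$-pieces we used matches, type by type, the multiset of $H_2'$-pieces appearing in $\mathcal{F}'$, which by definition of $\Psi_{xy}(t)$-cover is exactly a full $6$-cover of $H_2'$'s "interface data."

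I expect the main obstacle to be the case analysis on how the barbells of $\mathcal{F}'$ interact with the gadget $H_2'$: a barbell can enter and leave $H_2'$ along its barbell-path, or have one of its unbalanced circuits inside $H_2'$, or be entirely inside $H_2'$, and one must verify that after the substitution the object is still a valid signed circuit — in particular that no new unbalanced circuit is created outside $H_2$ and that barbell-paths remain internally disjoint from the circuits. This is exactly where the precise shapes of $D_1(x,y)$ and $D_2(x,y)$ in Fig.~\ref{fig: replace} (which I would consult) and the tadpole-avoidance hypotheses on $\mathcal{F}_2$ do the real work: they are designed so that every piece of $\mathcal{F}'$ meeting $H_2'$ has a sign/terminal-incidence profile that is realized by some piece of the available $\Psi_{xy}(t)$-cover of $H_2$, and so that splicing cannot turn a circuit into a non-circuit. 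Once the finite verification is done for each of the (few) configurations forced by $D_1$ and $D_2$, assembling the global $6$-cover of $\mathcal{P}(H_1,H_2)$ is routine.
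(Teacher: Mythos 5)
Your proposal matches the paper's proof: both classify the members of the given $6$-cover of $\mathcal{P}(H_1,H_2')$ by their intersection with the gadget $H_2'$, use the fact that each of its edges is covered exactly six times to deduce that the profile of intersection types is $(t,t,t,6-2t)$ for some $t\in[0,3]$ (i.e.\ exactly the interface of a $\Psi_{xy}(t)$-cover), and then splice in the corresponding positive/negative $xy$-paths and $x$-/$y$-tadpoles of the hypothesized cover of $H_2$, with the tadpole-avoidance clause guaranteeing that the spliced objects remain genuine signed circuits. This is essentially the same argument the paper gives, which carries out the counting explicitly for $D_1$ and notes the $D_2$ case is analogous.
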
 

%%%%%%%%%%%%%%%%%%%%%%%%%%%%%%%%%%%%%%

  %%%%%%%%%%%%%%%%%%%%%%%%%%%%%%%%%%%%%%%%%%%
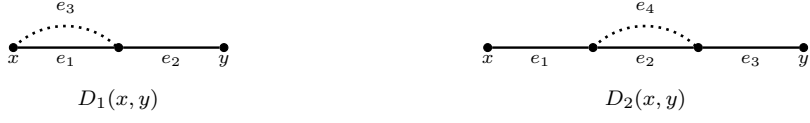
\begin{figure}[ht]
\scriptsize
\captionsetup[subfloat]{labelsep=none,format=plain,labelformat=empty}

%%%%%%%%%%%%%%%%%%%%%%%%%%%%%%%%%%%%%%%%%%%%%%%%%%%%%%%%%%%%%%%%%%
\subfloat[$D_1(x, y)$]{
\begin{minipage}[t]{0.3\textwidth}
\begin{center}
\begin{tikzpicture}[scale=0.7]
{
\tikzstyle{every node}=[draw,fill=black, minimum size=3pt,inner sep=0pt,shape=circle];

\path (2,0) node (3) {};
\path (0,0) node (2) {};
\path (-2,0) node (1) {};

\draw[line width=0.9pt] (2) -- (3) ;
\draw[line width=0.9pt] (1) -- (2) ;
\draw[dotted, line width=1pt] (1) arc (-45:-135:-1.4cm);

}
\node[below] at (1){$x$};
%\node[below] at (2){$z$};
\node[below] at (3){$y$};

\node[below] at (-1,0){$e_1$};
\node[below] at (-1,1){$e_3$};
\node[below] at (1,0){$e_2$};

\end{tikzpicture}
\end{center}
\end{minipage}
}
%%%%%%%%%%%%%%%%%%%%%%%%%%%%%%%%%%%%%%%%%%%%%%%%%%%%%%%%%%%%%%%%%%
%%%%%%%%%%%%%%%%%%%%%%
\hspace{2cm}
%%%%%%%%%%%%%%%%%%%%%%%%%%%%%%%%%%%%%%%%%%%%%%%%%%%%%%%%%%%%%%%%%%
\subfloat[$D_2(x, y)$]{
\begin{minipage}[t]{0.3\textwidth}
\begin{center}
\begin{tikzpicture}[scale=0.7]
{
\tikzstyle{every node}=[draw,fill=black, minimum size=3pt,inner sep=0pt,shape=circle];

\path (3,0) node (4) {};
\path (1,0) node (3) {};
\path (-1,0) node (2) {};
\path (-3,0) node (1) {};

\draw[line width=0.9pt] (3) -- (4) ;
\draw[line width=0.9pt] (2) -- (3) ;
\draw[line width=0.9pt] (1) -- (2) ;
\draw[dotted, line width=1pt] (2) arc (-45:-135:-1.4cm);

}
\node[below] at (1){$x$};
%\node[below] at (2){$z_1$};
%\node[below] at (3){$z_2$};
\node[below] at (4){$y$};

\node[below] at (-2,0){$e_1$};
\node[below] at (2,0){$e_3$};
\node[below] at (0,0){$e_2$};
\node[below] at (0,1){$e_4$};

\end{tikzpicture}
\end{center}
\end{minipage}
}
%%%%%%%%%%%%%%%%%%%%%
%%%%%%%%%%%%%%%%%%%%%%%%%%%%%%%%%%%%%%%%%%
\caption{Two two-terminal signed graphs with terminals $x$ and $y$.}
\label{fig: replace}
\end{figure}

\begin{proof}
Denote $G=\mathcal{P}(H_1, H_2)$ and $G'=\mathcal{P}(H_1, H_2')$. Let $\mathcal{F}$ be a signed circuit $6$-cover of $G'$. 

 We only prove the case when $H_2$ satisfies (1) since the augment for the  other case is very similar.
 
As shown in Fig.~\ref{fig: replace}, it follows from the structure of the signed graph $D_1(x, y)$ that, for any $C\in \mathcal{F}$, 
$$E(C)\cap E(H_2')\in \left\{\emptyset, \{e_1, e_2\}, \{e_2, e_3\}, \{e_1, e_3\}, \{e_1, e_2, e_3\}\right\}.$$
Denote by $\mathcal{F}_1$ (resp., $\mathcal{F}_2$, $\mathcal{F}_3$, $\mathcal{F}_4$) the set of signed circuits $C\in \mathcal{F}$ with 
$E(C)\cap E(H_2')=\{e_1, e_2\}$ (resp., $=\{e_3, e_2\}$, $=\{e_1, e_3\}$, $=\{e_1, e_2, e_3\}$). Since $\mathcal{F}(e_1)=\mathcal{F}(e_2)=\mathcal{F}(e_3)=6$,  
$$
|\mathcal{F}_1|+|\mathcal{F}_3|+|\mathcal{F}_4|=|\mathcal{F}_1|+|\mathcal{F}_2|+|\mathcal{F}_4|=|\mathcal{F}_2|+|\mathcal{F}_3|+|\mathcal{F}_4|=6.
$$
Thus there is an integer $t\in [0, 3]$ such that $|\mathcal{F}_1|=|\mathcal{F}_2|=|\mathcal{F}_3|=t$ and $|\mathcal{F}_4|=6-2t$. Let $\mathcal{F}_i=\{C_{i1}, \dots, C_{it}\}$ for each $i\in [1,3]$ and $\mathcal{F}_4=\{C_{41}, \dots, C_{4(6-2t)}\}$. On the other hand, by assumption, $H_2$ has a $\Psi_{xy}(t)$-cover  
$$
\mathcal{C}_0\cup \{P_{11}, \dots, P_{1t}\}\cup \{P_{21}, \dots, P_{2t}\}\cup \{P_{31}, \dots,P_{3t}\}\cup \{P_{41}, \dots, P_{4(6-2t)}\},
$$
where  no tadpole at $x$  contains $y$, $\mathcal{C}_0$ is a family of signed circuits,  each $P_{1j}$ (resp., $P_{2j}$) is a positive (resp., negative) $xy$-path, and each $P_{3j}$ (resp., $P_{4j}$) is a tadpole at $x$ (resp., $y$).
One can easily  check  that the family 
$$
\mathcal{C}_0\cup \left(\mathcal{F}\setminus (\cup_{i=1}^4\mathcal{F}_i)\right)\cup  \left(\cup_{i=1}^4\cup_{j=1}^{|\mathcal{F}_i|}\{(C_{ij}-E(H_2'))\cup P_{ij}\}\right)$$
is a signed circuit $6$-cover of $G=\mathcal{P}(H_1, H_2)$.
\end{proof}

%%%%%%%%%%%%%%%%%%%%%%%%%%%%%%%%%%%%%%%%%%%%%%%%%%%%%%
%%%%%%%%%%%%%%%%%%%%%%%%%%%%%%%%%%%%%%%%%%%%%

%%%%%%%%%%%%%%%%%%%%%%%%%%%%%%%%%%%%%%%%%%%%%%%%%%%%%%%%
Next we introduce  six small signed graphs  $R_i$s in Fig.~\ref{fig: six graphs}. to which some subgraphs of a smallest counterexample is reduced.
 The following observation is straightforward. 
%%%%%%%%%%%%%%%%%%%%%%%%%%%%%%%%%%%%%%%%%%%
\begin{figure}[htb]
\scriptsize
\captionsetup[subfloat]{labelsep=none,format=plain,labelformat=empty}

%%%%%%%%%%%%%%%%%%%%%%%%%%%%%%%%%%%%%%%%%%%%%%%%%%%%%%%%%%%%%%%%%%
\subfloat[$R_0$]{
\begin{minipage}[t]{0.15\textwidth}
\begin{center}
\begin{tikzpicture}[scale=0.7]
{
\tikzstyle{every node}=[draw,fill=black, minimum size=3pt,inner sep=0pt,shape=circle];

\path (225:1.3cm) node (2) {};
\path (315:1.3cm) node (3) {};

\draw[line width=0.9pt] (2) -- (3) ;
\draw[dotted, line width=1pt] (2) arc (-45:-135:-1.3cm);

}

\node[below] at (2){$x$};
\node[below] at (3){$y$};

\end{tikzpicture}
\end{center}
\end{minipage}
}
%%%%%%%%%%%%%%%%%%%%%%%%%%%%%%%%%%%%%%%%%%%%%%%%%%%%%%%%%%%%%%%%%%
\subfloat[$R_1$]{
\begin{minipage}[t]{0.15\textwidth}
\begin{center}
\begin{tikzpicture}[scale=0.7]
{
\tikzstyle{every node}=[draw,fill=black, minimum size=3pt,inner sep=0pt,shape=circle];

\path (90:0.7cm) node (1) {};
\path (225:1.3cm) node (2) {};
\path (315:1.3cm) node (3) {};

\draw[line width=0.9pt] (1) -- (2)(2) -- (3) (3) -- (1);

%\draw[dotted, line width=1pt] (1) arc (120:180:2.16);
%\draw[dotted, line width=1pt] (3) arc (0:60:2.16);
\draw[dotted, line width=1pt] (90:0.7) arc (-90:270:0.3);
}
%\node[above] at (1){$v_{1}$};
\node[below] at (2){$x$};
\node[below] at (3){$y$};

\end{tikzpicture}
\end{center}
\end{minipage}
}
%%%%%%%%%%%%%%%%%%%%%%%%%%%%%%%%%%%%%%%%%%%%%%%%%%%%%%%%%
%%%%%%%%%%%%%%%%%%%%%%%%%%%%%%%%%%%%%%%%%
%%%%%%%%%%%%%%%%%%%
\subfloat[$R_2$]{
\begin{minipage}[t]{0.15\textwidth}
\begin{center}
\begin{tikzpicture}[scale=0.7]
{
\tikzstyle{every node}=[draw,fill=black, minimum size=3pt,inner sep=0pt,shape=circle];

\path (135:1.3cm) node (4) {};
\path (225:1.3cm) node (3) {};\path (315:1.3cm) node (3) {};

\draw[line width=0.9pt] (4) -- (2)(2) -- (3);
\draw[line width=0.9pt] (3) -- (4);
\
\draw[dotted, line width=1pt] (135:1.3cm) arc (90:0:1.8385);
%\draw[dotted, line width=1pt] (3) arc (330:390:1.8385);
}

\node[below] at (2){$x$};
\node[below] at (3){$y$};

\end{tikzpicture}
\end{center}
\end{minipage}
}
%%%%%%%%%%%%%%%%%%%%%%%%%%%%%%%%%%%%%%%%%%%%%%%%%%%%%%%%%%%%%%%%%%
\subfloat[$R_3$]{
\begin{minipage}[t]{0.15\textwidth}
\begin{center}
\begin{tikzpicture}[scale=0.7]
{
\tikzstyle{every node}=[draw,fill=black, minimum size=3pt,inner sep=0pt,shape=circle];

\path (45:1.3cm) node (4) {};\path (135:1.3cm) node (1) {};
\path (225:1.3cm) node (2) {};\path (315:1.3cm) node (3) {};

\draw[line width=0.9pt] (1) -- (2)(2) -- (3) (1) -- (4);
\draw[line width=0.9pt] (3) -- (4);
\
\draw[dotted, line width=1pt] (1) arc (120:60:1.8385);
%\draw[dotted, line width=1pt] (3) arc (330:390:1.8385);
}

\node[below] at (2){$x$};
\node[below] at (3){$y$};

\end{tikzpicture}
\end{center}
\end{minipage}
}
%%%%%%%%%%%%%%%%%%%%%%%%%%%
%%%%%%%%%%%%%%%%%%%%%%%%
\subfloat[$R_4$]{
\begin{minipage}[t]{0.15\textwidth}
\begin{center}
\begin{tikzpicture}[scale=0.7]
{
\tikzstyle{every node}=[draw,fill=black, minimum size=3pt,inner sep=0pt,shape=circle];

\path (45:1.3cm) node (4) {};\path (135:1.3cm) node (1) {};
\path (225:1.3cm) node (2) {};\path (315:1.3cm) node (3) {};

\draw[line width=0.9pt] (1) -- (2)(2) -- (3) (2) -- (4) (1) -- (4);
\draw[line width=0.9pt] (3) -- (4);
\
\draw[dotted, line width=1pt] (1) arc (120:60:1.8385);
%\draw[dotted, line width=1pt] (3) arc (330:390:1.8385);
}

\node[below] at (2){$x$};
\node[below] at (3){$y$};

\end{tikzpicture}
\end{center}
\end{minipage}
}
%%%%%%%%%%%%%%%%%%%%%%
%%%%%%%%%%%%%%%%%%%%%%%%%%%%%%%%%%%%%%
%%%%%%%%%%%%%%%%%%%%%%%%
\subfloat[$R_5$]{
\begin{minipage}[t]{0.15\textwidth}
\begin{center}
\begin{tikzpicture}[scale=0.7]
{
\tikzstyle{every node}=[draw,fill=black, minimum size=3pt,inner sep=0pt,shape=circle];

\path (45:1.3cm) node (4) {};
\path (180:0.91925cm) node (5) {};
\path (90:0.91925cm) node (6) {};

\path (225:1.3cm) node (2) {};\path (315:1.3cm) node (3) {};

\draw[line width=0.9pt] (5) -- (2)(2) -- (3) (2) -- (4) (6) -- (4) (5) -- (6);
\draw[line width=0.9pt] (3) -- (4);
\
\draw[dotted, line width=1pt] (180:0.91925) arc (180:90:0.91925);
%\draw[dotted, line width=1pt] (3) arc (330:390:1.8385);
}

\node[below] at (2){$x$};
\node[below] at (3){$y$};

\end{tikzpicture}
\end{center}
\end{minipage}
}
%%%%%%%%%%%%%%%%%%%%%%%%%%%%%%%%%%%%%%%%%%
\caption{Six small signed graphs with two specified vertices $x$ and $y$.}
\label{fig: six graphs}
\end{figure}
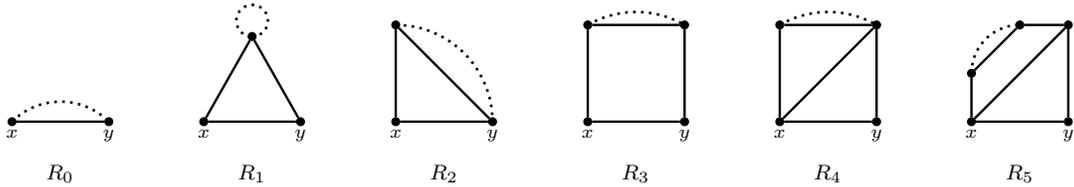 
  
%%%%%%%%%%%%%%%%%%%%%%%%%%%%%%%%%%%%%%%%%%

\begin{observation} \label{obs: covers of R}
{\rm (1)} $R_2$ has a $\Psi_{yx}(t)$-cover for each $t\in [0, 3]$, and a $\Psi_{xy}(2)$-cover in which exactly one tadpole at $y$ doesn't contain $x$.

\noindent
{\rm (2)} $R_3$ has a $\Psi_{xy}^*(2)$-cover.

\noindent
{\rm (3)} Both $R_4$ and $R_5$ have a $\Psi_{xy}(2)$-cover $\mathcal{F}$ satisfying that $y\notin V(T_{1})\cup V(T_{2})$, $x\notin V(T_{3})$ and $xy$ is in the tadpole-path of $T_{4}$, where $\{T_{1}, T_{2}\}$ and $\{T_{3}, T_{4}\}$ are the sets of tadpoles of $\mathcal{F}$ at $x$ and $y$, respectively.
\end{observation}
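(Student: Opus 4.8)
The plan is to verify the four assertions one at a time by writing down, for each of the small signed graphs $R_2, R_3, R_4, R_5$, an explicit family consisting of a few $xy$-paths, a few tadpoles, and a few signed circuits, and then checking by inspection that the family is a $6$-cover with the prescribed shape. Since each $R_i$ has at most seven edges, each verification reduces to solving a small linear system over the nonnegative integers.

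The first step is to record the structural features that keep this bookkeeping short. Each of $R_2, R_3, R_4, R_5$ has a single negative edge $d$, and $d$ lies in one unbalanced $2$-circuit $D$ (a digon); every other edge is positive. Consequently any two unbalanced circuits share $d$ and are not edge-disjoint, so there is \emph{no barbell} in $R_i$; hence the only signed circuits of $R_i$ are its balanced circuits, and these are precisely the circuits of the all-positive graph $R_i - d$ --- at most three circuits in each case, read off at a glance. Moreover a negative $xy$-path (or $yx$-path) must traverse $d$, and in each $R_i$ there is a \emph{unique} simple such path; each $R_i$ contains the edge $xy$, giving the cheapest positive $xy$-path. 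Finally every tadpole uses an unbalanced circuit and hence uses $d$, so in any $\Psi$-cover of $R_i$ the edge $d$ is covered precisely by the negative paths together with all of the tadpoles; this fixes the number of tadpoles and forces each tadpole's circuit to be $D$ (or an unbalanced circuit through a terminal), leaving only the \emph{tails} of the tadpoles to be chosen.

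I would then dispose of the assertions in turn. For (1): for each $t \in [0,3]$, take the $t$ negative $yx$-paths to be copies of the forced negative path, the $t$ positive $yx$-paths to be copies of the edge $xy$ and/or of the positive $2$-path through the third vertex, the tadpoles to have circuit $D$ (trivial tail at $y$, or a one-edge tail at $x$) or to be the unbalanced triangle through $x$, and the remaining members to be copies of the positive triangle; equating the multiplicity of each of the four edges to $6$ yields a linear system having an explicit nonnegative integer solution for every $t \in \{0,1,2,3\}$, which I would simply record. The second half of (1) is the $t = 2$ construction with one tadpole at $y$ taken to be $D$ (which omits $x$) and the other the unbalanced triangle through $x$. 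For (2): build a $\Psi_{xy}(2)$-cover of $R_3$ all of whose four tadpoles have circuit $D$; at $x$ take one tadpole with a one-edge tail avoiding $y$ (it then omits $y$) and one whose tail runs from $x$ through $y$ to a neighbour of $y$ (its tadpole-path then contains $xy$), and symmetrically at $y$; two copies of the edge $xy$, two copies of the unique negative path, and two copies of the unique balanced circuit complete a $6$-cover that is readily seen to be a $\Psi_{xy}^*(2)$-cover. For (3): repeat this for $R_4$ and $R_5$, using that $x$ now has degree $3$ to take \emph{both} tadpoles at $x$ to have circuit $D$ with short tails avoiding $y$, while $T_3$ is the unique tadpole at $y$ avoiding $x$ and $T_4$ is a tadpole at $y$ whose tail starts at $y$ and passes through $x$, so that its tadpole-path contains $xy$; two copies of $xy$, two copies of the forced negative path, and suitably many copies of the balanced circuits then give an exact $6$-cover with the required incidences.

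The only delicate point --- and hence the main obstacle, though it is minor --- is choosing the tails of the tadpoles so that the incidence conditions ($y \notin V(T_1) \cup V(T_2)$, $x \notin V(T_3)$, $xy$ in the tadpole-path of $T_4$, and the analogous conditions in (1) and (2)) hold simultaneously with the whole family being an exact $6$-cover. The constraints above --- one negative edge, one negative path, and only one, two, or three balanced circuits --- cut the search down to a finite check in each of the four graphs, which is why the observation is routine.
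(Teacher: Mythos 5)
Your proposal is correct and matches the paper's (omitted) treatment: the observation is stated there without proof as a routine finite verification on four small signed graphs, and that explicit verification---enumerating the balanced circuits, the unique negative $xy$-path, and the few possible tadpoles, then solving the resulting small nonnegative integer system for each required cover---is exactly what you carry out. The ``delicate point'' you flag is genuine (for $R_4$ and $R_5$ the two parallel-in-series edges at the degree-$2$ vertex of the negative digon rule out some naive tail choices, e.g.\ taking the tail of $T_4$ on the wrong side), but admissible choices exist in every case, so your plan does close up into exact $6$-covers with the stated incidence properties.
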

%%%%%%%%%%%%%%%%%%%%%%%%%%%%%%%%%%%%%%%%%%%

By convention, for any $H=H(u, v)$, the notation $H=R_i$ (resp., $H\sim R_i$) means that $G$ is isomorphic (resp., equivalent) to $H$ and $\{u, v\}$ corresponds to $\{x, y\}$; while $H=R_i(x, y)$ (resp., $H\sim R_i(x, y)$) means that $G$ is isomorphic (resp., equivalent) to $H$ and $(u, v)$ corresponds to $(x, y)$.

%%%%%%%%%%%%%%%%%%%%%%%%%%%%%%%%%%%%%%
\begin{lemma}\label{le: one block}
Let $H=H(x, y)$ and $G=\mathcal{P}(H\cup yz, xz)$ such that $xy\in E(H)$ and $xyzx$ is an unbalanced triangle. If $H\sim R_i(x,y)$ for some $i\in \{2, 4, 5\}$ or $H$ has a $\Psi_{xy}^*(2)$-cover, then $G$ has a $\Psi_{xz}^*(2)$-cover. 
 \end{lemma}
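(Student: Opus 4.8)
The plan is to regard $G=\mathcal P(H\cup yz,xz)$ as $H$ with the unbalanced triangle $xyzx$ attached along the edge $xy$, and to \emph{transport} a structured cover of $H$ across that triangle to obtain a $\Psi_{xz}^*(2)$-cover of $G$. Write $s=\sigma(xy)$, $\alpha=\sigma(xz)$, $\beta=\sigma(yz)$, so that $s\alpha\beta=-1$. The transport rests on a short list of elementary moves: appending the edge $yz$ to an $xy$-path of $H$ yields an $xz$-path of $G$ whose sign is multiplied by $\beta$; appending $xz$ to a tadpole of $H$ at $x$ (or to an $x$-rooted tadpole obtained by deleting the leading edge $xy$ from a tadpole at $x$ whose path starts $x,y,\dots$) yields a tadpole of $G$ at $z$ whose tadpole-path contains $xz$; appending $yz$ to a tadpole of $H$ at $y$ yields a tadpole of $G$ at $z$; prefixing $x,z,y$ to a $y$-rooted tadpole of $H$ that avoids $x$ yields a tadpole of $G$ at $x$ whose tadpole-path contains $xz$; the lone edge $xz$ is an $xz$-path of sign $\alpha$; and the triangle $xyzx$ is an unbalanced circuit covering each of $xy,yz,xz$ once, which may also be fused with a tadpole of the cover that meets it only at one vertex to form a barbell.

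Suppose first that $H$ has a $\Psi_{xy}^*(2)$-cover $\mathcal F=\mathcal C_0\cup\{P_1^{+},P_2^{+},P_1^{-},P_2^{-}\}\cup\{T_1,T_2\}\cup\{T_3,T_4\}$, with $T_1$ at $x$ avoiding $y$, $T_2$ at $x$ having $xy$ in its tadpole-path, $T_3$ at $y$ avoiding $x$, and $T_4$ at $y$ having $xy$ in its tadpole-path. The key observation is that deleting the leading edge $xy$ from $T_2$ produces a \emph{second} $y$-rooted tadpole of $H$ avoiding $x$ (besides $T_3$), and deleting the leading edge $xy$ from $T_4$ produces a second $x$-rooted tadpole of $H$ avoiding $y$ (besides $T_1$). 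Using these, one designates $T_1$ as the tadpole of $G$ at $x$ avoiding $z$, $\{xz,yz\}\cup(T_2-xy)$ (or $\{xz,yz\}\cup T_3$) as the tadpole of $G$ at $x$ with $xz$ in its path, $\{yz\}\cup T_3$ (or $\{yz\}\cup(T_2-xy)$) as the tadpole of $G$ at $z$ avoiding $x$, and $\{xz\}\cup(T_4-xy)$ as the tadpole of $G$ at $z$ with $xz$ in its path. The four $xy$-paths are turned into two positive and two negative $xz$-paths — here one splits on $s$, since $s=1$ gives $\alpha=-\beta$ and $s=-1$ gives $\alpha=\beta$, which changes whether the sign balance is achieved by $yz$-extending all four, or by mixing $yz$-extensions, $z$-closures into circuits through $x,y,z$, and extra copies of the bare edge $xz$. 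Finally $\mathcal C_0$ is kept and a controlled number of copies of the triangle (and, when a parity obstruction remains, one barbell built from the triangle and a tadpole of $\mathcal F$) is added, and one checks that every edge of $H$ other than $xy$ retains multiplicity $6$ automatically while the counts at $xy$, $xz$, $yz$ are simultaneously tuned to $6$.

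For the remaining cases $H\sim R_i(x,y)$ with $i\in\{2,4,5\}$ one substitutes the explicit covers of $R_i$ from Observation~\ref{obs: covers of R}. For $i\in\{4,5\}$ the cover given there has both tadpoles at $x$ avoiding $y$, one tadpole at $y$ avoiding $x$, and the other having $xy$ in its tadpole-path; the only datum missing relative to the $\Psi^*$ situation — a second $y$-rooted tadpole of $H$ avoiding $x$ — is supplied by an unbalanced circuit lying inside $R_i$ and meeting $y$, after which the same assembly goes through. For $i=2$ one instead starts from the $\Psi_{yx}(t)$-covers of $R_2$ together with its special $\Psi_{xy}(2)$-cover and runs the symmetric construction with the roles of the terminals reversed, or — since $\mathcal P(R_2\cup yz,xz)$ is a fixed small graph — exhibits a $\Psi_{xz}^*(2)$-cover directly.

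The step I expect to be the main obstacle is exactly this final balancing. One must simultaneously produce a tadpole of $G$ at $z$ avoiding $x$ and a tadpole of $G$ at $x$ whose tadpole-path runs $x,z,y,\dots$ — each of which consumes a ``$y$-rooted tadpole of $H$ avoiding $x$'' — while keeping the multiplicity of \emph{every} edge at exactly $6$; the delicate point is that rerouting $xy$ through $z$ lowers the $xy$-count, forcing extra copies of the unbalanced triangle back into the family, which in turn over-counts $yz$ and $xz$, so the proportions of $yz$-extended paths, $z$-closed circuits and bare edges $xz$ among the four $xz$-paths must be chosen just so — and differently according to the parity of $s$. That $xyzx$ is \emph{unbalanced} is precisely what makes the signs of the $xz$-paths come out $2$ positive and $2$ negative and what lets the triangle (or a barbell built from it) absorb the $xy$-discrepancy, and it is the spare unbalanced circuits inside $R_2,R_4,R_5$ that compensate for those graphs not carrying a full $\Psi_{xy}^*(2)$-cover.
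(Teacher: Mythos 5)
Your overall strategy coincides with the paper's: normalize the signs by switching, dismantle the two composite tadpoles of the $\Psi_{xy}^*(2)$-cover of $H$ into the edge $xy$ plus four ``clean'' tadpoles (two at $x$ avoiding $y$, two at $y$ avoiding $x$), and reassemble these pieces together with copies of $xz$, $yz$, $xy$ into a $\Psi_{xz}^*(2)$-cover of $G$; the paper carries this out by writing down an explicit ten-member family on an auxiliary multigraph. The problem is that you never exhibit the family, you explicitly flag the final multiplicity balancing as the unresolved ``main obstacle,'' and the partial assignment you do commit to cannot be completed. Concretely: once you take the tadpole of $G$ at $z$ containing $xz$ to be $xz\cup(T_4-xy)$, all four clean tadpoles are consumed by the four required tadpoles of the $\Psi_{xz}^*(2)$-cover, which together use $xz$ twice, $yz$ twice and $xy$ never. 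The remaining material is exactly $P_1,P_2,Q_1,Q_2$ (each to be used once) plus free copies of $xz,yz,xy$, and one checks all cases: whatever mix of $yz$-extensions and bare copies of $xz$ you use for the four $xz$-paths (the $2{+}2$ sign condition forces either $0$, $1$ or $2$ bare copies), the leftover negative paths $Q_i$ can only be absorbed into balanced circuits $xz\cup zy\cup Q_i$ — the $2$-gon $xy\cup Q_i$ is an unbalanced circuit with no admissible barbell partner left, since it shares the edge $xy$ with the triangle and every clean tadpole is already spent — and after that $xy$ is still short by two with $yz$ already saturated. So the two coverings of $xy$ lost in dismantling the composite tadpoles are never restored.

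The actual content of the lemma is precisely the correct allocation, which your assignment precludes: the tadpole at $z$ containing $xz$ must be $xz\cup(xy\cup Q_1)$, with the unbalanced $2$-circuit $xy\cup Q_1$ as its head; the freed tadpole $T_4-xy$ must instead be glued to the unbalanced triangle $xz\cup zy\cup yx$ to form a barbell (this is where the second copy of $xy$ goes); and $Q_2$ closes with $xz\cup zy$ into a balanced circuit. You name all three ingredients (the triangle-barbell, the $z$-closures, the bare edges $xz$) but never combine them, and the combination is not a routine check — it is essentially forced and is the whole proof. Two smaller points: the case split on $s=\sigma(xy)$ is unnecessary, since a switching makes $s=1$; and your treatment of $H\sim R_4,R_5$ via ``an unbalanced circuit lying inside $R_i$'' supplying the missing $y$-tadpole would disturb the multiplicity count — the paper handles $H\sim R_i$, $i\in\{2,4,5\}$, simply by noting that $G$ is then a fixed small graph and verifying a $\Psi_{xz}^*(2)$-cover directly.
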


\begin{proof}
With  possible switching,  we assume that $\sigma(xy)=1$. If $H= \{R_2(x,y), R_4(x,y), R_5(x,y)\}$, then $G$ is a small signed graph and thus it is easy to find a $\Psi_{xz}^*(2)$-cover of $G$.  Now we assume  that $H$ has a $\Psi_{xy}^*(2)$-cover $\mathcal{F}_H$. By the definition of $\Psi_{xy}^*(2)$-cover, let  
$$\mathcal{F}_H=\mathcal{C}_0\cup \{P_1, P_2\}\cup \{Q_1, Q_2\}\cup \{T_{x1}, xy\cup T_{y2}\}\cup \{T_{y1}, yx\cup T_{x2}\},$$
where $\mathcal{C}_0$ is a family of signed circuits, $P_1, P_2$ (resp., $Q_1, Q_2$) are two positive (resp., negative) $xy$-paths, $T_{u1}, T_{u2}$  are the two tadpoles at $u$  not  containing the  vertex in $\{x, y\}\setminus\{u\}$ for each $u\in \{x, y\}$.

Let $e_0=xy$, $e_1=xz$ and $e_2=zy$. Since $xyzx$ is unbalanced and $\sigma(e_0)=1$, WLOG, assume that $\sigma(e_1)=-1$ and $\sigma(e_2)=1$.  From $G$ and $\mathcal{F}\setminus \mathcal{C}_0$, we construct an auxiliary signed graph $G'$ shown in  Fig. \ref{fig: 2-2-cover}. Observe that the family  
\begin{flalign*}
\mathcal{F}_{G'}=&\{e_3\cup e_2, e_3\cup e_2\}\cup \{e_1, e_1\} \cup \{e_5, e_1\cup e_2\cup e_6\}\cup \{e_2\cup e_6, e_1\cup e_0\cup e_4\}\\
& \cup \{e_1\cup e_2\cup e_0\cup e_5, e_1\cup e_2\cup e_4\}
\end{flalign*}
covers $\{e_1, e_2\}$ $6$ times and $E(G')\setminus \{e_1,e_2\}$ twice. 
Let $\mathcal{F}_G$ be the family obtained from $\mathcal{F}_{G'}$ by replacing two $e_3$s with $P_1, P_2$, two  $e_4$s with $Q_1, Q_2$, two  $e_5$s with $T_{x1}, T_{x2}$, two $e_6$s with $T_{y1}, T_{y2}$. One can easily check that $\mathcal{F}_G\cup \mathcal{C}_0$ is a $\Psi_{xz}^*(2)$-cover of $G$.
\end{proof}

  %%%%%%%%%%%%%%%%%%%%%%%%%%%%%%%%%%%%%%%%%%%
\begin{figure}[htb]
\scriptsize
\captionsetup[subfloat]{labelsep=none,format=plain,labelformat=empty}
%%%%%%%%%%%%%%%%%%%%%%%%%%%%%%%%%%%%%%%%%
%%%%%%%%%%%%%%
\begin{center}
\begin{tikzpicture}[scale=0.7]
{
\tikzstyle{every node}=[draw,fill=black, minimum size=3pt,inner sep=0pt,shape=circle];

\path (90:2cm) node (1) {};
\path (180:2cm) node (2) {};
\path (0:2cm) node (3) {};

\draw[line width=0.9pt] (1) -- (2) (3) -- (1);

\draw[dotted, line width=0.9pt] (2) -- (3);

%\draw[dotted, line width=1pt] (1) arc (120:180:2.16);
\draw[line width=0.9pt] (1) arc (90:180:2);
\draw[dotted, line width=1pt] (90:2) arc (-90:270:0.5);
\draw[dotted, line width=1pt] (180:3) arc (180:-180:0.5);

%\draw[dotted, line width=0.85] (2) .. controls (215:3.5cm) and (135:3cm) .. (2);
\draw[dotted, line width=0.85] (2) .. controls (135:3cm) and (135:3cm) .. (1);
}
%\node[above] at (1){$v_{1}$};
\node[left] at (2){$x$};
\node[right] at (3){$z$};
\node[above] at (1){$y$};

\node[below] at (0:0){$e_1$};
\node[left] at (45:1.414){$e_2$};
\node[right] at (135:1.414){$e_0$};
\node[left] at (125:1.55){$e_3$};
\node[right] at (140:3.1){$e_4$};
\node[left] at (180:3){$e_5$};
\node[left] at (98:2.8){$e_6$};
\end{tikzpicture}
\end{center}
%%%%%%%%%%%%%%%%%%%%%%%%%%%%%%%%%%%%%%%%%%
%%%%%%%%%%%%%%%%%%%%%%%%
%%%%%%%%%%%%%%%%%%%%%%%%%%%%%%%%%%%%%%%%%%
\caption{An auxiliary signed graph $G'$.}
\label{fig: 2-2-cover}
\end{figure}
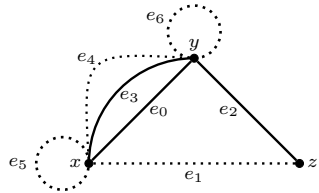

%%%%%%%%%%%%%%%%%%%%%%%%%%%%%

By  Observation \ref{obs: covers of R}, each member in $\{R_2, R_4, R_5\}$ has a $\Psi_{xy}(2)$-cover in which  at least one tadpole at $y$ doesn't contain $x$.  By this fact and a similar  method of the proof of Lemma \ref{le: one block},  we obtain the following lemma.

\begin{lemma}\label{le: two blocks}
Let $H_i=H_i(x, y_i)$ for each $i\in [1,2]$,  
$G=\mathcal{P}(H_1\cup y_1z, H_2\cup y_2z)$ and $G'$ be the signed graph obtained from $G$ by adding a new negative loop at $x$. 

\noindent
{\rm (1)} If $H_1=R_0$, and either $H_2\sim R_j(x, y)$ for some $j\in \{0, 2,4, 5\}$ or $H_2$ has a $\Psi_{xy_2}^*(2)$-cover, then both $G$ and $G'$ have a signed circuit $6$-cover. Moreover, $G$ has a $\Psi_{xz}(t)$-cover for  each $t\in [0, 3]$.

\noindent
{\rm (2)} If either $H_i\sim R_j(x, y)$ for some $j\in \{2,4,5\}$ or $H_i$ has a $\Psi_{xy_i}^*(2)$-cover for each $i\in [1,2]$, then both $G$ and $G'$ have a signed circuit $6$-cover. Moreover, $G$ has a $\Psi_{xz}(2)$-cover in which no tadpole at $z$ contains $x$.
 \end{lemma}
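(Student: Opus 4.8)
The proof will follow the template of the proof of Lemma~\ref{le: one block}: contract each block to a small gadget built from placeholder edges, solve the resulting bounded auxiliary signed graph by inspection, and substitute the original paths and tadpoles back in. First I would dispose of the degenerate instances. If $H_1=R_0$ and $H_2\sim R_j(x,y)$ with $j\in\{0,2,4,5\}$ in part (1), or if $H_1,H_2\sim R_j(x,y)$ with $j\in\{2,4,5\}$ in part (2), then $G$ and $G'$ belong to a finite list of small signed graphs and the required covers are written down directly, exactly as in the opening sentence of the proof of Lemma~\ref{le: one block}. In the remaining cases, after a suitable switching I normalise the signs of the connecting edges $f_1=y_1z$ and $f_2=y_2z$.

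For each block $H_i$ that has a $\Psi_{xy_i}^{*}(2)$-cover I read off from it two positive $xy_i$-paths $P^{(i)}_1,P^{(i)}_2$, two negative $xy_i$-paths $Q^{(i)}_1,Q^{(i)}_2$, two tadpoles $S^{(i)}_1,S^{(i)}_2$ at $x$, two tadpoles $T^{(i)}_1,T^{(i)}_2$ at $y_i$ with $S^{(i)}_1$ (resp.\ $T^{(i)}_1$) avoiding $y_i$ (resp.\ $x$) and the tadpole-paths of $S^{(i)}_2$ and $T^{(i)}_2$ carrying the edge $xy_i$ (Definition~\ref{def: 6-cover}(4)), together with a family $\mathcal{C}^{(i)}$ of signed circuits. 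If $H_i\sim R_j(x,y)$ with $j\in\{2,4,5\}$ while the other block is large, I use for $H_i$ the $\Psi_{xy_i}(2)$-cover furnished by Observation~\ref{obs: covers of R}, which likewise contains a tadpole at $y_i$ not meeting $x$. The one block that is not coverable in isolation is $H_1=R_0$ in part (1); there I keep $H_1$ uncontracted and work with $R_0\cup f_1$ as it stands, which already supplies one positive and one negative $xz$-path (each edge of $R_0$ completed by $f_1$), the unbalanced circuit $R_0$ as a tadpole at $x$, and $R_0\cup f_1$ as a tadpole at $z$.

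Next I form the bounded auxiliary signed graph $\widetilde{G}$ from $G$ by replacing each contracted $H_i$ by the four-edge gadget consisting of a positive $xy_i$-edge, a negative $xy_i$-edge, and negative loops at $x$ and at $y_i$, while keeping $f_1,f_2$ (and $R_0\cup f_1$, when present) intact; the auxiliary graph for $G'$ is $\widetilde{G}$ together with one further negative loop at $x$. Since $\widetilde{G}$ has boundedly many edges, I can exhibit explicit signed-circuit families realising every conclusion of the lemma: a signed circuit $6$-cover of $\widetilde{G}$ and of the auxiliary graph for $G'$, and, in part (1), a $\Psi_{xz}(t)$-cover for each $t\in[0,3]$, or, in part (2), a single $\Psi_{xz}(2)$-cover in which no tadpole at $z$ contains $x$. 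In each of these families $f_1$ and $f_2$ are covered six times, each placeholder $xy_i$-edge and each gadget loop is covered exactly twice, and in the auxiliary graph for $G'$ the added loop at $x$ is covered exactly six times by barbells; a barbell through the added loop is built by attaching it to a gadget loop at $x$ and, when necessary, routing through an $xz$-path, the edges $f_1,f_2$, and a gadget loop at $y_i$. This is where the hypothesis enters: each $H_i$ yields a tadpole at $y_i$ disjoint from $x$, so the gadget loops at $x$, $y_1$ and $y_2$ can be linked along $f_1,f_2$ into genuine barbells.

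Finally I substitute, replacing every positive (negative) $xy_i$-placeholder occurrence by one of $P^{(i)}_1,P^{(i)}_2$ (one of $Q^{(i)}_1,Q^{(i)}_2$), every gadget loop at $x$ by $S^{(i)}_1$ or $S^{(i)}_2$, and every gadget loop at $y_i$ by $T^{(i)}_1$ or $T^{(i)}_2$, using $T^{(i)}_1$ wherever a barbell running through $f_1,f_2$ is being assembled and $T^{(i)}_2$ or $S^{(i)}_2$ wherever the edge $xy_i$ must still be picked up; then $\mathcal{C}^{(1)}\cup\mathcal{C}^{(2)}$ is adjoined. A check in the style of the last paragraph of the proof of Lemma~\ref{le: one block} then confirms that each resulting member is a balanced circuit or a barbell, that the prescribed multiplicities are preserved, and that the $\Psi_{xz}$-structure and the special positions of the tadpoles survive the substitution. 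The main obstacle, exactly as in Lemma~\ref{le: one block}, is the sign bookkeeping: the normalisation of $\sigma(f_1),\sigma(f_2)$ and the choice of which of $P^{(i)}_k,Q^{(i)}_k$ to splice in must be made so that every closed walk meant to be a circuit comes out balanced and every walk meant to be a barbell contains exactly the two required unbalanced circuits; having to arrange this simultaneously for all four values of $t$ in part (1) and for both $G$ and $G'$ is what lengthens the case analysis, even though each individual verification is routine.
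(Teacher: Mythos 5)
Your proposal is correct and follows essentially the same route as the paper, which in fact gives no explicit proof of this lemma but states that it follows "by a similar method of the proof of Lemma~\ref{le: one block}": replace each block by a small auxiliary gadget of placeholder paths and loops, exhibit the required cover families on the resulting bounded signed graph, and substitute the actual paths and tadpoles (using the tadpole at $y_i$ avoiding $x$ to assemble the barbells through $y_1z$ and $y_2z$) back in. Your sketch, including the special treatment of $H_1=R_0$ in part (1) and the use of Observation~\ref{obs: covers of R} for the $R_j$ cases, is a faithful elaboration of exactly that argument.
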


\vspace{-0.3cm}
%%%%%%%%%%%%%%%%%%%%%%%%%%%%%%%%%%%%%%
\section{Proof of Theorem \ref{TH: main}}
  \label{Proof}
In this section, we will complete the proof of Theorem \ref{TH: main} by  contradiction. 

Let $G$ be a counterexample to Theorem \ref{TH: main} with minimum $|E(G)|$. Then $G$ is unbalanced since every coverable graph has a circuit $6$-cover (see  \cite{fan-6cover}). By the minimality, $G$ contains no $2$-vertices and can't be decomposed into two coverable signed subgraphs.
The latter implies that $G$ is connected and contains no positive loops.

\subsection{Properties of the smallest counterexample $G$}

In this subsection, we will present some properties of $G$.

For convenience, a two-terminal signed graph $H=H(x, y)$ is said to be a {\em piece} of $G$ at $\{x, y\}$ if there is another two-terminal signed graph $H'=H'(x, y)$ such that $G=\mathcal{P}(H, H')=H\cup H'$. 
       
%%%%%%%%%%%%%%%%%%%%%%%%%%%%%%%%%%%%%%%%

\begin{claim}\label{le: easy observations}
 The following statements hold.

\begin{itemize}

\item[\rm (1)] No two negative loops share a common vertex.  

\item[\rm (2)] $G$ is $2$-connected.

\item[\rm (3)] Every balanced piece of $G$ is a positive or negative $K_2$.

\item[\rm (4)] If $R_1$ is a piece of $G$ at $\{x, y\}$, then $d_G(x)\ge 4$ and $d_G(y)\ge 4$.

\item[\rm (5)] $G$ contains no balanced subgraph $H=K_4-y_1y_2$, where $K_4$ is the complete graph on vertices $x, y_1, y_2, y_3$ and $x$ is a $3$-vertex of $G$. 

\item[\rm (6)] $G-L(G)$ contains no adjacent $2$-vertices.
\end{itemize}
\end{claim}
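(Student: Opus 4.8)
The plan is to prove the six items in order, exploiting throughout the properties of the minimal counterexample $G$ established just above: $G$ is coverable, has no $2$-vertices and no positive loops, is connected, and admits no partition $E(G)=E(G_1)\sqcup E(G_2)$ into two coverable signed subgraphs. In each item I argue by contradiction from the forbidden configuration, aiming at one of two outcomes: either (a) exhibit such a partition — typically with $G_1$ a small coverable gadget (a signed circuit, a negative loop, a balanced block, a digon) and $G_2$ its complement, whose coverability is certified by Proposition~\ref{prop: coverable} — or (b) build a strictly smaller coverable $K_4$-minor-free graph $G^{*}$ by replacing a piece of $G$ with a smaller two-terminal gadget (a single edge, a negative loop, or one of $R_0,\dots,R_5,D_1,D_2$), apply minimality to get a signed circuit $6$-cover of $G^{*}$, and lift it to a $6$-cover of $G$ using the two-terminal machinery of Section~\ref{Properties} (Lemmas~\ref{le: four paths}--\ref{le: two blocks} and Observation~\ref{obs: covers of R}). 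Since every such replacement yields a minor of $G$, $K_4$-minor-freeness is preserved automatically.

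I would dispatch (1) first: the union of two negative loops at a vertex is a short barbell, hence a signed circuit; deleting one loop $L_1$ leaves an unbalanced graph which is still coverable by Proposition~\ref{prop: coverable} (any offending cut edge would already obstruct $G$), so by minimality it has a signed circuit $6$-cover whose six members through the surviving loop $L_2$ are barbells; keeping three of them, swapping $L_2$ for $L_1$ in the other three, and adding three copies of the short barbell $L_1\cup L_2$ gives a $6$-cover of $G$. For (3): a balanced piece $H=H(x,y)$ has no internal $2$-vertex (else $G$ has one) and $H+xy$ is a bridgeless ordinary graph, so replacing $H$ by the single edge $xy$ produces a smaller coverable graph; a circuit $6$-cover of it, with the six uses of $xy$ rerouted along six $xy$-paths of $H$ extracted from a circuit $6$-cover of $H+xy$ (Fan~\cite{fan-6cover}), reassembles into a $6$-cover of $G$, forcing $|E(H)|=1$. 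For (6): a vertex of degree $2$ in $G-L(G)$ but not in $G$ must carry exactly one loop, necessarily negative (using item (1) and the absence of positive loops); two adjacent such vertices $u,v$ thus carry negative loops $L_u,L_v$, so $L_u\cup uv\cup L_v$ is a long barbell and the piece of $G$ formed by $u,v$, their loops and incident edges reduces to a smaller two-terminal gadget, contradicting minimality. Item (4) is similar in spirit: if $R_1$ is a piece at $\{x,y\}$ and, say, $d_G(x)=3$, then $R_1$ together with the unique edge of $G$ at $x$ outside $R_1$ is a small piece whose $\Psi$-covers are readily computed, so Lemmas~\ref{le: one block}--\ref{le: two blocks} yield a $6$-cover of $G$; hence $d_G(x)\ge 4$, and symmetrically $d_G(y)\ge 4$.

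Items (2) and (5) are where the hypotheses bite hardest. For (5), the crucial point is that $K_4$-minor-freeness forces structure: if $H=K_4-y_1y_2$ lies in $G$ with $x$ a $3$-vertex, then a path from $y_1$ to $y_2$ in $G-x$ avoiding $y_3$ would, together with the five edges of $H$, give a $K_4$-minor; therefore $\{x,y_3\}$ is a $2$-cut of $G$ separating $y_1$ from $y_2$, which exhibits a piece of $G$ at $\{x,y_3\}$ containing the balanced triangle $xy_1y_3$, and this piece is reducible by the lemmas of Section~\ref{Properties}. I expect the real obstacle to be (2): a cut vertex $v$ splits $G$ into $G_1,G_2$ meeting only at $v$, and if both were coverable we would have the forbidden partition, so one — say $G_1$ — fails the criterion of Proposition~\ref{prop: coverable}, i.e. $\epsilon(G_1)=1$ or $G_1-b$ has a balanced component for some cut edge $b$. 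Unlike the other items there is no smaller gadget to invoke here, so one must argue directly: in the subcase where the obstruction of $G_1$ is already an obstruction of $G$ there is an immediate contradiction with coverability of $G$, while in the subcase where it is not, the unbalancedness of $G_2$ must be combined with the near-balancedness of $G_1$ (handling separately whether the unique negative edge in the $\epsilon(G_1)=1$ case is a loop) to manufacture either a $6$-cover of $G$ or the forbidden partition. Getting that bookkeeping right is the main work of the claim, with the $K_4$-minor argument of (5) the next most delicate point.
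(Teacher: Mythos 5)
Your items (1) and (3) track the paper's own argument closely and are essentially correct, and your opening observation in (5) --- that $K_4$-minor-freeness forces $y_3$ to separate $y_1$ from $y_2$ in $G-x$ --- is exactly the paper's. The largest gap is (2), which you correctly identify as the crux and then leave open. The missing device is this: one first shows that neither side $H_1,H_2$ of the cut vertex $x$ is balanced (a balanced side would be bridgeless, since a bridge of $H_i$ would give a bridge of $G$ with a balanced component, contradicting Proposition~\ref{prop: coverable}; coverability of that side then forces coverability of the other and hence the forbidden decomposition of $G$), and then attaches a \emph{new negative loop} $L_i$ at $x$ to each $H_i$. Each $H_i\cup L_i$ is a strictly smaller coverable signed graph, so minimality gives a signed circuit $6$-cover of each; the six members through $L_1$ and the six through $L_2$ are barbells whose $L_i$-lobes can be stripped off, leaving tadpoles at $x$ on either side, and pairing these across the cut produces six signed circuits of $G$. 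Your alternative plan of case-splitting on which obstruction of Proposition~\ref{prop: coverable} the non-coverable side exhibits does not obviously close, precisely because the obstruction ($\epsilon=1$, or a balanced component containing $x$) need not be visible in $G$; you acknowledge the bookkeeping is unresolved, and without the loop-augmentation trick it stays unresolved.

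There are also concrete problems in (4), (5) and (6). Lemma~\ref{le: 2-sum-a-b}(1), which you invoke for (4), requires the piece being replaced to admit a $\Psi(t)$-cover for \emph{every} $t\in[0,3]$; but the triangle of $R_1$ is balanced, so every $xy$-path of $R_1$ (and every $wy$-path of $R_1\cup xw$) has the same sign, and these pieces therefore have no $\Psi(t)$-cover for any $t\ge 1$. Similarly in (6) the piece $u'u\cup L_u\cup uv\cup L_v\cup vv'$ has a unique terminal path and its end edges lie on no signed circuit or terminal tadpole of the piece, so it admits no $\Psi(t)$-cover at all; no replacement lemma of Section~\ref{Properties} applies. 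The paper handles both by direct deletion-and-splice rather than gadget replacement: in (4) it deletes the edge $xy$, takes a $6$-cover of $G-xy$ by minimality, counts how its members meet $\{xz,yz,L_z\}$ to get $|\mathcal{G}_1|=|\mathcal{G}_2|=|\mathcal{G}_3|=t$ and $|\mathcal{G}_4|=6-2t$, and repairs by symmetric differences with the triangle; in (6) it deletes one loop (or, when that destroys coverability, observes that $G-\{L_x,L_y\}$ is balanced and splices barbells into Fan's ordinary $6$-cover). Likewise in (5), ``this piece is reducible by the lemmas of Section~\ref{Properties}'' is unsupported, since nothing is known at this stage about the $\Psi$-covers of the two sides of the $2$-cut; the paper instead verifies that $G-x$ is coverable, applies minimality to $G-x$, and splices the triangles $xy_iy_3x$ and three copies of the $4$-cycle $xy_1y_3y_2x$ back in. You would need to supply these direct constructions for (2), (4), (5) and (6) before the claim is proved.
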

\begin{proof}
(1) Suppose to the contrary  that $e_1, e_2$ are two negative loops at a vertex. Since $C_0=e_1\cup e_2$ is a short barbell, $G-\{e_1, e_2\}$ is not coverable,  so $G-e_1$ is coverable.  By the minimality of $G$, $G-e_1$ has a signed circuit $6$-cover $\mathcal{F}$. Pick three signed circuits $C_1, C_2, C_3$ from $\mathcal{F}$ containing $e_2$.  Then the family 
$$
(\mathcal{F}\setminus \{C_1, C_2, C_3\})\cup \{C_1\bigtriangleup C_0, C_2\bigtriangleup C_0, C_2\bigtriangleup C_0\}\cup 3\{C_0\}
$$
is a signed circuit $6$-cover of $G$, a contradiction. This proves (1).

\medskip \noindent
(2) Suppose to the contrary  that there are two subgraphs $H_1, H_2$ in $G$ such that $G=H_1\cup H_2$ and $V(H_1)\cap V(H_2)=\{x\}$.  Since the minimum degree of $G$ is at least three,  $|E(H_i)|\ge 2$ for each $i\in [1,2]$.  Note that if $H_i$ is balanced, then it is coverable and thus both $H_1$ and $H_2$ are coverable, a contradiction to the minimality of  $G$. Hence  neither $H_1$ nor $H_2$ is balanced.  Therefore  for each $i \in [1,2]$,  the signed graph obtained from $H_i$ by adding a new negative loop $L_i$ at $x$ is also coverable and thus has a signed circuit $6$-cover $\mathcal{F}_i$ by the minimality of $G$ again. Let $\mathcal{C}_i=\{C_{i1}, \dots, C_{i6}\}$ be the six signed circuits in $\mathcal{F}_i$ containing $L_i$.
Then the family  
$$
(\mathcal{F}_1\setminus \mathcal{C}_1)\cup (\mathcal{F}_2\setminus \mathcal{C}_2)\cup (\cup_{j=1}^6\{(C_{1j}\setminus \{L_1\})\cup (C_{2j}\setminus \{L_2\})\})$$
is a signed circuit $6$-cover of $G$, a contradiction. This proves  (2). 

\medskip \noindent
(3) Suppose to the contrary that there are two pieces $H_1, H_2$ of $G$ at $\{x, y\}$ such that $G=H_1\cup H_2$,   $H_2$ is balanced,  and $|E(H_2)|\ge 2$. Without loss of generality, assume that $H_2$ has a positive $xy$-path. Then all $xy$-paths in $H_2$ are positive since $H_2$ is balanced.

For each $i =1,2$, let $H_i'$ be the graph obtained from $H_i$ by  adding a new positive edge $e_i$ connecting $x$ and $y$. Then $|E(H_1')|<|E(G)|$ and $H_2'$ is balanced. Moreover, both $H_1'$ and $H_2'$ are $2$-connected and $K_4$-minor-free. Obviously, $H_2'$ has a balanced circuit $6$-cover, denoted by $\mathcal{F}_2$.

 We first show that $H_1'$ is  coverable. Suppose not. Since $H_1'$ is $2$-connected,  by Proposition~\ref{prop: coverable}, there is an edge  $e$ in $H_1'$ such that $H_1'-e$ is balanced. Since $e_1$ is a positive edge in $H_1'$, every $xy$-path in $H_1-e$ is positive and thus   $G-e$ is balanced. Since $G$ is unbalanced,  it is not coverable by Proposition~\ref{prop: coverable}, a contradiction. Therefore $H_1'$ is coverable.

By the minimality of $G$, $H_1'$ has  a signed circuit $6$-cover $\mathcal{F}_1$. 
 For each $i=1, 2$, let $\mathcal{C}_i=\{C_{i1}, \cdots, C_{i6}\}$ be the six members of $\mathcal{F}_i$ containing $e_i$. Since every member of $\mathcal{C}_{2}$ is a balanced circuit, the family 
$$(\mathcal{F}_1\setminus \mathcal{C}_1)\cup (\mathcal{F}_2\setminus \mathcal{C}_2)\cup \left(\cup_{i=j}^6\{(C_{1j}\setminus\{e_1\})\cup (C_{2j}\setminus\{e_2\})\}\right)
$$ 
is a signed circuit $6$-cover of $G$, a contradiction.  This proves (3).

\medskip \noindent
 (4)  By symmetry, we only need to show that $d_G(y) \geq 4$. Suppose by contradiction that $d_G(y)=3$. Let $H=H(x,y)$ be a piece of $G$ such that $G=\mathcal{P}(H, R_1)$. As shown in Fig. \ref{fig: six graphs}, denote $C_0=xyzx$ and $R_1=C_0\cup L_z$. Clearly $G-xy$ is $2$-connected and coverable. Thus  it has a signed circuit $6$-cover $\mathcal{G}$ by the minimality of $G$.  For each $C \in \mathcal{G}$, $E(C) \cap \{L_z, xz, yz\}$ is either $\{xz, L_z\}$, or  $\{yz, L_z\}$, or  $\{xz, yz\}$, or  $\{xz, yz, L_z\}$.  Denote by $\mathcal{G}_1$ (resp., $\mathcal{G}_2$, $\mathcal{G}_3$, $\mathcal{G}_4$) the set of signed circuits $C\in \mathcal{G}$ with 
$E(C)\cap E(R_1)=\{xz, L_z\}$ (resp., $=\{yz, L_z\}$, $=\{xz, yz\}$, $=\{xz, yz, L_z\}$).  
Since $\mathcal{G}(xz)=\mathcal{G}(yz)=\mathcal{G}(L_z)=6$, we have 
$$
|\mathcal{G}_1|+|\mathcal{G}_3|+|\mathcal{G}_4|=|\mathcal{G}_2|+|\mathcal{G}_3|+|\mathcal{G}_4|=|\mathcal{G}_1|+|\mathcal{G}_2|+|\mathcal{G}_4|=6.
$$
Thus there is an integer $t\in [0, 3]$ such that  $|\mathcal{G}_1|=|\mathcal{G}_2|=|\mathcal{G}_3|=t$ and $|\mathcal{G}_4|=6-2t$. Let $\mathcal{G}_i=\{C_{i1}, \dots, C_{it}\}$ for each $i\in [1,3]$ and $\mathcal{G}_4=\{C_{41}, \dots, C_{4(6-2t)}\}$. Then the family
$$
\left\{
\begin{array}{ll}
(\mathcal{G}\setminus \{C_{41}, C_{42}, C_{43}, C_{44}\}) \cup \{C_{41}\bigtriangleup zxy, C_{42}\bigtriangleup zxy, C_{43}\bigtriangleup xyz,  C_{44}\bigtriangleup xyz\}\cup 2\{C_0\} & \mbox{if $t\in [0,1]$};\\
(\mathcal{G}\setminus \{C_{11}, C_{31}, C_{43}, C_{44}\})\cup \{C_{11}\bigtriangleup C_0, C_{31}\bigtriangleup C_0, C_{43}\bigtriangleup xyz, C_{44}\bigtriangleup xyz\}\cup 2\{C_0\} & \mbox{if $t=2$};\\
(\mathcal{G}\setminus \{C_{31}, C_{32}, C_{33}\})\cup \{C_{31}\bigtriangleup C_0, C_{32}\bigtriangleup C_0, C_{33}\bigtriangleup C_0\}\cup 3\{C_0\} & \mbox{if $t=3$}
\end{array}
\right.
$$
 is a signed circuit $6$-cover of $G$, a contradiction. This proves (4).
 
 \medskip \noindent
  (5) Suppose that such a balanced subgraph $H$ exists. Since $G$ is $K_4$-minor-free, $y_3$ is in all $y_1y_2$-paths of $G-x$ and thus $y_3$ is a cut vertex of $G-x$. Let $H_1, H_2$ be two subgraphs of $G-x$ such that $G-x=H_1\cup H_2$,  $V(H_1)\cap V(H_2)=\{y_3\}$ and $y_iy_3\in E(H_i)$ for each $i\in [1,2]$. Note that $d_{H_i}(y_i)\ge 2$. Since $G$ is $2$-connected, by (3), either $H_i$ is $2$-connected and unbalanced, or $H_i$ is the union of $y_iy_3$ and a negative loop at $y_i$. Hence $G-x$ is coverable and thus has a signed circuit $6$-cover $\mathcal{F}$ by the minimality of $G$. Pick six distinct signed circuits $C_{11}, C_{12}, C_{13}, C_{21}, C_{22}, C_{23}$ from $\mathcal{F}$ such that $y_iy_3\in E(C_{ij})$ for  each $i\in [1,2]$ and $j\in [1,3]$. Then the family
  $$
  (\mathcal{F}\setminus ( \cup_{i=1}^2\cup_{j=1}^3 \{C_{ij}\}))\cup( \cup_{i=1}^2\cup_{j=1}^3 \{C_{ij} \bigtriangleup xy_iy_3x\})\cup 3\{xy_1y_3y_2x\}
  $$
  is a signed circuit $6$-cover of $G$, a contradiction. This proves (5).

 \medskip \noindent
(6) Suppose  to the contrary that  $x, y$ are two adjacent $2$-vertices of $G-L(G)$. If $G-L_y$ is not coverable, then $E_N(G-L_y)=\{L_x\}$ and $G-\{L_x, L_y\}$ is balanced. Since $G-\{L_x, L_y\}$ is $2$-connected, it has a balanced circuit $6$-cover $\mathcal{F}$. Pick $C_1, C_2, C_3\in \mathcal{F}$ with $xy\in E(C_i)$ for $i\in [1,3]$. Then  the family
$$
(\mathcal{F}\setminus \{C_1, C_2, C_3\})\cup (\cup_{i=1}^3\{L_x\cup xy \cup L_y, L_x\cup (C_i-xy)\cup L_y\})
$$
is a signed circuit $6$-cover of $G$, a contradiction. Therefore, $G-L_y$ is coverable. Let $\mathcal{F}'$ be a signed circuit $6$-cover of  $G-L_y$ by the minimality of $G$. Similar to the proof of (4), we can extend $\mathcal{F}'$ to  a signed circuit $6$-cover of $G$, a contradiction.   This proves (6) and thus completes the proof of the claim.
 \end{proof}

  %%%%%%%%%%%%%%%%%%%%%%%%%%%%%%%%%%%%%%%
\begin{claim} \label{le: 1-negative}
Let $H$ be a $2$-connected piece of $G$. If $\epsilon(H)=1$, then $H\sim R_i$ for some $i\in [0,5]$. 
   \end{claim}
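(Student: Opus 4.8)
The plan is to analyze a $2$-connected piece $H=H(x,y)$ of the smallest counterexample $G$ under the hypothesis $\epsilon(H)=1$ — that is, $H$ has an equivalent signature with exactly one negative edge and none with zero. After switching, I may assume $H$ has exactly one negative edge $e_0$, and $e_0$ is not a bridge of $H$ (else all of $H$'s circuits through $e_0$ would be unbalanced, but more to the point, a bridge would contradict $2$-connectivity since $H$ is $2$-connected and not a single edge — wait, $H$ could be a single negative edge plus nothing, but $2$-connected nonempty means $|V(H)|\ge 2$ and no cut vertex; a single $K_2$ is usually excluded from being called $2$-connected under the paper's convention or is the trivial case $R$-something). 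The key tool is that $G$ is $K_4$-minor-free, so $H$ is $K_4$-minor-free, hence has a vertex of degree at most $2$ somewhere, and more usefully admits an ear/series-parallel decomposition. First I would pin down the location of $x,y$ relative to $e_0$: since $H$ is $2$-connected and $K_4$-minor-free, its underlying graph has a simple recursive structure (series-parallel), so I can talk about the ``theta-like'' or nested structure around $e_0$.

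**The main case analysis.** The heart of the argument is a minimality/degree squeeze. Every vertex of $H$ other than possibly $x,y$ has degree $\ge 3$ in $G$ (no $2$-vertices in $G$ by the reduction at the start of Section~\ref{Proof}), and by Claim~\ref{le: easy observations}(6) there are no adjacent $2$-vertices in $G-L(G)$, and by Claim~\ref{le: easy observations}(1) loops don't share vertices. I would first handle the case where $H$ itself is small: enumerate, using $K_4$-minor-freeness plus $\epsilon(H)=1$ plus $2$-connectedness, the possible underlying graphs on few vertices — a negative edge $e_0$ together with parallel classes and short series paths. The list $R_0,\dots,R_5$ in Fig.~\ref{fig: six graphs} should exactly exhaust these: $R_0$ is the negative $2$-circuit-ish piece (a positive edge $xy$ plus a negative edge? actually $R_0$ as drawn is an edge plus a negative loop-like arc), $R_1$ is the triangle with a negative loop, $R_2,R_3,R_4,R_5$ are the triangle- and $K_4^-$-based pieces. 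So the claim reduces to: show $H$ cannot be larger than these. For that I would argue by minimality of $G$: if $H$ is a proper, large $2$-connected piece with $\epsilon(H)=1$, form $H^+$ by adding a suitable edge between $x$ and $y$ (or a negative loop somewhere) to make a smaller coverable $K_4$-minor-free signed graph; apply minimality to get a signed circuit $6$-cover of $H^+$; then decompose the six circuits through the added edge by the path/tadpole types, exactly as in the proofs of Claim~\ref{le: easy observations}(3),(4), and glue with a $\Psi_{xy}(t)$-cover structure of $H$'s complement $H'$ in $G$ — deriving a $6$-cover of $G$, a contradiction. This forces $H$ to be one of the finitely many exceptions, which one checks directly are equivalent to some $R_i$.

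**Key steps in order.** (1) Switch so $|E_N(H)|=1$, say $E_N(H)=\{e_0\}$; note $e_0$ is not a bridge. (2) Use $2$-connectedness and $K_4$-minor-freeness to get the series-parallel decomposition of $\overline H$ (the suppressed graph), and observe $\overline H$ has at most, say, $|V|\le 4$ by invoking the minimality reductions — this is where I'd push hardest. (3) If $\overline H$ has a ``long'' part or a part that is not one of a short list, isolate a sub-piece $H_1$ of $G$ contained in $H$, apply Lemma~\ref{le: 2-sum-a-b} or Lemma~\ref{le: one block}/\ref{le: two blocks} (or bare minimality with an added edge) to reduce, contradicting minimality of $|E(G)|$. (4) In the finitely many surviving configurations, verify by inspection that $H\sim R_i$ for the appropriate $i\in[0,5]$, using Claim~\ref{le: easy observations}(1),(4),(5),(6) to kill the degenerate ones (e.g., $R_1$ needs both ends of degree $\ge4$ in $G$, so if $H\sim R_1$ it is indeed on the list; a balanced $K_4-e$ is forbidden by (5); two loops at a vertex by (1); adjacent $2$-vertices by (6)).

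**The main obstacle.** The hard part will be step (2)–(3): showing that $2$-connected, $K_4$-minor-free, $\epsilon=1$ forces $|V(H)|$ small, i.e., that no large such piece survives. The subtlety is that $H$ is not an arbitrary signed graph — it sits inside $G$, whose complement piece $H'$ must also behave (it must be unbalanced or a $K_2$ by Claim~\ref{le: easy observations}(3), and $H'$ together with a loop is coverable), so the reduction has to simultaneously respect what $H'$ can absorb. Getting a clean case split on the parallel classes of $\overline H$ through $e_0$ — is $e_0$ in a $2$-element parallel class? a $3$-element one? is the ``core'' circuit a triangle or longer? — and showing each non-exceptional case admits an internal sub-piece reducible via Lemmas~\ref{le: 2-sum-a-b}–\ref{le: two blocks} is the bookkeeping-heavy crux. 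I expect the proof to proceed by: if $|V(\overline H)|\ge 3$ with a degree-$2$ vertex $v\notin\{x,y\}$ in $\overline H$, then $v$ is a $2$-vertex in $G-L(G)$... but wait, $v$ might carry a loop making it degree $3$ in $G$ — handle via Claim~\ref{le: easy observations}(6) and loop-specific reductions. Once the vertex set is bounded, the rest is a short finite check matching Fig.~\ref{fig: six graphs}.
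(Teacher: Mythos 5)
There is a genuine gap: you have correctly identified the crux (bounding the structure of $H$) but not supplied the argument that actually does it, and the specific route you sketch would not work as stated. First, the bound you hope for ($|V(\overline{H})|\le 4$) is already false, since $R_5$ has five vertices; the paper does not bound $|V(H)|$ by a constant at the outset. What the paper actually does is prove that $H$ is \emph{outerplanar} (not merely series-parallel): if $H$ had a $K_{2,3}$-minor, then among the three branches between the two hubs at least one would avoid all of $x,y,z_1,z_2$ (where $z_1,z_2$ are the ends of the unique negative edge $e_0$), and that branch would be an all-positive piece of $G$ with at least two edges, contradicting Claim \ref{le: easy observations}-(3). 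This outerplanarity, together with the fact that $xy$ and (essentially) $e_0$ lie on the outer facial circuit, reduces $H$ to a ``ladder'': two arcs $P_1=z_1Cx$ and $P_2=yCz_2$ with all chords running between them. Then Claim \ref{le: easy observations}-(5) (no balanced $K_4-e$ with a degree-$3$ apex) forbids any vertex from being adjacent to three consecutive vertices of the opposite arc, which pins every internal degree to $3$ or $4$ and the terminal degrees to $3$. Only after this does the finite case analysis begin, and long ladders are killed not by adding an edge to $H$ and invoking minimality on $H^{+}$, but by locating a small internal sub-piece ($R_1$, $R_3$, $R_4$, or a short series connection) that admits a suitable $\Psi$-cover and replacing it via Lemma \ref{le: 2-sum-a-b} (or by deleting two chords at a degree-$4$ vertex and repairing the resulting $6$-cover by symmetric differences with a triangle).

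Your proposed reduction --- add a positive edge $xy$ to $H$, apply minimality to $H^{+}$, and ``glue'' with $H'$ --- runs into exactly the difficulty that the $\Psi_{xy}(t)$-machinery was built to handle: since $\epsilon(H)=1$, the six members of a $6$-cover of $H^{+}$ through the new edge are a mixture of balanced circuits and barbells, and after deleting the new edge they become positive paths, negative paths, and tadpoles at $x$ or $y$ in proportions you cannot control; matching these against what $H'$ can absorb is precisely the content of Definition \ref{def: 6-cover}-(3),(4) and Lemmas \ref{le: 2-sum-a-b}--\ref{le: two blocks}, and it is where the exceptional graphs $R_0,\dots,R_5$ genuinely arise (they are the pieces for which no such matching exists). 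So the claim cannot be dispatched by a generic minimality-plus-gluing argument; the outerplanarity step and the chord/degree analysis are essential and are missing from your proposal.
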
 
%%%%%%%%%%%%%%%%%%%%%%%%%%%%%%%%%%%%%%%%%%%%%
\begin{proof}  
Let $H$ and $H'$ be two pieces of $G$ at $\{x, y\}$ such that $G= \mathcal{P}(H, H')$. Without loss of generality, assume that $E_N(H)=\{e_0\}$ and the ends of $e_0$ are $z_1, z_2$ (possibly $z_1=z_2$). If $|V(H)|\leq 3$, it is obvious that $H\in \{R_0, R_1, R_2\}$. Thus we assume  $|V(H)|\ge 4$. 

We first show that $H$ is outerplanar. Since $H$ is $K_4$-minor-free, it is sufficient to prove that $H$ is $K_{2,3}$-minor-free. Suppose by contradiction that $H$ has a $K_{2,3}$-minor. Then there are two distinct vertices $u, v$ and three internally disjoint $uv$-paths $P_1, P_2, P_3$ in $H$ such that  each $|V(P_i)| \geq 3$. For each $i\in [1,3]$, let $M_i$ be the component of $H-\{u, v\}$ containing $V(P_i)\setminus \{u, v\}$, and $M_i'=H[V(M_i)\cup \{u, v\}]-uv$. Since $H$ is $K_4$-minor-free, for $\{w_1,w_2\} = \{x,y\}$ or $\{z_1,z_2\}$, there are at least two members of $\{M_1, M_2, M_3\}$ containing neither $w_1$ nor $w_2$. Therefore there is an index in $[1,3]$, say $3$,  such that $\{x, y, z_1, z_2\}\cap V(M_{3})=\emptyset$. This implies that $M_3'$ is an all-positive piece of $G$ at $\{u, v\}$. By Claim \ref{le: easy observations}-(3),  $M_{3}'$ is a positive or negative $K_2$,  a contradiction  to $|E(M_{3}')|\ge |E(P_{3})|=|V(P_i)\setminus \{u, v\}|+1\ge 2$. This proves that $H$ is outerplanar.

Let $C$ be an outer facial circuit of $H$. Similar to the above argument, we can show that  $xy\in E(C)$ and either $e_0$ is  a negative loop not at $\{x, y\}$ or there is an edge $e_1\in E(H)$ such that $e_0\cup e_1$ is an unbalanced $2$-circuit and $E(C)\cap \{e_0, e_1\}\neq \emptyset$. Without loss of generality, assume that $z_1, x, y, z_2$ appear on $C$ in the cyclic order.
 Let 
$$
P_1=z_1Cx=u_0u_1\cdots u_{p} \mbox{ and } P_2=yCz_2=v_q\cdots v_{1}v_0
$$
such that $C=z_1z_2 \cup P_1\cup xy \cup P_2$, where $u_0=z_1$, $u_{p}=x$, $v_q=y$ and $v_{0}=z_2$. 
Because $H$ is outerplanar and  the minimum degree of $G$ is at least $3$, $E(H)\setminus E(C)\subseteq E_H[V(P_1), V(P_2)]$ by Claim \ref{le: easy observations}-(3). 

If $u_0=v_0$, then $e_0$ is  a negative loop not at $\{x, y\}$. Thus $u_0u_1v_1u_0\cup e_0=R_1$. Since $|V(C)|=|V(H)|\ge 4$, either $d_G(u_1)\leq 3$ or $d_G(v_1)\leq 3$, contradicting Claim \ref{le: easy observations}-(4). Hence $u_0\neq v_0$.

Note that no two indices $i\in [0, p]$ and $j\in [0,q-2]$ with $\{v_j, v_{j+1}, v_{j+2}\}\subseteq N_{H}(u_i)$.  Otherwise $H[\{u_i, v_j, v_{j+1}, v_{j+2}\}]-e_0=K_4-v_jv_{j+2}$ and $d_G(v_{j+1})=d_H(v_{j+1})=3$, where $K_4$ is the complete graph on $\{u_i, v_j, v_{j+1}, v_{j+2}\}$, contradicting Claim \ref{le: easy observations}-(5). By the symmetry of $P_1$ and $P_2$, it follows that $p\ge 1$, $q\ge 1$ and 
\begin{equation}\label{eq: degree}
\left\{\begin{array}{ll} 
d_H(w)=3 & \mbox{if $w\in \{u_p, v_q\}$;}\\
d_H(w)\leq 4 & \mbox{if $w\in V(H)\setminus \{u_p, v_q\}$.}
 \end{array} 
\right. 
\end{equation}

Let  $H^*=H[\{u_0, u_1, v_0, v_1\}]$. According to $d_H(u_0)$ and $d_H(v_0)$, we distinguish the following two cases.  

\medskip \noindent
Case 1. $d_H(u_0)=4$ or $d_H(v_0)=4$.

By the symmetry of $P_1$ and $P_2$, assume that $d_H(v_0)=4$. Then $d_H(u_0)=3$, $u_1v_0\in E(H)$ and $H[\{u_0, u_1, v_0\}]=R_1$. If $p=1$, then $H=H^*=R_4$ by Eq. (\ref{eq: degree}).  Now we assume  $p\ge 2$. 

If $d_H(u_1)=3$, then $u_2u_1\cup H[\{u_0, u_1, v_0\}]$ is a piece of $G$ at $\{u_2, v_0\}$ and has a $\Psi_{v_0u_2}(t)$-cover for each $t\in [0,3]$ in which  no  tadpole at $v_0$ contains  $u_2$. By Lemma \ref{le: 2-sum-a-b}-(1) and the minimality of $G$, $G$ has a signed circuit $6$-cover, a contradiction.

If $d_H(u_1)=4$, then $H^*=R_4$. Let $C_0=u_1v_0v_1u_1$ and $G'=G-\{u_1v_0, u_1v_{1}\}$. Clearly, $G'$ has a signed circuit $6$-cover $\mathcal{F}'$ by the minimality of $G$. Note that $d_{G'}(u_1)=2$ and $d_{G'}(u_0)=d_{G'}(v_0)=3$. By the structure of $G'$, there are $3$ signed circuits $C_1, C_2, C_3$ in $\mathcal{F}'$ such that  each of $C_1, C_2$ contains the tadpole $e_0\cup e_1\cup v_0v_1$ but not the vertex $u_1$, and $C_3$ contains the path $u_2u_1u_0\cup e_1\cup v_0v_1$. Hence the family
$$
(\mathcal{F}'\setminus \{C_1, C_2, C_3\})\cup \{C_1\bigtriangleup C_0, C_2\bigtriangleup C_0, C_3\bigtriangleup C_0\}\cup 3\{C_0\}
$$
is a signed circuit $6$-cover of $G$, a contradiction.

\medskip \noindent
Case 2. $d_H(u_0)=d_H(v_0)=3$.

If $p=q=1$, then $H=R_3$. If $p=1$ and $q\ge 2$,  then $d_H(v_1)=3$ by Eq. (\ref{eq: degree}) and thus $q=2$ and  $H=R_5$. By the symmetry of $P_1$ and $P_2$,  we  assume that $p\ge 2$ and $q\ge 2$. Then $u_1v_1\in E(H)$.

If $d_H(u_1)=d_H(v_1)=3$, then $H^*=R_3$. By Lemma~\ref{le: block-path}-(1) $u_2u_1\cup H^*\cup v_1v_2$ has a $\Psi_{u_2v_2}(2)$-cover satisfying the condition of Lemma \ref{le: 2-sum-a-b}-(2). Together with the minimality of $G$, we can obtain a signed circuit $6$-cover of $G$, a contradiction.

Assume that either $d_H(u_1)\ge 4$ or $d_H(v_1)\ge 4$. WLOG assume $d_H(u_1)\ge 4$. Let $G'=G-\{u_1v_1, u_1v_2\}$. With a similar argument in the case when  
 $d_H(u_1)=4$ in Case 1, one can find a signed circuit $6$-cover of $G$, a contradiction.
 This completes the proof of the claim.
\end{proof}

%%%%%%%%%%%%%%%%%%%%%%%%%%%%%%%%%%%%%%%%%%%%%%%%%%%%%%%%%
For two distinct $x, y\in V(G)$, let $t_G(x, y)$ denote the maximum number of  pieces $H_1, \dots, H_t$ of $G$ at $\{x, y\}$ such that $G=\mathcal{P}(H_1, \dots, H_t)$.

%%%%%%%%%%%%%%%%%%%%%%%%%%%%%%%%%%%%%%%%%%%

\begin{claim}\label{le: t<4}
$t_G(x, y)\leq 3$ for any two distinct vertices $x, y\in V(G)$.
\end{claim}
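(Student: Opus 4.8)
The plan is to argue by contradiction: suppose $t_G(x,y)\ge 4$, so $G=\mathcal P(H_1,\dots,H_t)$ with $t\ge 4$ and each $H_i=H_i(x,y)$ a piece of $G$. First I would use Claim~\ref{le: easy observations}-(3) to understand the balanced pieces: each balanced $H_i$ is a positive or negative $K_2$. Since $G$ has no positive loop and $x\ne y$ (if $x=y$ then each $H_i$ would be a loop at $x$, and two negative loops at a vertex contradict Claim~\ref{le: easy observations}-(1); and positive loops are excluded), there can be at most one positive $K_2$-piece and at most one negative $K_2$-piece among the $H_i$ (two parallel positive edges would form a balanced $2$-circuit, which is a balanced piece with $|E|\ge 2$, violating (3); similarly two parallel negative edges). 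Also at least two of the pieces must be unbalanced: if at most one $H_i$ is unbalanced, the union of the $\le 1$ unbalanced piece with a negative loop is coverable and the rest is coverable, so $G$ decomposes into two coverable signed subgraphs, contradicting the minimality setup. Hence among $H_1,\dots,H_t$ we have at least two unbalanced pieces and at most two balanced ($K_2$) pieces; since $t\ge 4$, there are at least two unbalanced pieces, and in fact combining the counts, if $t\ge 5$ there are at least three unbalanced pieces.

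The core of the argument is to take two unbalanced pieces, say $H_1$ and $H_2$, reassemble $G$ as $\mathcal P(H_1,\,\mathcal P(H_2,\dots,H_t))$, and add auxiliary edges to make each part smaller. Concretely, let $H'=\mathcal P(H_3,\dots,H_t)$ together with a new positive edge $e$ joining $x$ and $y$ if $H'$ is nonempty (or just $e$ itself, a positive $K_2$, if $t=4$ with both remaining pieces already handled; one must be slightly careful about whether the two $K_2$-pieces are present). Form $G_i=H_i\cup\{e_i\}$ where $e_i$ is a new negative edge $xy$, for $i=1,2$. Each $G_i$ is $2$-connected, $K_4$-minor-free, unbalanced, and satisfies Proposition~\ref{prop: coverable} (no balanced component after deleting a cut edge, since $H_i$ is unbalanced and $2$-connected or is itself a negative loop), hence coverable with $|E(G_i)|<|E(G)|$; by minimality each $G_i$ has a signed circuit $6$-cover $\mathcal F_i$. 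I would then extract the six signed circuits through $e_i$ in each cover, classify how they meet the three parallel "slots'' at $\{x,y\}$ (as is done in the proof of Claim~\ref{le: easy observations}-(4) and Lemma~\ref{le: 2-sum-a-b}), and splice the pieces: replace each occurrence of $e_1$ in $\mathcal F_1$ by the matching subgraph of $H_2\cup\cdots\cup H_t$ obtained from $\mathcal F_2$ (a positive $xy$-path, a negative $xy$-path, a tadpole at $x$, or a tadpole at $y$, according to type), and vice versa, to build a signed circuit $6$-cover of the whole of $G$ — contradicting that $G$ is a counterexample.

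The main obstacle is the bookkeeping of the "types'' of the spliced fragments and making sure the parities match so that each glued object is genuinely a signed circuit (balanced circuit or barbell), not merely a closed walk with the wrong sign. This is exactly the role played by the $\Psi_{xy}(t)$-cover machinery: I expect to need that each $G_i$ minus the artificial edge $e_i$ — equivalently each unbalanced piece $H_i$, augmented appropriately — yields, for the common value $t\in[0,3]$ of the three slot-multiplicities, a collection of $t$ positive $xy$-paths, $t$ negative $xy$-paths, $t$ tadpoles at $x$ and $6-2t$ tadpoles at $y$ (or the $x\leftrightarrow y$ mirror), so that the two families can be matched slot-by-slot. Establishing that this $\Psi_{xy}(t)$-type data can always be read off a $6$-cover of $G_i$ — essentially repeating the counting argument $|\mathcal F_{i,1}|+|\mathcal F_{i,3}|+|\mathcal F_{i,4}|=\dots=6$ to force $|\mathcal F_{i,1}|=|\mathcal F_{i,2}|=|\mathcal F_{i,3}|=t$, $|\mathcal F_{i,4}|=6-2t$ — and checking the sign compatibility at the gluing vertices is where the real work lies; the reduction to "at least two unbalanced, at most two balanced, $t\ge4$'' and the final contradiction are then routine.
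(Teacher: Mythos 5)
Your opening moves (assume $t\ge 4$, use Claim~\ref{le: easy observations}-(3) to bound the number of balanced pieces, dispose of $t\ge 5$ by splitting $G$ into two coverable parts) match the start of the paper's argument. But the core of your proposal --- augment $H_1$ and $\mathcal{P}(H_2,\dots,H_t)$ each by a single artificial negative edge $e_i$, obtain $6$-covers of the two smaller graphs by minimality, and splice the six fragments through $e_1$ with the six fragments through $e_2$ --- has a genuine gap, which you yourself flag as ``where the real work lies''; it is not a routine verification. A signed circuit containing a single artificial edge $e_i$ does not, after deleting $e_i$, reduce to one of the four types occurring in a $\Psi_{xy}(t)$-cover: a barbell whose artificial edge lies on one of its two unbalanced circuits leaves behind an $xy$-path with a tadpole pendant at an internal vertex, which cannot be matched against a plain path or tadpole from the other side. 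Moreover, the counting argument $|\mathcal F_1|+|\mathcal F_3|+|\mathcal F_4|=\dots=6$ that you invoke is only available when the replaced piece is a three-edge gadget ($R_1$ or $D_1$), so that every circuit is forced to use exactly two or three of its edges; with one artificial edge there is no such constraint, and nothing forces the multiset of fragment types produced on the $H_1$ side to be compatible with the multiset produced on the other side. (A smaller unaddressed point: $H_i\cup e_i$ need not be coverable merely because it is $2$-connected and unbalanced --- one must still rule out $\epsilon=1$.)

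The paper closes the claim by a structural route you do not touch. After reducing to $t=4$, it exploits the fact that $G$ cannot be decomposed into two coverable subgraphs to conclude that certain unions $H_i\cup H_j$ have negativeness exactly $1$, and then applies Claim~\ref{le: 1-negative} to identify those unions with one of the six explicit graphs $R_0,\dots,R_5$. This pins the whole configuration at $\{x,y\}$ down to a handful of concrete small signed graphs, for which a signed circuit $6$-cover is either exhibited directly or obtained via Lemma~\ref{le: 2-sum-a-b}-(1) after locating a pendant cut-edge in the remaining non-coverable piece. Without that classification, or some substitute argument guaranteeing the compatibility of your two spliced families, the proposal does not go through.
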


\begin{proof} Suppose to the contrary that there are two distinct vertices $x, y$ such that $t=t_G(x, y)\ge 4$. Let  $H_1, \dots, H_t$  be $t$ pieces  of $G$ at $\{x, y\}$ such that  $
G=\mathcal{P}(H_1, \dots, H_t).$
Since $G$ is $K_4$-minor-free,  no  $H_i$ is  $2$-connected by the maximality of $t$.

 Without loss of generality, assume  $\epsilon(H_1)\leq \epsilon(H_2)\leq \cdots \leq \epsilon(H_t)$.  Then by Claim~\ref{le: easy observations}-(3),  $\epsilon(H_1\cup H_2)\ge 1$ and $\epsilon(H_3)\ge 1$.  Thus if $t\geq 5$, then $G$ can be decomposed into two coverable subgraphs $H_1\cup H_2\cup H_3$ and $H_4 \cup \cdots \cup H_t$, a contradiction.  Hence $t=4$.

We first consider the case when  $\epsilon(H_2)=0$.  Then $H_1\cup H_2=R_0$ by Claim \ref{le: easy observations}-(3). Since $G$ cannot   not be decomposed into two coverable subgraphs, $H_i\cup H_j$ is not coverable for some  $i\in \{1,2\}$ and $j\in \{3,4\}$. WLOG,  assume that  $H_1\cup H_3$ is not coverable and thus $\epsilon(H_1\cup H_3)=1$. By Claim \ref{le: 1-negative}, for  some $k\in [1,5]$,
\begin{equation}\label{eq: H1H2H3}
H_1\cup H_3\sim R_k\mbox{ and } H_1\cup H_2\cup H_3\sim R_k\cup e,
\end{equation}
where $e$ is a negative edge not in $R_k$ with ends $x, y$. Since  $H_1\cup H_2\cup H_3$ is coverable, $H_4$ is not coverable. Thus by Proposition \ref{prop: coverable}, there is an edge $b$ of $H_4$ such that $H_4-b$ has a balanced component $M$. Since $H_4$ is not $2$-connected, it follows from Claim \ref{le: easy observations}-(3) that $M$ is the single vertex $x$ or $y$, say $y$. Then $d_{H_4}(y)=1$. 
Let $b=yy'$. Then $H=H_1\cup H_2\cup H_3\cup b$ is a piece of $G$ at $\{x, y'\}$. By Eq.~(\ref{eq: H1H2H3}), it is easy to check that for any $t\in [0, 3]$, $H$ has a $\Psi_{xy'}(t)$-cover in which no tadpole at $x$ contains $y'$. By Lemma \ref{le: 2-sum-a-b}-(1) and  the minimality of $G$, $G$ has a signed circuit $6$-cover, a contradiction.

Now we consider the case when $\epsilon(H_2)\ge 1$. Since $\epsilon(H_2)\ge 1$,  for any $\{i, j\}\subseteq [2,4]$, $\epsilon(H_i\cup H_j)\ge \epsilon(H_i)+\epsilon(H_j)\ge 2$. Thus  $H_i\cup H_j$ is coverable. This implies that  for each $j\in [2,4]$, $H_1\cup H_j$ is not coverable and thus  by  Claim \ref{le: 1-negative}, $H_1\cup H_j\sim R_{k_j}$ for some $k_j\in [1,5]$. With some switchings, assume that $H_1$ is the positive edge $xy$. Thus for  each $j\in [2,4]$,  
$$
H_j =R_{k_j}-xy.
$$
One can check directly that $G$ has  a signed circuit $6$-cover, a contradiction. This proves the claim.
\end{proof}

%%%%%%%%%%%%%%%%%%%%%%%%%%%%%%%%%%%

%%%%%%%%%%%%%%%%%%%%%%%%%%%%%%%%%%%%
\begin{claim}\label{le: H-L(H) balanced}
$H-L(H)$ is unbalanced for every $2$-connected piece $H$ of $G$.
\end{claim}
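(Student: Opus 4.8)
The plan is to argue by contradiction: suppose some $2$-connected piece $H=H(x,y)$ of $G$ has $H-L(H)$ balanced, and write $G=\mathcal P(H,H')$. First I would normalise. Since $H$ is $2$-connected it has at least three vertices, so by Claim~\ref{le: easy observations}(3) it is unbalanced and $L(H)\ne\emptyset$; as $G$ has no positive loops the loops of $H$ are negative, and by Claim~\ref{le: easy observations}(1) they sit at pairwise distinct vertices $z_1,\dots,z_k$ with $k\ge1$. After switching I may take $H-L(H)$ all-positive, so $\epsilon(H)=k$, the only unbalanced circuits of $H$ are the loops $L_{z_i}$, and every barbell of $H$ is a long barbell $L_{z_i}\cup P\cup L_{z_j}$ with $P$ a path in the $2$-connected all-positive graph $H-L(H)$. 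I would also note $H-L(H)$ must be simple: a detachable parallel pair of its edges would be a balanced non-$K_2$ piece of $G$, forbidden by Claim~\ref{le: easy observations}(3), and the only non-detachable case leaves $H-L(H)$ with two vertices, contradicting $|V(H)|\ge3$.

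Next, the case $k=1$. Then $\epsilon(H)=1$, so Claim~\ref{le: 1-negative} gives $H\sim R_i$ for some $i\in\{1,\dots,5\}$ ($R_0$ is excluded since $|V(H)|\ge3$). Each of $R_2,R_3,R_4,R_5$ is loopless and carries an unbalanced digon, so $R_i-L(R_i)=R_i$ is unbalanced, contrary to hypothesis; hence $H\sim R_1$, i.e.\ $H$ is an unbalanced triangle $xwy$ with a negative loop at its apex $w$ and $xw,wy,xy$ positive. I would then delete the base $xy$: the graph $G-xy$ is still $2$-connected and unbalanced ($xy$ is positive and $G$ is not balanced), and $\epsilon(G-xy)\ne1$ — otherwise, after switching, $L_w$ would be the only negative edge of $G-xy$, and since making $xy$ negative would force $xw$ or $wy$ negative, $xy$ stays positive and $\epsilon(G)=1$, impossible. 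So $G-xy$ is coverable by Proposition~\ref{prop: coverable}, hence by minimality has a signed circuit $6$-cover $\mathcal G$; sorting the members of $\mathcal G$ by how they meet $\{xw,wy,L_w\}$ and recombining with copies of the triangle $xwy$, exactly as in the proof of Claim~\ref{le: easy observations}(4), produces a signed circuit $6$-cover of $G$, a contradiction.

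For $k\ge2$ I would fix a $z_1z_2$-path $P$ in $H-L(H)$, set $B=L_{z_1}\cup P\cup L_{z_2}$, and look at $G-L_{z_1}$: deleting a loop changes no connectivity, so $G-L_{z_1}$ is $2$-connected with no cut-edge, and $L_{z_2}$ survives so $\epsilon(G-L_{z_1})\ge1$. If $\epsilon(G-L_{z_1})\ge2$, then $G-L_{z_1}$ is coverable and smaller, so by minimality it has a signed circuit $6$-cover $\mathcal F$; I would take the six members of $\mathcal F$ through $L_{z_2}$ — each a barbell with $L_{z_2}$ as one of its two unbalanced circuits — and reroute each through $L_{z_1}$ along a suitable $z_1z_2$-path, a symmetric-difference/rerouting surgery in the spirit of the proofs of Claim~\ref{le: easy observations}(1),(5), obtaining a signed circuit $6$-cover of $G$. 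If $\epsilon(G-L_{z_1})=1$, then after switching $L_{z_2}$ is the unique negative edge of $G-L_{z_1}$; since switchings leave loops unchanged, $L_{z_1},L_{z_2}$ are then the only negative edges of $G$, so $k=2$ and $G=G_0+L_{z_1}+L_{z_2}$ with $G_0:=G-L(G)$ a bridgeless ordinary graph. Here I would adjoin a new edge $f$ joining $z_1$ and $z_2$ to get a bridgeless ordinary graph $G_1$, take a circuit $6$-cover $\mathcal F_1$ of $G_1$ (Fan~\cite{fan-6cover}), observe that the six members of $\mathcal F_1$ through $f$ are of the form $f\cup P_i$ with $P_i$ a $z_1z_2$-path of $G_0$, and replace each by the barbell $L_{z_1}\cup P_i\cup L_{z_2}$; this turns $\mathcal F_1$ into a signed circuit $6$-cover of $G$. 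In every case we reach a contradiction, proving the claim.

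The hard part will be the two places that need genuine care: controlling $\epsilon(G-b)$ when $b$ is the deleted base edge or loop, so that Proposition~\ref{prop: coverable} and the minimality of $G$ can be invoked — this is exactly what pins down the clean ``all-positive base'' subcase above — and the rerouting surgery in the non-degenerate $k\ge2$ subcase, where the $z_1z_2$-paths must be chosen so that every rerouted member is still a genuine signed circuit (a balanced circuit or a long barbell through $L_{z_1}$), even though $z_1$ lies inside the large balanced core $H-L(H)$ rather than being isolated as in the model cases of Claim~\ref{le: easy observations}. Once one observes that $R_1$ is the only $R_i$ with $R_i-L(R_i)$ balanced, the $k=1$ step is routine.
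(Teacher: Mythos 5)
Your reduction of the $k=1$ case to $H\sim R_1$ via Claim~\ref{le: 1-negative} is fine, and your verification that $G-xy$ is coverable is correct, but the final step --- ``recombining \dots exactly as in the proof of Claim~\ref{le: easy observations}(4)'' --- is where the proof breaks. The surgery in Claim~\ref{le: easy observations}(4) is only valid because there $d_G(y)=3$: in the subcase $t=2$ one replaces $C_{11}$ (a barbell meeting $R_1$ in $\{xz,L_z\}$) by $C_{11}\bigtriangleup C_0$, and this is a signed circuit only if $y\notin V(C_{11})$, which is forced there by $d_{G-xy}(y)=2$. In your situation Claim~\ref{le: easy observations}(4) itself guarantees $d_G(x)\ge 4$ and $d_G(y)\ge 4$, so the members of $\mathcal G_1$ may well contain $y$ (and those of $\mathcal G_2$ contain $x$), and then none of the symmetric-difference substitutions produce signed circuits. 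This residual case ($t=2$ with $y\in V(C_{11})\cap V(C_{12})$ and $x\in V(C_{21})\cap V(C_{22})$) is exactly where the paper's proof does its real work: it first shows that the complementary piece $H'$ must have a cut-edge $uv$, then chooses the $6$-cover $\mathcal F'$ of $G-xy$ to maximize the number of balanced circuits and short barbells, and in the bad case re-splices the segments of the offending members across $uv$ to obtain a cover with strictly more balanced circuits and short barbells, contradicting the extremal choice. None of this is present in your argument, so the $k=1$ case is not closed.

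The $k\ge 2$ branch has a similar problem. The subcase $\epsilon(G-L_{z_1})=1$ is handled correctly and rather elegantly (forcing $k=2$, $G-L(G)$ balanced and bridgeless, adding $f=z_1z_2$ and converting the six circuits of Fan's $6$-cover through $f$ into long barbells). But in the subcase $\epsilon(G-L_{z_1})\ge 2$ the ``rerouting through $L_{z_1}$ along a suitable $z_1z_2$-path'' is not an argument: given a barbell $L_{z_2}\cup P\cup C$ in a $6$-cover of $G-L_{z_1}$, there is no symmetric-difference surgery with a long barbell $L_{z_1}\cup Q\cup L_{z_2}$ that adds $L_{z_1}$ once while leaving every other edge-multiplicity unchanged and producing signed circuits; the model cases you cite (Claim~\ref{le: easy observations}(1) and (5)) work only because the two unbalanced circuits involved share a vertex or sit on a prescribed triangle. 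This difficulty is precisely why the paper avoids general $k$: it takes $H$ edge-minimal among $2$-connected pieces with balanced loopless part, which forces $H=R_1$ or the $4$-circuit with two internal loops, and treats the latter by a separate reduction to $H'\cup\{L_x,L_y\}$. As written, both of your main branches leave their hardest subcase open.
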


\begin{proof}
Prove by contradiction. Let $x, y$ be two distinct vertices and $H$ be a piece of $G$ at $\{x, y\}$ such that 
\begin{itemize}
\item[(i)] $H-L(H)$ is balanced;
\item[(ii)] subject to (i), $|E(H)|$ is as small as possible. 
\end{itemize}
Without loss of generality, assume that $H-L(H)$ is all-positive. Then $E_N(H)=L(H)$.  By (ii),  no member of $L(H)$ has end $x$ or $y$. Denote by $H'$ another piece of $G$ at $\{x, y\}$ such that $G=\mathcal{P}(H, H')$.

We first show  $H=R_1$. It is sufficient to prove that $H-L(H)$ is a $3$-circuit. Suppose not. Then either $H-L(H)=xz_1yz_2x$ is a $4$-circuit by Claim \ref{le: easy observations}-(6) or $\overline{H-L(H)}$ has minimum degree at least $3$.  In the  former case , $H=xz_1yz_2x \cup \{L_{z_1}, L_{z_2}\}$ and hence any signed circuit $6$-cover of $H'\cup \{L_x, L_y\}$ can be extended to a signed circuit $6$-cover of $G$, where $L_u$ is a new negative loop at $u$ for $u\in \{x, y\}$, a contradiction. In  the latter case, since $G=H\cup H'$ is $2$-connected and $K_4$-minor-free, $\overline{G-L(H)}$ contains a $2$-circuit, which corresponds to a $2$-connected piece of $H$ (and thus $G$) satisfying (i), a contradiction to (ii).  Thus $H=R_1$.

Next we show that  $H'$  has a cut-edge. Otherwise $H'$ is $2$-edge-connected. Since $G=\mathcal{P}(H, H')$ and $H$ is $2$-connected, $t_{H'}(x, y)=t_G(x, y)-t_H(x, y)\leq 1$ by Claim \ref{le: t<4}. Thus  $H'$ contains cut vertices separating $x$ from $y$.
This implies that there are $s$ $(\ge 2)$ $2$-connected subgraphs or negative loops $B_1, \dots, B_s$ such that $H'=\mathcal{S}(B_1, \dots, B_s)$ with $x\in V(B_1)$ and $y\in V(B_s)$. By Claim \ref{le: easy observations}-(3), $\epsilon(B_i)\ge 1$ for  each $i\in [1,s]$. Moreover we have $s=2$ and $\epsilon(B_1)=\epsilon(B_2)=1$  since  $G$ can't be decomposed into two coverable subgraphs.
  By Claim \ref{le: 1-negative}, $B_1\sim R_{j_1}$ and $B_2\sim R_{j_2}$ for some  $j_1, j_2\in [0, 5]$. Since $H = R_1$, one can find  a signed circuit $6$-cover of $G$, a contradiction. Thus $H'$ has a cut-edge. 
  
By the above two claims,  let $H=C_0\cup L_z$ where $C_0=xzyx$,  let $uv$ be a cut-edge of $H'$, and let  $M_1, M_2$ be two components of $H'-uv$ with $x, u\in V(M_1)$ and $y, v\in V(M_2)$. 

Let $G'=G-xy$.  Then  $G'$ is $2$-connected and coverable. By the minimality of $G$, $G'$  has a signed circuit $6$-cover. Choose a signed circuit $6$-cover $\mathcal{F}'$ of $G'$ such that the number of balanced circuits and short barbells in $\mathcal{F}'$ is as large as possible. 

To complete the proof, we will construct a signed circuit $6$-cover $\mathcal{F}$ of $G$ from $\mathcal{F}'$.

With a similar argument of the proof of Claim \ref{le: easy observations}-(4), one can show that  there is an integer $t\in [0, 3]$ and four families $\mathcal{F}_i=\{C_{i1}, \dots, C_{it_i}\}$, $i\in [1, 4]$, in $\mathcal{F}'$ such that $t_1=t_2=t_3=t$, $t_4=6-2t$ and for every $C\in \mathcal{F}_1$ (resp., $\mathcal{F}_2$, $\mathcal{F}_3$, $\mathcal{F}_4$), $E(C)\cap E(H)=\{L_z, zx\}$ (resp., $=\{L_z, yz\}$, $=\{zx, yz\}$, $=\{L_z, zx, yz\}$).

If $t\in [0,1]$,  let
 \begin{equation*}\label{eq: b=1}
\mathcal{F}=
(\mathcal{F}'\setminus \{C_{41}, C_{42}, C_{43}, C_{44}\})\cup \{C_{41}\bigtriangleup zxy, C_{42}\bigtriangleup zxy, C_{43}\bigtriangleup xyz,C_{44}\bigtriangleup xyz\}\cup 2\{C_0\}.
 \end{equation*}

If $t=3$,  let
$
\mathcal{F}=(\mathcal{F}'\setminus \{C_{31}, C_{32}, C_{33}\})\cup \{C_{31}\bigtriangleup C_0, C_{32}\bigtriangleup C_0, C_{33}\bigtriangleup C_0\}\cup 3\{C_0\}.
$

If $t=2$ and either $y\notin V(C_{11})\cap V(C_{12})$ or $x\notin V(C_{21})\cap V(C_{22})$, say $y\notin V(C_{11})$,  let 
\begin{equation*}\label{eq: b=2}
\mathcal{F}= (\mathcal{F}'\setminus \{C_{11}, C_{31}, C_{41}, C_{42}\})\cup \{C_{11}\bigtriangleup C_0, C_{31}\bigtriangleup C_0, C_{41}\bigtriangleup xyz, C_{42}\bigtriangleup xyz\}\cup 2\{C_0\}.
\end{equation*}

In each of the above case, we obtain a signed circuit $6$-cover of $G$, a contradiction.
 
Finally we consider the case that $t=2$, $y\in V(C_{11})\cap V(C_{12})$ and $x\in V(C_{21})\cap V(C_{22})$.

Then $uv\in \cap_{j=1}^2\left(E(C_{1j})\cap E(C_{2j})\cap E(C_{4j})\right)$ but $uv\notin E(C_{31})\cup E(C_{32})$. For each $j\in [1,2]$,  denote by $P_{1j}$ (resp., $T_{2j}$, $P_{4j}^1$),  the segment of $C_{1j}$ (resp., $C_{2j}$, $C_{4j}$) in $M_1$, and by $T_{1j}$ (resp., $P_{2j}$, $P_{4j}^2$) the segment of $C_{1j}$ (resp., $C_{2j}$, $C_{4j}$) in $M_2$. 
 Thus
$$
C_{1j}=L_z\cup zx\cup P_{1j}\cup uv \cup T_{1j},\  C_{2j}=L_z\cup zy\cup P_{2j}\cup vu\cup T_{2j},\  C_{4j}=L_z\cup zx \cup P_{4j}^1\cup uv \cup P_{4j}^2\cup yz.
$$
Clearly  $P_{1j}$ and $P_{4j}^1$ are $xu$-paths,  $P_{2j}$ and $P_{4j}^2$ are  $vy$-paths, and $T_{1j}$ (resp., $T_{2j}$) is a tadpole at $v$ (resp., $u$).

Since $uv\notin E(C_{31})$ and $z$ is a cut vertex of $G'-uv$, $C_{31}$ is a barbell and $z$ is in the barbell-path of $C_{31}$. Hence there are two barbells, denoted by $C_{31}^1, C_{31}^2$,  in $C_{31}\cup L_z$ such that $\{C_{31}^1, C_{31}^2\}$ covers $C_{31}$ once and $L_z$ twice. 

If  $\sigma(P_{1j_1})\sigma(P_{2j_2})\neq \sigma(P_{41}^1)\sigma(P_{41}^2)$ for some  $j_1, j_2\in [1, 2]$, then $C_1=zx \cup P_{1j_1}\cup uv \cup P_{2j_2}\cup yz$ is a balanced circuit. Let
$$
\mathcal{F}''=(\mathcal{F}'\setminus \{C_{1j_1}, C_{2j_2}, C_{31}\})\cup \{C_1, C_{31}^1, C_{31}^2, T_{1j_1}\cup uv\cup T_{2j_2}\}.
$$
If $\sigma(P_{1j_1})\sigma(P_{2j_2})=\sigma(P_{41}^1)\sigma(P_{41}^2)$ for any $j_1, j_2\in [1, 2]$, then both $C_2=zx \cup P_{11}\cup uv \cup P_{21}\cup yz$ and $C_3=zx \cup P_{12}\cup uv \cup P_{22}\cup yz$ are unbalanced circuits. Let 
$$
\mathcal{F}''=(\mathcal{F}'\setminus \{C_{11}, C_{12}, C_{21}, C_{22}, C_{31}\})\cup \{C_2\cup L_z, C_3\cup L_z, C_{31}^1, C_{31}^2, T_{11}\cup vu\cup T_{21}, T_{12}\cup vu\cup T_{22}\}.
$$
In both cases, $\mathcal{F}''$ is a signed circuit $6$-cover of $G'$ with a larger number of balanced circuits and short barbells than $\mathcal{F}'$, a contradiction to the choice of $\mathcal{F}'$. This completes the proof of the claim.
\end{proof}

%%%%%%%%%%%%%%%%%%%%%%%%%%%%%%%%%%%%%%%%%%%%%
\begin{claim}\label{le: balanced triangle}
Every balanced $3$-circuit is in a piece $H$ of $G$ with $H\sim R_i$ for some $i\in \{2, 4, 5\}$. 
\end{claim}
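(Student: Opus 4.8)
The plan is to produce, for the given balanced $3$-circuit $T=x_1x_2x_3x_1$, a piece $H$ of $G$ with $T\subseteq H$, to argue that $H$ is $2$-connected with $\epsilon(H)=1$, and then to invoke Claim~\ref{le: 1-negative} to conclude $H\sim R_i$ for some $i\in[0,5]$, finally ruling out $i\in\{0,1,3\}$. After a switching I may assume that all three edges of $T$ are positive; note $G\ne T$ since $G$ is unbalanced.

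To locate $H$, I would use that $G$ is $2$-connected (Claim~\ref{le: easy observations}-(2)) and $K_4$-minor-free, so it decomposes along its $2$-cuts into pieces. $K_4$-minor-freeness forbids a single piece of $G$ from being attached to all three of $x_1,x_2,x_3$ (contracting such a piece to a vertex would yield a $K_4$-minor), and, since $G$ has no positive loops and no two negative loops share a vertex (Claim~\ref{le: easy observations}-(1)), the only subgraph attached to $T$ at exactly one vertex is a single negative loop. Fix a pair, say $\{x_1,x_2\}$, and let $H$ be the piece of $G$ at $\{x_1,x_2\}$ consisting of the edge $x_1x_2$ together with the parallel parts running through $x_3$ and everything attached at $\{x_1,x_3\}$ or at $\{x_2,x_3\}$; then $T\subseteq H$ (as $x_3$ is internal to $H$, both edges $x_1x_3,x_2x_3$ lie in $H$), while the complementary piece $H'$ with $G=\mathcal{P}(H,H')$ absorbs the loops at $x_1,x_2$ and all parts attached only at $\{x_1,x_2\}$. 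Among the at most three choices of the pair I take one minimizing $|E(H)|$.

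Next I would check that $H$ is $2$-connected and that $\epsilon(H)=1$. A cut vertex of $H$ would split off either a proper piece of $G$ still containing the triangle $T$ (contradicting the minimal choice of $H$) or a pendant balanced block, which by Claim~\ref{le: easy observations}-(3) is a $\pm K_2$ --- impossible since all single-vertex loops were placed in $H'$; so $H$ is $2$-connected. Since $H$ contains a triangle it is not a $\pm K_2$, hence unbalanced by Claim~\ref{le: easy observations}-(3), so $\epsilon(H)\ge 1$. If $\epsilon(H)\ge 2$, then the cut-edge-free $2$-connected graph $H$ is coverable by Proposition~\ref{prop: coverable}, and a short analysis of $H'$ (via Claims~\ref{le: easy observations}-(3),~\ref{le: 1-negative} and Proposition~\ref{prop: coverable}) shows that either $H'$ is coverable too, so $G=\mathcal{P}(H,H')$ decomposes into two coverable signed subgraphs --- contradicting the choice of $G$ --- or $H'$ is degenerate (a $\pm K_2$, or carrying one negative edge or loop in a controlled position), in which case a signed circuit $6$-cover of the smaller graph $H$ (which exists by the minimality of $G$) extends across $H'$ to one of $G$, again a contradiction. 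Hence $\epsilon(H)=1$. Now Claim~\ref{le: 1-negative} gives $H\sim R_i$ for some $i\in[0,5]$, and since $R_0$ and $R_3$ contain no $3$-circuit we get $i\in\{1,2,4,5\}$. If $H\sim R_1$, then up to switching and relabelling $H=T\cup L_{x_3}$ with $L_{x_3}$ a negative loop and $\{x_1,x_2\}$ the terminals, so $d_G(x_3)=4$ and $d_G(x_1),d_G(x_2)\ge 4$ by Claim~\ref{le: easy observations}-(4); I would eliminate this case by a cover-extension argument in the spirit of the proof of Claim~\ref{le: easy observations}-(4) --- delete a suitable edge incident to $x_3$ to obtain a smaller coverable $K_4$-minor-free graph, take a signed circuit $6$-cover of it by the minimality of $G$, and rebuild a signed circuit $6$-cover of $G$ using $T$ and $L_{x_3}$. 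Therefore $i\in\{2,4,5\}$, and $T$ lies in the piece $H\sim R_i$, as required.

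The main obstacle I expect is twofold. First, the bookkeeping that guarantees the piece $H$ containing $T$ can be taken $2$-connected with $\epsilon(H)=1$: this is not a one-line observation but a genuine reduction (either $G$ splits into two coverable signed subgraphs or a smaller $6$-cover extends), and it relies on carefully tracking the parallel parts around $T$ together with Claims~\ref{le: easy observations},~\ref{le: 1-negative} and~\ref{le: t<4}. Second, disposing of the case $H\sim R_1$: this is the only member of $\{R_0,\dots,R_5\}$ besides $R_2,R_4,R_5$ that contains a triangle, so the structural list of Claim~\ref{le: 1-negative} does not rule it out, and it must be killed by a dedicated cover-extension/minimality argument using the degree constraints supplied by Claim~\ref{le: easy observations}-(4).
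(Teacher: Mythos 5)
Your skeleton matches the paper's (decompose $G$ around the balanced triangle, isolate a $2$-connected piece of negativeness $1$ containing it, invoke Claim~\ref{le: 1-negative}, then discard $R_0,R_1,R_3$), but the two steps you yourself flag as the main obstacles are precisely where the argument breaks down. The first gap is the reduction to $\epsilon(H)=1$. Your $H$ is the edge $x_1x_2$ together with \emph{both} sides of the triangle through $x_3$, and your dichotomy for $\epsilon(H)\ge 2$ fails in the generic cases: if both sides through $x_3$ are unbalanced, or if the complementary piece $H'=G_{x_1x_2}-x_1x_2$ has negativeness exactly $1$ (which happens whenever $\epsilon(G_{x_1x_2})=2$, since deleting one edge lowers negativeness by at most one), then $H'$ is neither coverable (Proposition~\ref{prop: coverable}) nor ``degenerate,'' so $G$ does not split into two coverable subgraphs and there is no smaller graph whose $6$-cover you can simply ``extend across $H'$'' --- extending a circuit $6$-cover over even a single negative parallel edge already requires the $\Psi$-cover machinery of Lemma~\ref{le: 2-sum-a-b}, not a bare $6$-cover of $H$. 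The paper's mechanism is different and essential: it orders the three sides $G_{xy},G_{yz},G_{xz}$ by negativeness, shows the smallest is a single positive edge, and when the two remaining sides both have negativeness at least $2$ it takes a $6$-cover of each side \emph{with its own triangle edge included} and splices the two covers by symmetric-differencing three members of each with the triangle $C$, so that the third triangle edge is covered $3+3=6$ times; the residual case where all three sides have negativeness $1$ is finished by a direct construction from the explicit $R_i$-structure of all three sides. Only after this does one know $H$ is ``one unbalanced side plus a path of length two,'' which is what makes it classifiable.

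The second gap is the elimination of $R_1$. The detour you propose (delete an edge at $x_3$, take a $6$-cover of the smaller graph, rebuild) is both unnecessary and unproven: deleting $x_1x_3$ or $L_{x_3}$ need not leave a coverable graph, and the rebuilding is not the computation of Claim~\ref{le: easy observations}-(4), whose hypothesis $d_G(y)=3$ you no longer have once both terminals have degree at least $4$. The paper kills $R_1$ in one line with Claim~\ref{le: H-L(H) balanced}: a $2$-connected piece equivalent to $R_1$ would have $H-L(H)$ balanced (a switching of a positive triangle plus a negative loop), contradicting that claim. You should cite that claim here rather than attempt a fresh cover-extension.
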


\begin{proof}
Let $C=xyzx$ be a balanced triangle. For any $\{u, v\}\subseteq V(C)$, 
$$
V_{uv}=\{w\in V(G)\setminus V(C) : \mbox{ there is a $uv$-path containing $w$ but not the third vertex  $V(C)$ in $G$}\}.
$$ 
Since $G$ is $2$-connected and $K_4$-minor-free, $\{V_{xy}, V_{xz}, V_{yz}\}$ is a partition of $V(G)\setminus V(C)$.  Let $G_{uv}=G[V_{uv}\cup \{u, v\}]$, where every loop at $V(C)$ belongs to exactly one of $\mathcal{G}=\{G_{xy}, G_{xz}, G_{yz}\}$. Then $G_{uv}$ is a piece of $G$ at $\{u, v\}$ and 
$$G=\mathcal{P}(\mathcal{S}(G_{xz},G_{zy}), G_{xy})=G_{xz}\cup G_{zy}\cup G_{xy}.$$
Without loss of generality, assume that $\epsilon(G_{xy})\ge \epsilon(G_{yz})\ge \epsilon(G_{xz})$. Note that, by the definition and Claim \ref{le: easy observations}-(3), every $G_{uv}\in \mathcal{G}$ is a positive edge if $\epsilon(G_{uv})=0$ and is $2$-connected if $\epsilon(G_{uv})\ge 1$.

If $\epsilon(G_{xz})\ge 1$, then $\epsilon(G_{xy})\ = \epsilon(G_{yz}) = \epsilon(G_{xz}) =1$; otherwise  $G$ can be decomposed into two coverable subgraphs $G_{xy}$ and $G_{xz}\cup G_{yz}$, a contradiction.  By Claim~\ref{le: 1-negative}, every $G_{uv}\in \mathcal{G}$ is equivalent to $R_i$ for some $i\in [0, 5]$. One can check easily that $G$ has a signed circuit $6$-cover, a contradiction.  Therefore $\epsilon(G_{xz}) = 0$.  By Claim~\ref{le: easy observations}-(3), $G_{xz}=xz$.

Note that $G_{yz} \not = yz$ otherwise $z$ is a $2$-vertex of $G$.  Thus $\epsilon(G_{yz}) \geq 1$.

If  $\epsilon(G_{yz}) \geq 2$, then  $\epsilon(G_{xy}) \geq \epsilon(G_{yz})\geq 2$. This implies that  both $G_{xy}$ and $G_{yz}$ are coverable. By the minimality of $G$, let $\mathcal{F}_1$ and $\mathcal{F}_2$ be two signed circuit $6$-covers of $G_{xy}$ and $G_{yz}$, respectively. For each  $i\in [1,2]$, pick three members $C_{i1}$, $C_{i2}$, $C_{i3}$ from $\mathcal{F}_i$ such that $xy\in E(C_{1j})$ and $yz\in E(C_{2j})$ for $j\in [1,3]$. Then 
$$\cup_{i=1}^2\left((\mathcal{F}_i\setminus \{C_{i1}, C_{i2}, C_{i3}\})\cup \{C_{i1}\bigtriangleup C, C_{i2}\bigtriangleup C, C_{i3}\bigtriangleup C\}\right)$$
is a signed circuit $6$-cover of $G$, a contradiction. Therefore $\epsilon(G_{yz})=1$.

Since $C$ is balanced, $\epsilon(G_{yz}\cup C)=\epsilon(G_{yz})=1$. By  Claims \ref{le: 1-negative} and \ref{le: H-L(H) balanced}, each  $H\in \{G_{yz}, G_{yz}\cup C\}$  is equivalent to an $R_i$ for some $i\in \{0, 2, 3,4,5\}$. Therefore  $G_{yz}\cup C\sim R_i$ for some $i\in \{2, 4, 5\}$.  This completes the proof of the claim.
\end{proof}

%%%%%%%%%%%%%%%%%%%%%%%%%%%%%%%%%%%%%%%%%%%%%

\begin{claim}\label{cl: h=2}
Let $B_i=B_i(x_{i-1}, x_i)$ for $i\in [1,h]$ and $H=\mathcal{S}(B_1, \dots, B_h)$ be a piece of $G$ at $\{x_0, x_h\}$ such that $h=|\mathcal{B}(H)|\ge 2$, $\epsilon(G-E(H))\ge 1$,  and every $B_i\in \mathcal{B}_{2}(H)$ has a $\Psi_{x_{i-1}x_i}(2)$-cover. 
\begin{itemize}
\item[\rm (1)] If $\mathcal{B}_0(H)=\emptyset$, then either $H\sim D_2(x, y)$ in Fig. \ref{fig: replace}, or $h=2$ and $\mathcal{B}_2(H)\in \{\{B_1\}, \{B_2\}\}$. Furthermore, when $\mathcal{B}_2(H)=\{B_1\}$, the following statements hold. 
\begin{itemize}

\item[\rm (1a)] Every $\Psi_{x_0x_1}(2)$-cover of $B_1$ has a tadpole at $x_1$ containing $x_0$;

\item[\rm (1b)] $B_1$ has no $\Psi_{x_0x_1}(t)$-cover for some $t\in \{0, 1, 3\}$;

\item[\rm (1c)] If $B_1$ has a $\Psi_{x_0x_1}^*(2)$-cover and $e=x_0x_h\in E(G-E(H))$, then either $H\cup e$ has a $\Psi^*_{x_0x_2}(2)$-cover, or $H\cup e$ is equivalent to one of $R_2(y, x)$, $R_4(x, y)$, and  $R_5(x, y)$.
\end{itemize}

\item[\rm (2)] If $h\ge 3$, $\mathcal{B}_2(H)=\{B_{k}\}$ and $\mathcal{B}_0(H)=\{B_{k+1}\}$ for some $k\in [1, h-2]$, then   $B_{k}$ has no $\Psi^*_{x_{k-1}x_{k}}(2)$-cover and $B_{k}$ is not equivalent to $R_i$ for each $i\in \{2,4,5\}$.
\end{itemize}
\end{claim}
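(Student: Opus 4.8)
The plan is to argue inside the minimum counterexample $G$: whenever the asserted conclusion fails, produce either a strictly smaller coverable $K_4$-minor-free signed graph or a decomposition of $G$ into two coverable signed subgraphs, contradicting minimality. Throughout we use that $G$ has no $2$-vertex together with the reduction Lemmas~\ref{le: 2-sum-a-b}, \ref{le: one block}, \ref{le: two blocks}, Lemma~\ref{le: block-path}, Observation~\ref{obs: covers of R}, Proposition~\ref{prop: coverable} and Claims~\ref{le: easy observations}--\ref{le: balanced triangle}. For Part~(1), first note $\mathcal B_2(H)\neq\emptyset$, since otherwise $H$ is a path and an internal vertex of $H$ is a $2$-vertex of $G$ (it lies in no other part, $H$ being a piece). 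Then Lemma~\ref{le: block-path} applies to $H$ and returns exactly one of its three outcomes. If its outcome (2) or (3) holds, the unique block of $\mathcal B_2(H)$ is terminal and every other part is a single edge; were $h\ge3$ this would create an internal $2$-vertex of $G$, so $h=2$ and $\mathcal B_2(H)\in\{\{B_1\},\{B_2\}\}$, and outcome (2) is precisely (1a) (outcome (3) being its mirror image). If its outcome (1) holds, then $H$ has a $\Psi_{x_0x_h}(2)$-cover of the type required in Lemma~\ref{le: 2-sum-a-b}(2); writing $G=\mathcal P(H',H)$, the signed graph $\mathcal P(H',D_2(x_0,x_h))$ is $2$-connected and $K_4$-minor-free, and one verifies it is coverable, so if moreover $H\not\sim D_2(x_0,x_h)$ it is strictly smaller than $G$ and by minimality has a signed circuit $6$-cover, which Lemma~\ref{le: 2-sum-a-b}(2) lifts to one of $G$ --- impossible. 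Hence $H\sim D_2(x_0,x_h)$, the finitely many cases with $|E(H)|\le4$ (where the reduced graph need not be smaller) being checked directly.

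Now assume $\mathcal B_2(H)=\{B_1\}$, so by the above $h=2$, $B_2=x_1x_h$ is a single edge, and (1a) holds. Observe that $B_1$ is itself a piece of $G$ at $\{x_0,x_1\}$, and any $\Psi_{x_0x_1}(t)$-cover of $B_1$ extends --- prolong each $x_0x_1$-path with the edge $x_1x_h$, and read each tadpole at $x_1$ as a tadpole at $x_h$ --- to a $\Psi_{x_0x_h}(t)$-cover of $H$ in which no tadpole at $x_0$ meets $x_h$. For (1b): if $B_1$ had a $\Psi_{x_0x_1}(t)$-cover for every $t\in\{0,1,3\}$ then, with the $t=2$ cover from the hypothesis, $H$ would meet the hypothesis of Lemma~\ref{le: 2-sum-a-b}(1) at the source terminal $x_0$, so $H$ could be replaced by $D_1(x_0,x_h)$ to give a strictly smaller coverable graph --- a contradiction (the degenerate case $B_1\sim R_0$, i.e.\ $H\sim D_1$, being treated separately). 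For (1c): a $\Psi^{*}_{x_0x_1}(2)$-cover of $B_1$ forces $x_0x_1\in E(B_1)$, so $H\cup e$ with $e=x_0x_h$ contains the triangle $T=x_0x_1x_hx_0$. If $T$ is unbalanced, Lemma~\ref{le: one block} with $(x,y,z)=(x_0,x_1,x_h)$ and its ``$H$'' equal to $B_1$ gives a $\Psi^{*}_{x_0x_h}(2)$-cover of $\mathcal P(B_1\cup x_1x_h,\,x_0x_h)=H\cup e$. If $T$ is balanced, Claim~\ref{le: balanced triangle} places $T$ in a piece $\widetilde H\sim R_i$ for some $i\in\{2,4,5\}$, and comparing edge sets while using that $G$ has no $2$-vertex forces $\widetilde H=H\cup e$, whence $H\cup e\sim R_i$ with the terminals matched as stated.

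For Part~(2), the hypotheses and the absence of $2$-vertices pin down the shape of $H$: every part other than $B_k$ and the negative loop $B_{k+1}$ is a single edge, and a short degree count forces $(h,k)\in\{(3,1),(4,2)\}$; thus $H$ is $B_k$ with the loop $B_{k+1}$ attached at $x_k$, followed by an edge to $x_h$ and, when $k=2$, preceded by an edge from $x_0$. Suppose for contradiction that $B_k\sim R_i$ for some $i\in\{2,4,5\}$, or that $B_k$ has a $\Psi^{*}_{x_{k-1}x_k}(2)$-cover. Using Observation~\ref{obs: covers of R} when $B_k\sim R_i$, and using the negative loop $B_{k+1}$ to supply tadpoles at $x_k$ avoiding $x_{k-1}$ (in the manner of the proofs of Lemmas~\ref{le: one block}--\ref{le: two blocks}), one obtains a $\Psi_{x_{k-1}x_k}(t)$-cover of $B_k\cup B_{k+1}$ for every $t\in[0,3]$; appending the single-edge parts turns these into $\Psi_{x_0x_h}(t)$-covers of $H$ for every $t\in[0,3]$ in which no tadpole at $x_0$ meets $x_h$. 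Since $|E(H)|\ge4>|E(D_1)|$, Lemma~\ref{le: 2-sum-a-b}(1) replaces $H$ by $D_1(x_0,x_h)$ and produces a strictly smaller $K_4$-minor-free signed graph, which is coverable by Proposition~\ref{prop: coverable} (using $\epsilon(G-E(H))\ge1$ and the unbalanced $2$-circuit of $D_1$), so minimality yields a $6$-cover of $G$ --- a contradiction, proving (2).

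I expect the main obstacle to be (1c) in the balanced-triangle subcase: showing that the piece $\widetilde H$ produced by Claim~\ref{le: balanced triangle} is exactly $H\cup e$ rather than a proper supergraph, and tracking the terminal labels carefully enough to land specifically on $R_2(y,x)$, $R_4(x,y)$ or $R_5(x,y)$ (note the terminals are swapped for $R_2$ but not for $R_4$ and $R_5$). A recurring secondary difficulty is verifying, via Proposition~\ref{prop: coverable}, that the reduced graphs $\mathcal P(H',D_1)$ and $\mathcal P(H',D_2)$ are coverable, and disposing of the small base cases (notably $B_1\sim R_0$) where the reduction does not strictly decrease the edge count.
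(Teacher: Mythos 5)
Your treatment of the main statement of (1), of (1a), and of (1b) follows the paper's route (Lemma~\ref{le: block-path} for the structure, Lemma~\ref{le: 2-sum-a-b}(2) to force $H\sim D_2$, Lemma~\ref{le: 2-sum-a-b}(1) for (1b)), and your unbalanced-triangle branch of (1c) via Lemma~\ref{le: one block} is also the paper's. The genuine gap is exactly where you predicted it, in the balanced-triangle branch of (1c), and your proposed fix does not work. Claim~\ref{le: balanced triangle} only guarantees that \emph{some} piece of $G$ containing the balanced triangle $C=x_0x_1x_2x_0$ is equivalent to an $R_i$, and there are two candidate pieces: $B_1\cup C=H\cup e$ (a piece at $\{x_0,x_2\}$) and $H'\cup C$ (a piece at $\{x_0,x_1\}$, where $G=\mathcal P(H,H')$). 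No edge-set or $2$-vertex comparison rules out the second candidate: in the proof of Claim~\ref{le: balanced triangle} the piece produced is $G_{yz}\cup C$ where $G_{yz}$ is a side of the triangle with negativeness exactly $1$, and since both $B_1$ and $H'$ are unbalanced this side can perfectly well be $H'$ rather than $B_1$. The paper's proof of (1c) therefore contains a second branch that your argument omits entirely: when $H'\cup C\sim R_i$, one reads off (using that $x_1$ is a $2$-vertex of $H'\cup C$) that $H'\sim R_j(x,y)$ for some $j\in\{0,2,3\}$, and then Lemma~\ref{le: two blocks} together with Observation~\ref{obs: covers of R}(2) produces a signed circuit $6$-cover of $G=\mathcal P(B_1\cup x_1x_2,\,H')$ itself, contradicting that $G$ is a counterexample. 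So the conclusion ``$\tilde H=H\cup e$'' is true only a posteriori, because the alternative kills $G$; it is not forced by local structure, and without this branch (1c) is not proved.

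A secondary issue is part (2). The paper only needs a $\Psi_{x_{k-1}x_{k+1}}(2)$-cover of $B_k\cup B_{k+1}$ in which no tadpole at either end contains the other, extends it across the single-edge parts (using $B_h\in\mathcal B_1(H)$ since $k\le h-2$), and applies Lemma~\ref{le: 2-sum-a-b}(2). You instead claim $\Psi(t)$-covers of $B_k\cup B_{k+1}$ for \emph{every} $t\in[0,3]$ so as to invoke Lemma~\ref{le: 2-sum-a-b}(1); producing the $t=0,1,3$ covers from a $\Psi^*(2)$-cover of $B_k$ plus one negative loop is not routine (for instance $t=3$ requires three tadpoles at $x_{k-1}$ and none at $x_k$ while the loop must still be covered six times), and you give no construction. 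The $t=2$ statement the paper uses is weaker and much easier to verify. Finally, the degenerate case $B_1\sim R_0$ in (1b), which you flag but set aside, deserves an actual resolution, since $R_0$ does admit $\Psi_{xy}(t)$-covers for all $t\in[0,3]$.
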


\begin{proof}
Let $H'$ be a piece of $G$ at $\{x_0, x_2\}$ such that $G=\mathcal{P}(H, H')$. Then $E(H')=E(G-E(H))$.

(1) Assume  $H\not\sim D_2(x, y)$. Since $\mathcal{B}_0(H)=\emptyset$ and $\epsilon(H')\ge 1$, if $H$ has a $\Psi_{x_0x_h}(2)$-cover in which no tadpole at $x_0$ (resp., $x_h$) contains $x_h$ (resp., $x_0$), then $|E(H)|\ge 5$ and so it follows from  the minimality of $G$ and Lemma \ref{le: 2-sum-a-b}-(2) that $G$ has a signed circuit $6$-cover, a contradiction. Hence $H$ has no such $\Psi_{x_0x_h}(2)$-cover. By Lemma \ref{le: block-path}, $h=2$ and $\mathcal{B}_2(H)\in \{\{B_1\}, \{B_2\}\}$.  

Assume  $\mathcal{B}_1(H_1)=\{B_1\}$. Clearly (1a)  follows from Lemma \ref{le: block-path} and  (1b)  follows from Lemma \ref{le: 2-sum-a-b}-(1). 

We now prove (1c). Suppose to the contrary that $H\cup e$ has no $\Psi^*_{x_0x_2}(2)$-cover and $H\cup e$ is  not equivalent to any of $R_2(y, x)$, $R_4(x, y)$, and  $R_5(x, y)$.   Furthermore since $x_2$ is a $2$-vertex of $H\cup e$, $H\cup e\not\sim R_i$ for each $i\in \{2, 4, 5\}$. Since $B_1$ has a $\Psi^*_{x_0x_1}(2)$-cover, $x_0x_1\in E(B_1)$ by the definition. With  some switchings, assume that $x_0x_1$ is positive. By Lemma \ref{le: one block}, $C=x_0x_1x_2x_0$ is a balanced $3$-circuit. Note that $x_1$ is a $2$-vertex of $H'\cup C$. By Claim \ref{le: balanced triangle}, $(H'\cup C)(x_0, x_1)\sim R_2(y, x)$ or $R_i(x, y)$ for some $i\in \{4,5\}$, and thus $H'\sim R_i(x, y)$ for some $i\in \{0, 2, 3\}$. Since $G=\mathcal{P}(B_1\cup x_1x_2, H')$, by Lemma \ref{le: two blocks} and Observation \ref{obs: covers of R}-(2), $G$ has a signed circuit $6$-cover, a contradiction. This proves (1c).

\medskip
(2) Suppose to the contrary that either $B_{k}$ has an $\Psi^*_{x_{k-1}x_{k}}(2)$-cover or $B_{k}\sim R_i$ for some $i\in \{2, 4, 5\}$. Since  $B_{k+1}$ is a negative loop at $x_{k}$ ($=x_{k+1}$),  $B_{k}\cup B_{k+1}$ has a $\Psi_{x_{k-1}x_{k+1}}(2)$-cover in which no tadpole at $x_{k+1}$  contains $x_{k-1}$. Since $k\leq h-2$, we have $B_h\in \mathcal{B}_1(H)$. Thus  $H$ has a $\Psi_{x_0x_h}(2)$-cover in which no tadpole at $x_{0}$ (resp, $x_h$) contains $x_{h}$ (resp., $x_0$). Since $\epsilon(H')\ge 1$,  by Lemma \ref{le: 2-sum-a-b}-(2) and the minimality of $G$, $G$ has a signed circuit $6$-cover, a contradiction. 
This prove  (2) and thus completes the proof of the claim.
\end{proof}

%%%%%%%%%%%%%%%%%%%%%%%%%%%%%%%%%%%%%%%%%%%%
\begin{claim}\label{le: a-b-covers and 2-2-covers}
Suppose that  $G=\mathcal{P}(H_1, H_2, H_3)$ where each $H_i=H_i(x, y)$ and  $\epsilon(H_3)\ge 1$. If $H=H_1\cup H_2\not\sim R_i$ for any $i\in \{0, 2, 4, 5\}$ and contains no negative loop at $\{x, y\}$, then the following statements hold. 
\begin{itemize}

\item[\rm (1)] If either $H_1\sim D_1(x, y)$ or $H_2\sim D_1(x, y)$, then $H$ has a $\Psi_{xy}(t)$-cover for each $t\in [0,3]$, where $D_1(x, y)$ is the two-terminal signed graph in Fig. \ref{fig: replace}.

\item[\rm (2)] If $xy\in E(H)$, then $H$ has a $\Psi_{xy}^*(2)$-cover.

\item[\rm (3)] If $xy\notin E(H)$ and neither $H_1$ nor $H_2$ is equivalent to $D_1(x, y)$, then $H$ has a $\Psi_{xy}(2)$-cover in which no tadpole at $y$ contains $x$. 
\end{itemize}
\end{claim}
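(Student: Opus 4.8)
The plan is to exploit the fact that $H = H_1 \cup H_2$ is a $2$-connected $K_4$-minor-free piece of $G$ with $\epsilon(H_3) \ge 1$, so that $H$ itself is small: by the minimality of $G$ it cannot contain a proper coverable sub-piece, and by Claims~\ref{le: easy observations}, \ref{le: 1-negative}, \ref{le: H-L(H) balanced} and \ref{le: balanced triangle} together with Claim~\ref{le: t<4}, each of $H_1$ and $H_2$ is either a single ($\pm$) $K_2$, or equivalent to some $R_i$ ($i \in [0,5]$), or (being a non-$2$-connected piece) decomposes as a short series connection of such gadgets. The hypothesis excludes $H \sim R_i$ for $i \in \{0,2,4,5\}$ and forbids negative loops at $\{x,y\}$, which rules out most degenerate configurations. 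So first I would enumerate, up to equivalence, the finitely many signed graphs $H$ that arise as $\mathcal{P}(H_1, H_2)$ under these constraints — essentially parallel connections of two pieces each drawn from $\{R_0, R_1, R_2, R_3, R_4, R_5\}$ and their series refinements, with the excluded cases deleted. This case list is short because $K_4$-minor-freeness and $2$-connectedness bound the number of parallel strands ($t_G(x,y) \le 3$ by Claim~\ref{le: t<4}, but here only two strands $H_1, H_2$ are relevant inside $H$) and bound the internal complexity of each strand.

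Next, for part~(1), when one strand is $D_1(x,y)$ (the path $x\,e_1\,e_2\,y$ with a negative loop $e_3$ at the middle vertex), I would produce explicit $\Psi_{xy}(t)$-covers for every $t \in [0,3]$ by combining a $\Psi$-type cover of the other strand with the three obvious signed subgraphs supported on $D_1$ — the tadpole $e_1 \cup e_3$ at $x$, the tadpole $e_2 \cup e_3$ at $y$, and the positive or negative path $e_1 \cup e_2$ — in the right multiplicities; Observation~\ref{obs: covers of R} supplies the needed covers of the $R_i$ strands and Lemma~\ref{le: four paths}/Lemma~\ref{le: block-path} handle strands that are series connections. For part~(2), when $xy \in E(H)$, the edge $xy$ is itself one strand (a $\pm K_2$) and the other strand is one of the listed gadgets not excluded by the hypothesis; in each such case one checks directly — this is a finite verification — that $H$ admits a $\Psi_{xy}^*(2)$-cover, using the definition of $\Psi^*$ (two tadpoles at each terminal, one of each pair missing the opposite terminal and the other running through the edge $xy$). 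Part~(3) is the residual case $xy \notin E(H)$ with neither strand equal to $D_1$; here again the enumeration is short, and for each surviving $H$ one exhibits a $\Psi_{xy}(2)$-cover whose tadpoles at $y$ avoid $x$, typically by routing tadpoles into the ``deep'' part of one strand.

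I expect the main obstacle to be the bookkeeping of the case analysis rather than any single hard idea: one must be careful that the enumeration of admissible $H = \mathcal{P}(H_1, H_2)$ is genuinely exhaustive once all the structural claims are invoked, and that the stated exclusions ($H \not\sim R_0, R_2, R_4, R_5$ and no negative loop at $\{x,y\}$) really do eliminate every configuration for which a $\Psi$-cover of the required flavor would fail — e.g. configurations where both strands are ``short'' in the same way and force a tadpole at $y$ through $x$. A secondary subtlety is that a strand $H_i$ may itself be a series connection of two or three gadgets (since $H_i$ need not be $2$-connected), so before the enumeration I would first apply Lemma~\ref{le: block-path} and Claim~\ref{cl: h=2} to reduce each such strand to one of the standard forms $R_j$ or $D_k$, or to conclude that the strand supplies a $\Psi_{xy}(t)$-cover for all $t \in [0,3]$, after which the parallel-connection bookkeeping via Lemma~\ref{le: four paths} is routine.
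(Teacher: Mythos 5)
Your strategy rests on the premise that $H = H_1 \cup H_2$ can be reduced to a finite list of gadgets built from $R_0,\dots,R_5$ and $D_1,D_2$ and then verified case by case. That premise is false, and this is the central gap. Claim~\ref{le: 1-negative} forces a $2$-connected piece to be equivalent to some $R_i$ only when its negativeness equals $1$; a $2$-connected part $B$ of a strand with $\epsilon(B)\ge 2$ is coverable but can be arbitrarily large, and the minimality of $G$ does not exclude it: $G$ fails to decompose into two coverable subgraphs only when the \emph{complement} of the coverable piece is also coverable, and a configuration with $\epsilon(B)\ge 2$ while the rest of $G$ has negativeness $1$ is perfectly possible. (Your assertion that ``by the minimality of $G$ it cannot contain a proper coverable sub-piece'' is not what the minimality gives.) So ``the finitely many signed graphs $H$'' is not a finite list, and the finite verification you propose for parts (1)--(3) cannot be carried out.

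The paper's proof handles this with a second minimal-counterexample argument that your proposal lacks: it takes $H$ to be a counterexample to the claim with $|E(H)|$ minimum, writes each strand $H_i$ as a series connection of parts, and applies the claim \emph{itself} to each $2$-connected part $B\in\mathcal{B}_2(H_1)\cup\mathcal{B}_2(H_2)$ (which is again a parallel connection of at most three two-terminal pieces by Claim~\ref{le: t<4}) to conclude that $B$ admits a $\Psi(2)$-cover unless $B\sim R_i$ for some $i\in\{0,2,4,5\}$, in which case Observation~\ref{obs: covers of R} supplies one. Lemma~\ref{le: four paths} then assembles these into subgraph $6$-covers $\mathcal{F}_1^*,\mathcal{F}_2^*$ of the two strands (four paths, four tadpoles, plus circuits), and the bulk of the proof is the explicit recombination of $\mathcal{F}_1^*$ and $\mathcal{F}_2^*$ into the required $\Psi_{xy}(t)$- or $\Psi^*_{xy}(2)$-covers, with Claim~\ref{cl: h=2} invoked exactly in the degenerate situations where a tadpole is forced through the wrong terminal. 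Your closing paragraph correctly gestures at Lemma~\ref{le: block-path} and Claim~\ref{cl: h=2} for series strands, but without the induction on $|E(H)|$ there is no source for the $\Psi(2)$-covers of the large $2$-connected parts, and without those covers the parallel-connection bookkeeping has nothing to glue.
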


\begin{proof}
Suppose that $H$ is a counterexample to the claim with minimum $|E(H)|$. Recall that $G$ is $2$-connected and contains no positive loop. By the definition, let $B_i=B_i(x_{i-1}, x_i)$, $i\in [1,s]$ such that 
$$
H_{1}(x, y)=\mathcal{S}(B_{1}, \dots, B_{h})=B_1\cup \cdots \cup B_h,\ \ H_{2}(y, x)=\mathcal{S}(B_{h+1}, \dots, B_{s})=B_{h+1}\cup \cdots \cup B_{s}
$$
and $s$ is maximum with this property, where $x=x_0=x_s\in V(B_{1})\cap V(B_{s})$ and $y=x_h\in V(B_{h})\cap V(B_{h+1})$. Then, for any $B\in \mathcal{B}_2(H_1)\cup \mathcal{B}_2(H_2)$ with terminals $\{u, v\}$, $B$ is $2$-connected by the maximality of $s$, and $B-L(B)$ is unbalanced by Claim \ref{le: H-L(H) balanced}. Furthermore, it follows from the minimality of $H$ that $B$ has either a $\Psi_{uv}(t)$-cover for each $t\in [0, 3]$, or a $\Psi_{uv}^*(2)$-cover, or a $\Psi_{uv}(2)$-cover in which no tadpole at  $v$ contains $u$, unless $B\sim R_i$ for some $i\in \{0, 2, 4, 5\}$. By this fact and Observation \ref{obs: covers of R}, $B$ has a $\Psi_{u,v}(2)$-cover.

We will find  a desired $\Psi_{xy}(2)$-cover of $H$, contradicting that $H$ is a counterexample to the claim. To do this, when $H_i$, $i\in [1,2]$, is not a single edge (that is, $|\mathcal{B}_0(H_i)| + |\mathcal{B}_2(H_i)|\ge 1$), we apply Lemma \ref{le: four paths} to construct a signed subgraph $6$-cover $\mathcal{F}_i^*$ of $H_i$ as follows:

$$\mathcal{F}_i^*=\mathcal{F}_{i0}\cup 2\mathcal{B}_0(H_i) \cup \{P_{i1}, P_{i2}, P_{i3}, P_{i4}\}\cup \{T_{i1}, T_{i2}, T_{i3}, T_{i4}\},$$
 where 
\begin{itemize}
\item[$\rhd$] $\mathcal{F}_{i0}$ is a subfamily of signed circuits of $H_i$; 

\item[$\rhd$]   $P_{i1}$ and $P_{i2}$ (resp., $P_{i3}$ and $P_{i4}$) are two positive (resp., negative) $xy$-paths of $H_i$ if $|\mathcal{B}_2(H_i)|\ge 1$, and otherwise $P_{i1}=P_{i2}=P_{i3}=P_{i4}=H_i-\mathcal{B}_0(H_i)$; 

\item[$\rhd$] $T_{i1}, T_{i2}$ (resp., $T_{i3},T_{i4}$) are two tadpoles of $H_i$ at $x$ (resp., $y$) such that the unbalanced circuit in $T_{i(2i-1)}$ (resp., $T_{i(2i)}$, $T_{i(5-2i)}$, $T_{i(6-2i)}$) is in the part in $\mathcal{B}_0(H_i)\cup \mathcal{B}_2(H_i)$ with minimum (resp., minimum, maximum, maximum) subscript.

 \end{itemize}
Note that $P_{11}\cup P_{21}$ is a circuit and every part in $\mathcal{B}_0(H_1)\cup \mathcal{B}_0(H_2)$ is a negative loop. When $\mathcal{B}_0(H_1)\cup \mathcal{B}_0(H_2)\neq \emptyset$, the signed graph $P_{11}\cup P_{21}\cup \mathcal{B}_0(H_1)\cup \mathcal{B}_0(H_2)$ has a family 
$$
\mathcal{C}_0\cup \{T_{1}', T_{2}'\}
$$
which covers $P_{11}\cup P_{21}$ once and $\mathcal{B}_0(H_1)\cup \mathcal{B}_0(H_2)$ twice, where $\mathcal{C}_0$ is a set of barbells and $T_{1}', T_{2}'$ are two tadpoles at $x$.

\medskip 
(1) WLOG, assume that $H_2=D_1(x, y)$. Then $h\ge 2$ since $H\not\sim R_2$. Let $t\in [0, 3]$.  

If $\mathcal{B}_2(H_1)=\emptyset$, then $H_1=xx_1y\cup L_{x_1}$ by Claim \ref{le: easy observations}-(6), and thus it  is easy to check that  $H=H_1\cup H_2$ has a $\Psi_{xy}(t)$-cover. 

If $h=2$ and $B_2\in \mathcal{B}_1(H_1)$, then $\mathcal{B}_0(H_1)=\emptyset$ and $\mathcal{B}_2(H_1)=\{B_1\}$. By (1a) and (1b) of Claim \ref{cl: h=2}, $B_1$ has a $\Psi_{x_0x_1}^*(2)$-cover and so    $H$ has a $\Psi_{xy}(t)$-cover by Lemma \ref{le: two blocks}-(1). 

Next assume that either $h\ge 3$ and $\mathcal{B}_2(H_1)\neq \emptyset$, or $h=2$ and $B_2\in \mathcal{B}_2(H_1)$. Then $x\notin V(T_{13})\cup V(T_{14})$. 
We construct a family $\mathcal{F}^*$ as follows. 
\begin{flalign*}
\mathcal{F}^*&=  \mathcal{F}_{10}\cup \mathcal{F}_{20}\cup \{P_{11}\cup P_{21}, T_{11}\cup T_{21}\}\\
& \cup \left\{
\begin{array}{ll}
\{P_{14}\cup P_{24}, T_{12}\cup T_{22}\}\cup \{P_{12}\cup P_{23}, P_{13}\cup P_{22}, T_{13}, T_{14}, T_{23}, T_{24}\} & \mbox{if $t=0$};\\
\{P_{14}\cup P_{24}, T_{12}\cup T_{22}\}\cup \{P_{12}, P_{23}\}\cup \{P_{13}\cup P_{22}, T_{13}, T_{14}, T_{23},  T_{24}\} & \mbox{if $t=1$};\\
\{P_{14}\cup P_{24}, T_{13}\cup T_{23}\}\cup \{P_{12}, P_{22}, P_{13}, P_{23}\}\cup \{T_{12}, T_{22}, T_{14}, T_{24}\} & \mbox{if $t=2$};\\
\{T_{12}\cup T_{22}\}\cup \{P_{12}, P_{22}, P_{24}\bigtriangleup B_s, P_{13}, P_{14}, P_{23}\}\cup \{B_s, T_{13}\cup P_{24}, T_{14}\cup P_{24}\} & \mbox{if $t=3$}.
\end{array}
\right.
\end{flalign*}
When $|\mathcal{B}_0(H_1)|=0$, let $\mathcal{F}=\mathcal{F}^*$. When $\mathcal{B}_0(H_1)=\{B_i\}$ for some $i\in [2,h-1]$,  let $\mathcal{C}_0=\emptyset$ and 
$$
\mathcal{F}=  
\left\{
\begin{array}{ll}
\left(\mathcal{F}^*\setminus \{P_{11}\cup P_{21}, P_{14}\cup P_{24}\}\right) \cup \{P_{11}\cup P_{24}\cup B_i, P_{14}\cup P_{21}\cup B_i\} & \mbox{if $t\in [0,2]$};\\
\left(\mathcal{F}^*\setminus \{P_{11}\cup P_{21}, B_s\}\right)\cup \{B_s\cup T_{1}'\} \cup \{T_{2}'\} & \mbox{if $t=3$}.
\end{array}
\right.
$$
When $|\mathcal{B}_0(H_1)|\ge 2$, let $\mathcal{F}=  \left(\mathcal{F}^*\setminus \{P_{11}\cup P_{21}\}\right) \cup \mathcal{C}_0\cup \{T_{1}'\cup T_{2}'\}.$
In each case, one can easily check that $\mathcal{F}$ is a $\Psi_{xy}(t)$-cover of $H$ by the structure of $H_2=D_1(x, y)$.

\medskip
 (2) WLOG, assume that $H_2=xy$ is positive. Then $h\ge 2$ since $H\not\sim R_0$.
 
If $\mathcal{B}_2(H_1)=\emptyset$,  then $H_1=xx_1y\cup L_{x_1}$ by Claim \ref{le: easy observations}-(6). Thus  $H=xx_1yx\cup L_{x_1}$ is a short barbell by Claim \ref{le: H-L(H) balanced} and  has a $\Psi_{xy}^*(2)$-cover. 

If $\mathcal{B}_0(H_1)=\emptyset$, then by Claim \ref{cl: h=2}-(1), either $H_1\sim D_2(x, y)$ in Fig. \ref{fig: replace} or $h=2$ and $\mathcal{B}_2(H_1)=\{B_1\}$ or $\{B_2\}$. For the former,  $H\sim R_3$ and thus has a $\Psi_{xy}^*(2)$-cover. For the latter, by the symmetry, assume that $\mathcal{B}_2(H_1)=\{B_1\}$, and then $B_1$ has a $\Psi^*_{x_0x_1}(2)$-cover by (1a) and (1b) of Claim \ref{cl: h=2}. Since $H\not\sim R_i$ for each $i\in \{2,4,5\}$, $H$ has a $\Psi_{xy}^*(2)$-cover by (1c) of Claim \ref{cl: h=2}.

Now we  assume that $\mathcal{B}_2(H_1)\neq \emptyset$ and $\mathcal{B}_0(H_1)\neq \emptyset$. Then $h\ge 3$. Let $B_{k}$ (resp.,  $B_{\ell}$) be the part in $\mathcal{B}_2(H_1)\cup \mathcal{B}_0(H_1)$ with minimum (resp., maximum) subscript.

If $V(B_{k})\cap V(B_{\ell})=\emptyset$, then by the choice of $\mathcal{F}^*_1$, $V(T_{11})\cap V(T_{13})=\emptyset$. Thus $T_{11}\cup \{xy\}\cup T_{13}$ is a barbell. Therefore, the family
$$
\mathcal{F}_{10}\cup \mathcal{C}_0\cup \{P_{12}\cup xy, T_{11}\cup xy\cup T_{13}\}\cup \{xy, xy, P_{13}, P_{14}\}\cup \{T_{1}', T_{2}', T_{12}\cup xy, T_{14}\}
$$
is a $\Psi_{xy}^*(2)$-cover of $H$.

 If $V(B_{k})\cap V(B_{\ell})\neq \emptyset$, then either $\mathcal{B}_2(H_1)=\{B_{k}, B_{k+2}\}$ and $\mathcal{B}_0(H_1)=\{B_{k+1}\}$, or $\mathcal{B}_2(H_1)\cup \mathcal{B}_0(H_1)=\{B_{k}, B_{k+1}\}$. In the former case, by the proof of Lemma \ref{le: four paths}, there are $4$ negative $x_0x_h$-paths $P_{11}', P_{12}', P_{13}', P_{14}'$ in $H_1$ such that $
\left(\mathcal{F}_1^*\setminus \{P_{11}, P_{12}, P_{13}, P_{14}\}\right)\cup \{P_{11}', P_{12}', P_{13}', P_{14}'\}$ is a signed subgraph $6$-cover of $H_1$ and hence the family
$$
\mathcal{F}_{10}\cup \{P_{11}'\cup xy\cup B_{k+1}, P_{12}'\cup xy\cup B_{k+1}\}\cup \{xy, xy, P_{13}', P_{14}'\}\cup \{T_{11}, xy\cup T_{13}, yx\cup T_{12}, T_{14}\}
$$
is a $\Psi_{xy}^*(2)$-cover of $H$. In  the latter case, assume that $\mathcal{B}_2(H_1)=\{B_{k}\}$ and $\mathcal{B}_0(H_1)=\{B_{k+1}\}$ by the symmetry. Then $k=h-2\in [1,2]$ since $H$ has no negative loop at $x_h$ and $G$ contains no $2$-vertex. By Claim \ref{cl: h=2}-(2), $B_{k}$ has no $\Psi_{x_{k-1}x_{k}}^*(2)$-cover and $B_{k}\not\sim R_i$ for each $i\in \{2, 4, 5\}$. Hence $k=1$; otherwise $B_2\cup B_1$ is a piece of $G$ at $\{x_2, x_0\}$ and thus, by 1a) and 1b) of Claim \ref{cl: h=2}, $B_1$ has a $\Psi_{x_{2}x_{1}}^*(2)$-cover, a contradiction. Since $\mathcal{B}_0(H_1)=\{B_2\}$ and $H_2\cup H_3$ is unbalanced, $G-E(B_1)$ is coverable. Hence $B_1$ is not coverable. By Claim \ref{le: 1-negative}, $B_1=R_0$ and thus $H-L_{x_1}\sim R_2(y, x)$. Since $H$ has a unique balanced $3$-circuit $C=x_0x_1x_2x_0$, by Claim \ref{le: balanced triangle}, $C\cup H_3\sim R_i$ for some $i\in \{2, 4, 5\}$. Therefore, one can easily check that $G=(H-E(C))\cup (C\cup H_3)$ has a signed circuit $6$-cover, a contradiction.

\medskip
(3) Since $xy\notin E(H)$, both $H_1$ and $H_2$ contain cut vertices by Claim \ref{le: t<4}, and so $h\ge 2$ and $s-h\ge 2$. If $|\mathcal{B}_2(H_1)|=|\mathcal{B}_2(H_2)|=0$, then $H=x_0x_1x_2x_3x_0\cup \{L_{x_1}, L_{x_3}\}$ by Claim \ref{le: easy observations}-(6) and $H-L(H)$ is unbalanced by Claim \ref{le: H-L(H) balanced}. Thus one can easily find a desired $\Psi_{xy}(2)$-cover, a contradiction. Hence $|\mathcal{B}_2(H_1)|+|\mathcal{B}_2(H_2)|\ge 1$ and, when $|\mathcal{B}_2(H_i)|=0$, we may assume that $H_i-L(H_i)$ is positive (with possible switchings).

By the construction, we can choose $\mathcal{F}_1^*$ and $\mathcal{F}_2^*$ such that $y\notin V(T_{i1})\cap V(T_{i2})$ and $x\notin V(T_{i3})\cap V(T_{i4})$ for each $i\in [1,2]$; otherwise, if either $y\in \cup_{i=1}^2(V(T_{i1})\cap V(T_{i2}))$ or $x\in \cup_{i=1}^2(V(T_{i3})\cap V(T_{i4}))$, say $y\in V(T_{11})\cap V(T_{12})$, then $(\mathcal{B}_0(H_1), \mathcal{B}_1(H_1), \mathcal{B}_2(H_1))=(\emptyset, \{B_1\}, \{B_2\})$ and for every $\Psi_{x_1x_2}(2)$-cover of $B_2$, both its tadpoles at $x_1$ contain $x_2$, contradicting that $B_2$ has a $\Psi_{x_2x_1}^*(2)$-cover by (1a) and (1b) of Claim \ref{cl: h=2}. Therefore, WLOG, assume that $y\notin V(T_{11})\cup V(T_{21})$ and  $x\notin V(T_{14})\cup V(T_{24})$.

 If $x\notin V(T_{13})$ or $x\notin V(T_{23})$, say $x\notin V(T_{13})$, since $|\mathcal{B}_2(H_1)|+|\mathcal{B}_2(H_2)|\ge 1$, the family
\begin{flalign*}
\mathcal{F} &= \mathcal{F}_{10}\cup \mathcal{F}_{20} \cup \{T_{12}\cup T_{21}, T_{13}\cup T_{23}\}\\
& \cup 
\left\{
\begin{array}{ll}
\mathcal{C}_0\cup \{T_{11}\cup T_{22}\}\cup \mathcal{P}\cup \{T_{1}', T_{2}', T_{14}, T_{24}\} & \mbox{if $\mathcal{B}_0(H_1)\cup \mathcal{B}_0(H_2)\neq \emptyset$};\\
\{P_{11}\cup P_{21}\}\cup \mathcal{P}\cup \{T_{11}, T_{22}, T_{14}, T_{24}\} & \mbox{if $\mathcal{B}_0(H_1)\cup \mathcal{B}_0(H_2)= \emptyset$}.
\end{array}
\right.
\end{flalign*}
is a desired $\Psi_{xy}(2)$-cover, where $\mathcal{P}=\{P_{12}, P_{13}, P_{22}, P_{23}\}\cup \{P_{14}\cup P_{24}\}$ if $|\mathcal{B}_2(H_1)|\ge 1$ and $|\mathcal{B}_2(H_2)|\ge 1$, and $\mathcal{P}=\{P_{13}, P_{14}, P_{23}, P_{24}\}\cup \{P_{12}\cup P_{22}\}$ otherwise. 

If $x\in V(T_{13})\cap V(T_{23})$, then for each $i\in [1,2]$, $(\mathcal{B}_0(H_i), \mathcal{B}_1(H_i), \mathcal{B}_2(H_i))=(\emptyset, \{B_{i+1}\}, \{B_{3i-2}\})$, and both $B_1$ and $B_4$ have $\Psi_{x_{j-1}x_j}^*(2)$-covers by (1a) and (1b) of Claim \ref{cl: h=2}. Therefore $H$ has a desired $\Psi_{xy}(2)$-cover by Claim \ref{le: two blocks}-(2).   This completes the proof of the claim. \end{proof}
%%%%%%%%%%%%%%%%%%%%%%%%%%%%%%%%%%%%%%%

\subsection{The final step}

 Since $\overline{G-L(G)}$ is $2$-connected, loopless, $K_4$-minor-free, and of minimum degree at least $3$,  it contains a $2$-circuit, denoted by $C_1=x_0x_1x_0$. Let $C_2$ be the circuit of $G-L(G)$ corresponding to $C_1$ and let 
$$B_1=C_2\cup \{L_z\in L(G) : z\in V(C_2)\setminus \{x_0, x_1\}\}.$$
Obviously, $B_1$ is a $2$-connected piece of $G$ at $\{x_0, x_1\}$. 
By Claims \ref{le: H-L(H) balanced} and  \ref{le: easy observations}-(6), $C_2=B_1-L(B_1)$ is an unbalanced circuit of length $2$ or $3$ or $4$, denoted by  $x_0x_1x_0$ or $x_0zx_1x_0$ or  $x_0z_1x_1z_2x_0$ depending on its length. Hence $B_1=x_0x_1x_0$ or $B_1=x_0zx_1x_0\cup L_{z}$ or $B_1=x_0z_1x_1z_2x_0\cup \{L_{z_1}, L_{z_2}\}$. 
In each case, $B_1$ has a $\Psi_{x_0x_1}(2)$-cover 
$$\mathcal{F}_1^*=\mathcal{F}_{10}\cup \{P_{11}, P_{12}, P_{13}, P_{14}\}\cup \{T_{21}, T_{22}, T_{23}, T_{24}\},$$ where $\mathcal{F}_{10}$ consists of signed circuits, $P_{11}$ and $P_{12}$ (resp., $P_{13}$ and $P_{14}$) are two positive (resp., negative) $x_0x_1$-paths, and $T_{11}$ and $T_{12}$ (resp., $T_{13}$ and $T_{14}$) is two tadpoles  at $x_0$ (resp., $x_1$).

Let $H=H(x_0, x_1)$ such that $G=\mathcal{P}(B_1, H)$. Choose $B_i=B_i(x_{i-1}, x_i)$, $i\in [2, s]$, such that  
$$H(x_1, x_0)=\mathcal{S}(B_2, B_3, \cdots, B_s)=B_2\cup B_3\cup \cdots \cup B_s$$
and $s$ is maximum with this property, where $x_1\in V(B_2)$ and $x_s=x_0\in V(B_s)$. Then $|\mathcal{B}_2(H)|\ge 1$; otherwise, by Claim \ref{le: easy observations}-(6), $H-L(H)$ is a positive or negative path with length $1$ or $2$, and thus one can easily find a signed circuit $6$-cover of $G$, a contradiction. Furthermore, $|\mathcal{B}_1(H)|+|\mathcal{B}_2(H)|\ge 2$ by Claim \ref{le: t<4}, and every $B_i\in \mathcal{B}_2(H)$ has a $\Psi_{x_{i-1}x_i}(2)$-cover by Claim \ref{le: a-b-covers and 2-2-covers}. Applying Lemma \ref{le: four paths}, we pick a signed subgraph $6$-cover $\mathcal{F}_2^*$ of $H$ as follows:
$$
\mathcal{F}_2^*=\mathcal{F}_{20}\cup 2\mathcal{B}_0(H)\cup \{P_{21}, P_{22}, P_{23}, P_{24}\}\cup \{T_{21}, T_{22}, T_{23}, T_{24}\},$$
 where $\mathcal{F}_{20}$ is a family of signed circuits, $P_{21}$ and $P_{22}$ (resp., $P_{23}$ and $P_{24}$) are two positive (resp., negative) $x_0x_1$-paths, $T_{21}$ and $T_{22}$ (resp., $T_{23}$ and $T_{24}$) are two tadpoles in $H$ at $x_0$ (resp., $x_1$)  whose unbalanced circuit is in the part in $\mathcal{B}_0(H)\cup \mathcal{B}_2(H)$ with maximum (resp., minimum) subscript.

Let  $U=\bigcap_{B\in \mathcal{B}_0(H)\cup \mathcal{B}_2(H)} V(B)$.  We first show $U\cap \{x_0, x_1\}=\emptyset$.  Otherwise $x_0\notin V(T_{23})\cap V(T_{24})$ and $x_1\notin V(T_{21})\cap V(T_{22})$.  Thus  the family
\begin{flalign*}
& \mathcal{F}_{10}  \cup \mathcal{F}_{20}  \cup \{P_{12}\cup P_{22}, P_{13}\cup P_{23}\}\cup \{T_{11}\cup T_{21}, T_{12}\cup T_{22}, T_{13}\cup T_{23}, T_{14}\cup T_{24}\}\\
 \cup  & 
\left\{
\begin{array}{ll}
\{P_{14}\cup P_{24}\}\cup \mathcal{C}_0 & \mbox{if } |\mathcal{B}_0(H)|\neq 1;\\
\{P_{11}\cup P_{24}\cup \mathcal{B}_0(H), P_{14}\cup P_{21}\cup \mathcal{B}_0(H)\} & \mbox{if $|\mathcal{B}_0(H)|=1$}
\end{array}
\right.
\end{flalign*}
is a  signed circuit $6$-cover of $G$, where $\mathcal{C}_0$ is a family of signed circuits of $P_{11}\cup P_{21}\cup \mathcal{B}_0(H)$ which covers $P_{11}\cup P_{21}$ once and $\mathcal{B}_0(H)$ twice, a contradcition. Hence $U\cap \{x_0, x_1\}\neq \emptyset$. 

WLOG, assume that $x_0\in U$. Then $\mathcal{B}_1(H)=\{B_2\}=\{x_1x_2\}$, $\mathcal{B}_2(H)=\{B_3\}$ and $\mathcal{B}_0(H)\in \{\emptyset, \{B_4\}\}$ since $|\mathcal{B}_1(H)|+|\mathcal{B}_2(H)|\ge 2$ and $|\mathcal{B}_2(H)|\ge 1$. Hence $G=B_4\cup \mathcal{P}(B_1\cup x_1x_2, B_3)$.

Note that $x_0z_1x_1z_2x_0\cup \{L_{z_1}, L_{z_2}\}$ has a $\Psi_{x_0x_1}(2)$-cover in which no tadpole at $x_1$ contains $x_0$. Since $B_1\cup x_1x_2$ is a piece of $G$ at $\{x_0,x_2\}$, by (1a) of Claim \ref{cl: h=2}, we have either $B_1=x_0x_1x_0$ or $x_0zx_1x_0\cup \{L_z\}$.  Since $B_3\cup x_2x_1$ is a piece of $G$ at $\{x_0, x_1\}$, it follows from Claim \ref{le: a-b-covers and 2-2-covers} and (1a) and (1b) of Claim \ref{cl: h=2} that either $B_3$ has a $\Psi_{x_0x_1}^*(2)$-cover or $B_3=B_3(x_0, x_2)\sim R_i(x, y)$ for some $i\in \{0, 2, 4, 5\}$. Therefore, 
by Lemma \ref{le: 2-sum-a-b}-(2), $G$ has a signed circuit $6$-cover, a contradiction. This completes the proof of Theorem \ref{TH: main}.
%%%%%%%%%%%%%%%%%%%%%%%%%%%%%%%%%%%%%%%%%%%%%%%%%%%%%%%%%%
\vspace{-0.3cm}

 \end{document}